%% file: paperNEW.tex
\documentclass[11pt]{article}
\usepackage{amssymb,amsthm,amsmath,mathtools}
\usepackage[colorlinks=true, allcolors=blue]{hyperref}
\usepackage{authblk}
\usepackage{bbm}
\usepackage{thmtools}
\usepackage{enumitem}
\usepackage{fullpage}
\usepackage{indentfirst}
\usepackage{comment}
\usepackage{subcaption}
\usepackage{dutchcal}
\usepackage{todonotes}

\usepackage{pgf,tikz}
\usetikzlibrary{patterns}
\usepackage[numeric,initials,nobysame,msc-links,abbrev]{amsrefs}

\newtheorem{theorem}{Theorem}

\newtheorem{lemma}{Lemma}
\newtheorem{proposition}{Proposition}

\newtheorem{question}{Question}
\newtheorem{definition}{Definition}
\newtheorem{remark}{Remark}

\newtheorem{claim}[question]{Claim}

\newcommand{\cA}{\ensuremath{\mathcal A}}
\newcommand{\cB}{\ensuremath{\mathcal B}}
\newcommand{\cC}{\ensuremath{\mathcal C}}
\newcommand{\cD}{\ensuremath{\mathcal D}}
\newcommand{\cE}{\ensuremath{\mathcal E}}
\newcommand{\cF}{\ensuremath{\mathcal F}}

\newcommand{\cH}{\ensuremath{\mathcal H}}

\newcommand{\cL}{\ensuremath{\mathcal L}}

\newcommand{\cR}{\ensuremath{\mathcal R}}
\newcommand{\cS}{\ensuremath{\mathcal S}}

\newcommand{\cV}{\ensuremath{\mathcal V}}

\newcommand{\bbE}{{\ensuremath{\mathbb E}} }

\newcommand{\bbL}{{\ensuremath{\mathbb L}} }

\newcommand{\bbN}{{\ensuremath{\mathbb N}} }

\newcommand{\bbP}{{\ensuremath{\mathbb P}} }
\newcommand{\bbQ}{{\ensuremath{\mathbb Q}} }

\newcommand{\bbZ}{{\ensuremath{\mathbb Z}} }

\newcommand{\eps}{\varepsilon}
\newcommand{\1}{\mathbbm1}

\newcommand{\rs}[1]{{\color{blue}{#1}}}
\newcommand{\rsr}[1]{{\color{red}{#1}}}

\makeatletter
\newcounter{maintheorem}

\newenvironment{subtheorems}
{%
  \refstepcounter{theorem}%
  \setcounter{maintheorem}{\value{theorem}}%
  \setcounter{theorem}{0}%
}
{%
  \setcounter{theorem}{\value{maintheorem}}%
}
\makeatother

\title{Near-critical percolation with sparse reinforcements}
\author[1]{Estevão Borel}
\author[2]{Marcos Sá}
\author[3]{Rémy Sanchis}
\author[4]{Roger W. C. Silva}
\affil[1]{\footnotesize  Departamento de Matemática, Universidade Federal de Minas Gerais,  Brazil \texttt{estevaofb@ufmg.br}}
\affil[2]{\footnotesize  Departamento de Matemática, Universidade Federal de Minas Gerais, Brazil \texttt{marcospy6@ufmg.br}}
\affil[3]{\footnotesize  Departamento de Matemática, Universidade Federal de Minas Gerais. Brazil \texttt{rsanchis@mat.ufmg.br}}
\affil[4]{\footnotesize Departamento de Estatística, Universidade Federal de Minas Gerais, Brazil \texttt{rogerwcs@est.ufmg.br}}

\date{\today}

\begin{document}

\maketitle

\begin{abstract} 


We study the effect of sparse random reinforcement on near-critical Bernoulli site percolation on the first quadrant of the square lattice. The reinforcement is generated by randomly selecting horizontal and vertical lines and increasing the opening probability $p$ only at their intersections, so that the reinforcement is supported on a sparse set of points with long-range dependence. We determine how the geometry of this random set influences the phase transition.

We first investigate whether the reinforcement lowers the critical threshold. When the spacings between consecutive selected lines are geometric, we prove that, for every $p$ above the critical threshold, percolation occurs even if the opening probability $q$ outside the reinforcement is strictly below criticality. In contrast, if the spacings in one direction decay slower than exponentially while those in the other are geometric, then no threshold shift occurs: percolation is impossible for $q$ below the critical threshold, regardless of $p$.

We next study the critical case $q=p_c$. We show that sufficiently high moments of the spacing distributions imply percolation at criticality, whereas the absence of a fractional moment of order $\alpha\in(0,1)$ rules it out. Both regimes remain unchanged after an additional independent thinning of the reinforced vertices. Thus,   reinforcement supported on random intersections may either lower the critical threshold or leave it unchanged while fundamentally altering the nature of the phase transition.

\end{abstract}

\noindent\textbf{MSC2020:} 82B43, 82B27
\\
\textbf{Keywords:} random environment, inhomogeneous percolation, critical percolation

\section{Introduction}
\subsection{Background and motivation}

Percolation theory provides one of the simplest mathematical frameworks for studying the emergence of large-scale connectivity in random media. In the classical Bernoulli bond percolation model on the square lattice $\mathbb{L}^2=(\bbZ^2,\cE(\bbZ^2))$, each edge $e$ is independently declared open or closed according to a Bernoulli random variable $\omega(e)$ with mean $p_e$. The collection of open edges (those such that $\omega(e)=1$) defines a random subgraph of $\mathbb{L}^2$, whose connected components are called open clusters. A central question in the theory is whether this random graph contains an infinite open cluster, in which case we say percolation occurs. 

The percolation model is called homogeneous when $p_e=p$ for every edge $e$, for some fixed $p\in[0,1]$. In this case, it is well known \cite{BH} that there exists a critical parameter $p_c^b(\bbL^2)\in(0,1)$ (here $b$ stands for bond) such that, for $p>p_c^b(\bbL^2)$, an infinite open cluster exists almost surely, whereas for $p< p_c^b(\bbL^2)$ all open clusters are finite almost surely.

One way to introduce inhomogeneities is by modifying the parameters $p_e$ for edges lying at a prescribed collection of horizontal and vertical lines. More precisely, for $\cC, \cR\subset \bbZ$, let
$$
\Lambda := (\mathcal{C} \times \mathbb{Z}) \cup (\mathbb{Z} \times \mathcal{R}).
$$

Given $x\in\cC$, let $\cE_x$ be the set of edges with both endpoints in $\{x\}\times\bbZ$, and write $\cE(\cC)=\cup_{x\in\cC}\cE_x$. Define $\cE(\cR)$ analogously and set 
\begin{equation}\label{edge_lambda}
\cE(\Lambda)=\cE(\cC)\cup \cE(\cR).
\end{equation}

For $p,q\in[0,1]$, write $\bbP^\Lambda_{p,q}$ for the probability measure on $\{0,1\}^{\cE(\bbZ^2)}$ under which the $\omega(e)$'s  are independent Bernoulli random variables with mean $p_e$, where
$$p_e = \begin{cases} p, & \text{if } e\in \cE(\Lambda), \\ q, & \text{if } e\notin \cE(\Lambda). \end{cases}$$

The study of such models is a central theme within percolation theory. For instance, in \cite{Z}, Zhang considers the model with $\cC=\{0\}$, $\cR=\emptyset$, and $q=p_c^b(\bbL^2)$, and shows that no percolation occurs for any $p<1$. On the other extreme, when $\cC=\bbZ$, $\cR=\emptyset$, we know by the work of Kesten (see page 54 in \cite{Kesten}) that percolation occurs whenever $p+q>1$.

 Since the seminal work of McCoy and Wu \cite{MW,MW2}, a recurring theme in the field has been understanding how local modifications of the medium affect the model phase transition. Of particular interest are situations in which a random subset of the lattice receives a preferential treatment, for instance by increasing the probability of an edge being open. 
 From the mathematical viewpoint, they provide a mechanism through which a lower-dimensional set may influence global connectivity properties.

One possibility is to create a disordered environment without any imposed geometric structure, as in the work of Klein \cite{kle94}, who studied the decay of connectivity probabilities in such settings using multiscale techniques.

 A recent example where randomness is introduced through geometric structures is the Brochette model \cite{Brochette}. In this model, entire columns of the two-dimensional lattice are reinforced, and the spacing between these lines follows a geometric distribution.  Specifically, $\cC$ is a random subset of $\bbZ$ formed by points chosen independently with probability $\rho>0$ and $\cR=\emptyset$. The authors show that, for every $\eps>0$, there exists $\delta=\delta(\rho,\eps)>0$ such that the model with $p=p_c^b(\bbL^2)+\eps$ and $q=p_c^b(\bbL^2)-\delta$ percolates for almost every environment $\Lambda$. This framework extends earlier work of Aizenman and Grimmett \cite{AG}, who established a similar result when the spacing of the reinforced lines is uniformly bounded. We refer the reader to \cite{CSS} for an extension of the results of \cite{Brochette} for slabs of $\bbZ^3$.

 In \cite{H}, Hoffman considers a family of models introducing randomness by stretching the lattice. Concretely, the environment is generated by retaining points in $\cC\subset\bbZ$ and $\cR\subset\bbZ$ according to independent Bernoulli variables with mean $\rho>0$.  The author shows a non-trivial phase transition in $p$ for this model when $q=0$. The framework and main result of \cite{H} will be revisited in greater detail in Section \ref{proof_theo_1}, as they play an important role in our arguments (see Lemma \ref{t:Hoffmann} in Section \ref{prop_1}).

 A common characteristic of these models is that, although the quenched measure $\bbP_{p,q}^{\Lambda}$ is typically inhomogeneous, the corresponding annealed measure — obtained by averaging $\bbP_{p,q}^{\Lambda}$ over all realizations of $\Lambda$ — is homogeneous in the sense that every edge has the same marginal probability of being open. Moreover, since the environment law is translation-invariant, the annealed measure is translation-invariant as well. Nevertheless, the resulting model is not independent: edges in the same vertical column remain positively correlated, and these correlations do not decay with the distance between them. Similar random-environment effects have been studied in a variety of settings, including percolation \cite{JJV,JLV,JMP,KSV}, the Ising model \cite{CK,CKP,DHP}, the contact process \cite{A,BDS,K,L,NV}, and the voter model \cite{F}.

 A broader mathematical question underlying these models is to understand how sparse defects or lower-dimensional random structures influence the global behavior of a critical system. Such mechanisms arise naturally in probability, statistical mechanics, and the study of random media, where a distinguished subset of the space may alter large-scale connectivity despite occupying a negligible fraction of the ambient lattice. From this perspective, the problem is not specific to percolation, but concerns the interplay between geometry, disorder, and phase transitions in stochastic lattice models. Understanding which geometric properties of the reinforcing set are relevant—and which are not—is therefore of independent mathematical interest.
 
  The present paper investigates a substantially sparser reinforcement mechanism for site percolation, in which vertices are declared open or closed according to Bernoulli random variables rather than edges. Rather than reinforcing entire rows or columns, we randomly select horizontal and vertical lines and reinforce only their intersections. The resulting environment then consists of isolated points arranged according to a random pattern. Since these reinforced vertices form a locally zero-dimensional set, it is far from obvious whether they can still alter the phase transition of the underlying percolation model. This reduction in the local dimension of the reinforcements brings the model near the limit of what may influence macroscopic connectivity.

 The main goal of this paper is to understand which features of a sparse random reinforcement determine how it affects the critical behavior of planar percolation. More precisely, we study a family of inhomogeneous site percolation models on the first quadrant of the standard nearest-neighbor square lattice $\bbL^2_+ = (\bbZ^2_+, \cE(\bbZ^2_+))$. The inhomogeneity arises from a random set of selected rows and columns, which are selected according to two independent random processes—one governing the rows and the other the columns. Vertices at the intersection of selected columns and rows are called reinforced, and we declare them open with probability $p_c+\eps$ for some $\eps>0$, while all other vertices are declared open with probability $p_c-\delta$ for some $\delta\geq 0$, where $p_c=p_c(\bbL^2_+)$ denotes the critical threshold for site percolation on $\bbL^2_+$. We investigate the phase diagram for this model, depending on the random mechanism generating the spacings between selected columns and rows.

\subsection{The model}

Let $(\xi_k^x)_{k \in \mathbb{Z}_+}$ and $(\xi_k^y)_{k \in \mathbb{Z}_+}$ be  independent sequences of  $i.i.d.$ non-negative random variables. These variables will represent the spacings between the selected columns and rows, respectively. Fixing the origin as a reference point, the set of selected columns $\mathcal{C} \subset \mathbb{Z}$ is given by
\begin{equation}\label{spe_col}\mathcal{C} := \{0\} \cup \left\{ \sum_{k=1}^{m} \xi_k^x : m \ge 1 \right\}.
\end{equation}
Analogously, the set of selected rows $\mathcal{R} \subset \mathbb{Z}$ is
\begin{equation}\label{spe_row}\mathcal{R} := \{0\} \cup \left\{ \sum_{k=1}^{m} \xi_k^y : m \ge 1 \right\}.
\end{equation}

\begin{figure}[ht]
    \centering
    \input randomenv_tikz_2.tex
    \caption{An example of a random environment $\Xi\subset \bbZ^2_+$. Dashed lines are the selected columns and rows, while black dots are reinforced vertices.}
    \label{fig:randomenv}
\end{figure}

We write 
\begin{equation}\label{env_def}
\Xi= \mathcal{C}\times\mathcal{R}
\end{equation}
and call it the \textbf{random environment}. 
Throughout the paper, we denote the laws of $\cC$ and $\cR$ by $\mu_{\xi^x}$ and $\mu_{\xi^y}$, respectively. We also write $$\mu_{\xi}=\mu_{\xi^x}\times \mu_{\xi^y}.$$

We consider a site percolation model on $\mathbb{L}^2_+=(\bbZ^2_+,\cE(\bbZ^2_+))$. A percolation configuration is an element $\omega \in \{0, 1\}^{\mathbb{Z}^2_+}$, where $\omega(v) = 1$ (respectively, $\omega(v) = 0$) means that the vertex $v$ is \textbf{open} (respectively, \textbf{closed}). For a fixed environment $\Xi$ and $p,q\in[0,1]$, we define the product measure $\mathbb{P}_{p,q}^{\Xi}$ under which $(\omega(v))_{v\in\bbZ^2_+}$ are independent Bernoulli random variables with mean
\begin{equation}\label{perc_measure}\mathbb{P}_{p,q}^{\Xi} (\omega(v) = 1)= \begin{cases} p, & \text{if } v\in \Xi, \\ q, & \text{if } v\notin\Xi. \end{cases}
\end{equation}

Let $p_c$ be the critical threshold for homogeneous Bernoulli site percolation on $\bbL^2_+$.  Of special interest to us is the case $p=p_c+\eps$ and $q=p_c-\delta$, for some $\eps>0$ and $\delta\geq0$. In this scenario, a vertex $v\in\Xi$ is called \textbf{reinforced}.


Before we state our results, let us introduce some notation. We write $u \sim v$ if $u$ and $v$ are neighbors, that is, if $uv \in \mathcal{E}(\bbZ^2_+)$. A path in $\bbL^2_+$ is a sequence $x_0, x_1, \ldots, x_n$
of distinct vertices $x_i$ such that $x_{i-1}\sim x_{i}$, $i=1,\dots,n$.  A path is called \textbf{open} if all its vertices are open. We denote by $\{x_0 \longleftrightarrow x_n\}$ the event that $x_0$ is connected to $x_n$ by an open path. Finally, $\{x \longleftrightarrow \infty\}$ denotes the event that there exists an unbounded sequence $(x_n) \subset \mathbb{Z}_2^+$  such that $x$ is connected by an open path to each $x_n$.

\subsection{Results}

The central message of this paper is that the effect of sparse random reinforcement on criticality is determined by the tail behavior of the spacing distributions. Depending on this behavior, the reinforcement mechanism may either lower the critical threshold or leave the threshold unchanged while fundamentally altering the nature of the phase transition.

Given an environment $\Xi$ and $p=p_c+\varepsilon$, let
\begin{equation}\label{crit_q}
q_c(\varepsilon)=\inf\{q:\mathbb{P}_{p_c+\varepsilon,q}^{\Xi}(o\longleftrightarrow\infty)>0\}.
\end{equation}

When reinforcements are supported on entire columns, thus forming local one-dimensional structures, it is known that $q_c(\varepsilon)<p_c$ both when the spacing variables are geometric \cite{Brochette} and when they are heavy-tailed with sufficiently large tail index \cite{CSS}. This suggests that one-dimensional reinforcements robustly enhance connectivity, even under substantially different spacing distributions.

In contrast, our model reinforces only the intersections of the selected rows and columns, producing a random zero-dimensional reinforcement set. We show that this reduction in the dimension of the reinforcement fundamentally changes the picture. In this setting, whether the critical threshold is lowered depends crucially on the tail of the spacing distribution. While geometric spacings still yield $q_c(\varepsilon)<p_c$,  anisotropic environments with slower-than-exponential decay in one direction satisfy $q_c(\varepsilon)=p_c$.

The latter raises a second natural question: what happens at the critical value $q=p_c$ when the threshold is no longer shifted? Our results show that the tail of the spacing distribution continues to play a decisive role. Depending on the moment properties of the spacing distributions, the model may or may not percolate at criticality. Thus, although sufficiently sparse reinforcement may fail to lower the critical threshold, it can nevertheless create a discontinuity in the sense that, for a fixed $\varepsilon>0$,  if $q<q_c(\varepsilon)$, we have 
$$\mathbb{P}_{p_c+\varepsilon,q}^{\Xi}(o\longleftrightarrow\infty)=0,$$ whereas, if $q=q_c(\varepsilon)$, we have 
$$\mathbb{P}_{p_c+\varepsilon,q}^{\Xi}(o\longleftrightarrow\infty)>0.$$


A related question concerns the robustness of this reinforcement mechanism. Even if the reinforced vertices are capable of altering the phase transition, it is not clear whether this effect persists under further random perturbations of the environment. In particular, one may ask whether the reinforcement remains effective if the set of reinforced vertices is itself independently thinned. We return to this question in the final part of the paper; see Section \ref{sec_dil_env}.

\subsubsection{Shift of the critical threshold}

We first consider the case of geometric spacings. In this case, reinforced vertices appear sufficiently frequently to create a genuine shift of the critical threshold. We prove that for every $\varepsilon>0$ there exists $\delta>0$ such that percolation occurs at parameters $p=p_c+\varepsilon$ and $q=p_c-\delta$. Thus, despite the sparsity of the reinforcement set, the model possesses a supercritical phase strictly below the critical threshold of homogeneous percolation.

\begin{theorem}\label{diluted_geo} Let $(\xi_k^x)_{k \in \mathbb{Z}_+}$ and $(\xi_k^y)_{k \in \mathbb{Z}_+}$ be geometric with parameter $\rho\in(0,1]$. Then, for all $\varepsilon\in(0,1-p_c]$, there exists $\delta>0$ such that
$$\mathbb{P}_{p_c+\varepsilon,p_c-\delta}^{\Xi}(o\longleftrightarrow \infty)>0,\quad \mu_\xi \mbox{-almost every}\,\,\Xi.$$
\end{theorem}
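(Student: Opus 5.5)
We prove this by a renormalization (coarse-graining) argument that reduces the statement to Hoffman's result on percolation in a randomly stretched lattice, Lemma \ref{t:Hoffmann}. The key fact is the Aizenman--Grimmett type enhancement principle: a fixed, bounded-density pattern of reinforced sites lowers the critical threshold.

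\emph{Step 1: a local finite-size criterion.} Fix $\varepsilon>0$ and a large integer $N$. Tile $\bbZ^2_+$ into boxes of side $N$. Call a box \emph{good} for the environment if every $N/10$-subsquare contains a reinforced vertex; with geometric spacings, this fails with probability at most $e^{-cN}$ per line direction. On a good box, the reinforced sites have bounded gaps, so the environment dominates a periodic-like reinforcement of positive density. Then, by the differential-inequality (essential enhancement) argument of Aizenman and Grimmett applied to site percolation at $q=p_c$ with reinforced sites at $p_c+\varepsilon$, the crossing probability of $N\times 3N$ rectangles built from good boxes is close to $1$. Concretely, we use RSW at $p_c$ together with the strict increase of crossing probabilities given by Russo's formula. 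Each reinforced site is pivotal with probability comparable to the four-arm probability, and there are of order $N^2$ such sites. This increases the crossing probability enough that, for $N$ large, the finite-size criterion for supercriticality holds, say crossing probability exceeding $1-\eta$. Continuity in $q$ for the fixed finite box then gives $\delta>0$ that preserves the criterion at $q=p_c-\delta$.

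\emph{Step 2: a stretched-lattice renormalization.} Goodness of boxes does not factor over boxes, because it depends on rows and columns. So we declare a column-block bad if the column spacings inside it exceed $N/10$, and define row-blocks similarly. These block indicators are i.i.d.\ with a small probability $\rho'=O(Ne^{-cN})$ of being bad, independently across the two directions. This is exactly Hoffman's setting: good-good block-pairs carry renormalized sites that are open with probability close to $1$ (by Step 1, with the usual overlapping-crossing construction, which is $1$-dependent and can be dominated by a product measure via Liggett--Schonmann--Stacey). Bad blocks must be crossed by paths at $q=p_c-\delta$ through a strip of bounded width. We therefore group bad column-blocks into runs and treat a run of length $\ell$ as a stretched column. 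Crossing it costs a probability decaying exponentially in $\ell$ (subcritical-type crossing), whose tail is geometric with a rate that tends to $\infty$ as $N\to\infty$. Lemma \ref{t:Hoffmann} yields percolation of the stretched lattice whenever the open probability of good sites is close enough to $1$ and the stretch distribution has sufficiently light tail. This gives an infinite open path for $\mu_\xi$-a.e.\ $\Xi$ with positive $\mathbb{P}^{\Xi}$-probability from some site. Finite energy, valid since $q>0$ and $\Xi$ is fixed, moves it to $o$.

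\emph{Main obstacle.} Step 1 is the hardest part: proving quantitatively that zero-dimensional reinforcements at bounded spacing strictly shift $p_c$ for site percolation uniformly over all good patterns. We would first attempt the Aizenman--Grimmett comparison of $\partial/\partial p$ and $\partial/\partial q$ of crossing probabilities via a local modification argument. A closed non-reinforced pivotal site has a reinforced site within distance $N/10$; the difficulty is that this distance grows with $N$. So we would instead use quasi-multiplicativity of arm events to compare pivotality of sites at distance at most $N/10$, losing only a constant factor. A secondary subtlety is matching the tail of bad runs to the exact hypotheses of Lemma \ref{t:Hoffmann}.
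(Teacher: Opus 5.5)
Your architecture is the paper's: good and bad blocks, good blocks occupied with probability near $1$ and bad ones with probability bounded below, a reduction to Hoffmann's stretched lattice, then $\delta$ by continuity at a fixed scale. However, Step~1 fails with your definition of a good box, and it is the heart of the argument. Requiring only that every $N/10$-subsquare contain a reinforced vertex allows the selected columns and rows in the box to be exactly $N/10$ apart, leaving about $10^2$ reinforced sites. Such a box has neither bounded gaps nor positive density, so your count of ``order $N^2$'' reinforced sites is unjustified. For such a pattern, Russo's formula bounds the change of any crossing probability of the box, relative to critical percolation, by a constant times $\varepsilon$ times the largest single-site pivotal probability. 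Arm estimates make this tend to $0$ as $N\to\infty$. The crossing probabilities therefore stay near their critical values, which RSW keeps bounded away from $1$, so no estimate uniform over your good patterns can hold. Quasi-multiplicativity does not rescue this. Even if each site had pivotal probability comparable to that of a reinforced site within distance $N/10$, every reinforced site would have to absorb order $N^2$ non-reinforced ones. A comparison $\partial_p\mathbb{P}\ge\alpha\,\partial_q\mathbb{P}$ would then hold only with $\alpha$ of order $N^{-2}$, which is useless at scale $N$. Nor can you simply demand bounded gaps: for geometric spacings the largest gap among the $\asymp N$ lines meeting the box is of order $\log N$, so $K$-spacing with fixed $K$ fails with probability tending to $1$.

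The missing idea is to work at logarithmic spacing, with a comparison whose loss is only polynomial in the spacing. In the paper a box is good when the surrounding $3L$-box is doubly $(c\log L)$-spaced, which has probability tending to $1$ for geometric spacings and $c$ large. Proposition~3 of \cite{Brochette} then applies; compare Lemma~\ref{lem:comparison_homogeneous}, which gives $\mathbb{P}^{\Xi}_{p_c+\varepsilon,p_c}(\mathcal{H}_L)\ge\mathbb{P}_{p_c+r^{-c_2}}(\mathcal{H}_L)$ on $r$-spaced boxes. Combined with the polynomial bound $C\eta^{-c_1}$ on the correlation length at $p_c+\eta$, this gives an effective correlation length $(\log L)^{c_1c_2}\ll L$. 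Hence $B_{3L}\setminus B_L$ contains an open circuit with probability close to $1$. A naive Aizenman--Grimmett surgery at range $r$ costs $e^{-Cr^2}$, which is superpolynomially small at $r=c\log L$, so the RSW-based polynomial-loss version is genuinely needed.

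Your Step~2 is essentially the paper's Proposition~\ref{aux_res}. The matching to Lemma~\ref{t:Hoffmann} that you call secondary is done there by the $M$-model:
\begin{enumerate}
\item Choose $M$ with $p_1^M<\sigma$, so crossing a bad site is at least as likely as crossing $M$ consecutive sites open with probability $p_1$.
\item Then $M(\xi-1)+1$ is dominated by a $\mathrm{Geo}(\rho^*)$ variable once the bad-line probability is below $(1-\rho^*)^M$.
\item Grimmett--Stacey passes from bond to site percolation.
\end{enumerate}
Note that the per-block lower bound $\sigma$ comes from RSW at $p_c$, uniformly in the scale, with $\delta$ fixed afterwards. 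It does not come from any subcritical decay.
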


Theorem \ref{diluted_geo} establishes that $q_c(\eps)<p_c$. We also show in Section \ref{sec_dil_env} that this phenomenon is robust under an additional independent dilution of the reinforcement set.



\subsubsection{Critical behavior without threshold shift}

Our next result shows that the threshold shift established in Theorem \ref{diluted_geo} is highly sensitive to the tail distribution of the reinforcement set. If the tail in one direction decays slower than exponentially, while the one in the other direction remains geometric, then arbitrarily large gaps occur frequently enough to destroy connectivity below $p_c$. 
This stands in contrast to the model studied in \cite{CSS}, where it is shown that the critical threshold remains shifted even when the spacings between consecutive selected columns have heavy-tailed distributions, provided all rows are deterministically present.

We say that the tail probabilities of a random variable $X$ decay slower than exponentially if 
\begin{equation}\label{heavy-tailed-2}
\limsup_{t\rightarrow \infty}\vartheta^{-t}\bbP(X>t)=\infty, \mbox{  for all $\vartheta\in(0,1)$.}
\end{equation}  
\begin{subtheorems}
\begin{theorem}\label{nopercolation} Let $(\xi_k^x)_{k \in \mathbb{Z}_+}$ be geometric with parameter $\rho\in[0,1]$, and $(\xi_k^y)_{k \in \mathbb{Z}_+}$ be such that \eqref{heavy-tailed-2} holds. Then, for every $\varepsilon\in(0,1-p_c]$ and $\delta\in(0,p_c]$, 
\begin{equation*}\bbP^{\Xi}_{p_c+\varepsilon,p_c-\delta}(o\longleftrightarrow\infty)=0, \quad \mu_{\xi}\mbox{-almost every }\Xi.
\end{equation*}
\end{theorem}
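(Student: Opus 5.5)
The plan is to build, for $\mu_\xi$-almost every $\Xi$ and then $\bbP^{\Xi}_{p_c+\varepsilon,p_c-\delta}$-almost surely, a closed $*$-path in $\bbZ^2_+$ that separates the origin from infinity. The separating set will consist of two pieces: a closed $*$-path crossing a tall horizontal strip free of reinforcements, and a closed "vertical wall" that cuts off the region below that strip. Write $R_m=\sum_{j\le m}\xi^y_j$ for the selected rows. Fix $m$ and let $L=\xi^y_{m+1}$ be the next spacing. The strip $S_m=\bbZ_+\times(R_m,R_{m+1})$ contains no reinforced vertex, so inside it the configuration is i.i.d. Bernoulli$(p_c-\delta)$, which is subcritical. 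By exponential decay for subcritical site percolation (Menshikov / Aizenman–Barsky, applied in the quadrant), an open vertical crossing of $[0,W]\times(R_m,R_{m+1})$ has probability at most $W e^{-c(\delta)L}$. Hence, with probability at least $1-We^{-c(\delta)L}$, there is a closed horizontal $*$-crossing of this rectangle, running from the $y$-axis to the column $x=W$.

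Next, the cluster of $o$ must be prevented from escaping to the right below the strip, that is, inside $[0,\infty)\times[0,R_m]$. I will look for some column $x\in[W/2,W]$ on which a closed $*$-path joins the $x$-axis to the strip $S_m$. The crude choice is a fully closed column segment $\{x\}\times[0,R_m+1]$. Its probability is at least $(1-p_c-\varepsilon)^{\#\{\text{selected rows}\le R_m\}}(1-p_c+\delta)^{R_m}\ge \eta^{m}\,\eta'^{R_m}$, and it is independent across columns. The refined choice, which I would actually use, is to cross each gap $(R_j,R_{j+1})$, $j<m$, by a closed $*$-path inside a subcritical region. Doing this at bounded cost per gap should make the per-column cost $\eta^{Cm}$ rather than exponential in $R_m$. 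Taking $W/2$ disjoint attempts, the wall fails with probability at most $(1-\eta^{Cm})^{W/2}$.

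Both estimates are small simultaneously if I choose $W=e^{c(\delta)L/2}$ and $L=\xi^y_{m+1}\ge C' m$ for a large constant $C'$ depending on $\varepsilon,\delta,\rho$. In that case $We^{-c(\delta)L}=e^{-c(\delta)L/2}$ and $(1-\eta^{Cm})^{W/2}\le \exp(-\tfrac12 e^{c(\delta)C'm/2}\eta^{Cm})$. The geometric law of the columns enters only through uniform control of the number of reinforced sites per column segment, which is needed because in the quadrant everything is measured relative to a fixed origin. A union over infinitely many good indices $m$, combined with Borel–Cantelli, then yields a.s. blocking of $o$. Since each good $m$ gives a uniformly positive blocking probability, and the events for well-separated $m$ are nearly independent, it even suffices to have infinitely many good $m$.

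The main obstacle is the environment step: showing that \eqref{heavy-tailed-2} forces, $\mu_{\xi^y}$-a.s., infinitely many $m$ with $\xi^y_{m+1}\ge C'm$, or with $\xi^y_{m+1}\ge C'\,(\text{cost of the wall below})$. The limsup condition only gives $\bbP(\xi>t)\ge\vartheta^t$ along a subsequence $t_j$, so $\sum_m\bbP(\xi>C'm)$ need not obviously diverge. What I would try first is to exploit that $\vartheta$ may be taken arbitrarily close to $1$ while $c(\delta)$ and $\eta$ are fixed. With this freedom I would look for a gap of length $\ge t_j$ among the first $N_j\approx\vartheta^{-t_j}$ indices, and make the wall cost depend on the number of rows (about $N_j$) only logarithmically. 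To achieve this, I would route the wall through the large gaps rather than column-by-column, and use second-moment or independent blocks along the sequence $t_j$. If this fails, a fallback is a renormalization in which the wall itself is built only inside the subcritical strips.
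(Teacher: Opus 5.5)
There is a genuine gap, at the point you flagged yourself. With the wall built column by column, the plan cannot be completed.

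Your wall is either a fully closed segment $\{x\}\times[0,R_m+1]$ or a closed $*$-path crossing each row gap. In both cases a single attempt succeeds with probability exponentially small in $m$ (or in $R_m$). So you are forced to require $\xi^y_{m+1}\ge C'm$ for infinitely many $m$, and this is false in general under \eqref{heavy-tailed-2}. For instance, $\bbP(\xi^y_1>t)=e^{-\sqrt t}$ satisfies \eqref{heavy-tailed-2}, yet $\sum_m \bbP(\xi^y_{m+1}\ge C'm)=\sum_m e^{-\sqrt{C'm}}<\infty$. By Borel--Cantelli such $m$ therefore occur only finitely often almost surely. The remedies you list (routing through large gaps, second moments, a fallback renormalization) are too vague to replace this step.

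There are two smaller problems as well. First, your factor involving $1-p_c-\varepsilon$ vanishes when $\varepsilon=1-p_c$, which the statement allows. Second, a wall that only reaches the bottom row of $S_m$ need not meet the closed horizontal crossing, which may run higher in the strip. The cluster of $o$ can then slip between the two pieces and get past column $W$.

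The missing idea is to use the geometric law of $\xi^x$ to produce a \emph{reinforcement-free vertical strip}, not to count reinforced sites on a column. Every column strictly between two consecutive selected columns contains no reinforced vertex, whatever $\cR$ is. By the longest-run estimate, with probability tending to one there is such a column gap of width $\ge c\log T$ inside $[0,T]$. An open horizontal crossing of this gap below height $H$ then has probability of order at most $H e^{-m_\delta c\log T}$, which is small once $H\le T^\beta$ with $\beta$ small.

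The wall therefore costs only a logarithmic condition. From the rows you need just one gap of length $\ge (2/m_\delta)\log T$ among the spacings below height $T^\beta$. Condition \eqref{heavy-tailed-2} provides this with probability tending to one along a subsequence of $T$.

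The paper also avoids both gluing a separating path and a quenched Borel--Cantelli argument. In the quadrant, $\{o\longleftrightarrow\infty\}$ forces an exit of $[0,T]\times[0,T^\beta]$ through its right or top side. Each such exit forces an open crossing of one of the two reinforcement-free strips. The annealed probability of either crossing tends to zero along the subsequence, and annealed zero gives quenched zero for $\mu_\xi$-almost every $\Xi$. So you need neither almost surely infinitely many good scales nor independence between scales.
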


The proof of Theorem~\ref{nopercolation}, presented in Section~\ref{proof_theo_3}, extends beyond the setting considered above and also yields the following two variants.

\begin{theorem}\label{nopercolation_2}
Assume $\xi_1^x\stackrel{d}{=}\xi_1^y$. If $(\xi_k^x)_{k \in \mathbb{Z}_+}$ satisfies \eqref{heavy-tailed-2}, then, for every $\varepsilon\in(0,1-p_c]$ and $\delta\in(0,p_c]$, 
\begin{equation*}\bbP^{\Xi}_{p_c+\varepsilon,p_c-\delta}(o\longleftrightarrow\infty)=0, \quad \mu_{\xi}\mbox{-almost every }\Xi.
\end{equation*}
\end{theorem}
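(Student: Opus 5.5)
The plan is to block the cluster of the origin with closed dual circuits that live inside very wide reinforcement-free strips. Because $\xi^x\stackrel{d}{=}\xi^y$ and both satisfy \eqref{heavy-tailed-2}, huge gaps occur in both directions. So, unlike Theorem~\ref{nopercolation}, where one direction was geometric, we can look for regions that are empty of reinforcement in \emph{both} coordinates. The key input is exponential decay for subcritical site percolation: at $q=p_c-\delta$ there is $c=c(\delta)>0$ with $\bbP_{q}(\text{a crossing of an } n\times m \text{ rectangle in the long direction})\le m e^{-cn}$. Write $\vartheta_\delta=e^{-c/2}$ and use \eqref{heavy-tailed-2} with this $\vartheta_\delta$.

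\textbf{Step 1 (large gaps in the environment).} Apply \eqref{heavy-tailed-2} to the common law of $\xi^x$ and $\xi^y$. This gives a sequence $t_j\to\infty$ with $\bbP(\xi_1>t_j)\ge K_j\vartheta_\delta^{t_j}$, where $K_j\to\infty$. Among the first $N$ spacings, the number exceeding $t_j$ is binomial. By Borel--Cantelli along a sparse subsequence, $\mu_\xi$-a.s.\ we find infinitely many scales with the following properties. There is a column gap $(a_j,a_j+t_j)$ free of $\cC$, located at horizontal distance $L_j$ from the origin. There is also a row gap $(b_j,b_j+t_j)$ free of $\cR$, at vertical distance $L_j'$. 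In both cases $L_j,L_j'$ are at most of order $\vartheta_\delta^{-t_j}/K_j$, which is $o(e^{c t_j/2})$, up to the renewal rate $1/\bbE\xi$. If $\bbE\xi=\infty$ the gaps only come more easily.

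\textbf{Step 2 (blocking).} Inside the vertical strip $S_j=(a_j,a_j+t_j)\times[0,\infty)$ there are no reinforced sites, so the configuration there is i.i.d.\ with parameter $p_c-\delta$; the same holds in the horizontal strip $H_j=[0,\infty)\times(b_j,b_j+t_j)$. To escape to infinity, the cluster of $o$ must cross $S_j$ horizontally at some height, or cross $H_j$ vertically at some abscissa. The crossings of $S_j$ at heights below $b_j$ are crossings of a $t_j\times b_j$ rectangle; by the subcritical estimate their probability is at most $b_j e^{-ct_j}$. The same bound, with $a_j$ in place of $b_j$, handles crossings of $H_j$ at abscissae below $a_j$. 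A path from $o$ that avoids both of these crossings stays in the box $[0,a_j]\times[0,b_j]$, or else it reaches the region beyond both strips. Reaching that region requires passing the corner square $S_j\cap H_j$ or one of the two strips at larger coordinates. Since $\partial([0,a_j+t_j]\times[0,b_j+t_j])$ is surrounded by these strips, I will instead form a closed dual circuit. Use a vertical closed dual crossing of $S_j\cap([0,b_j+t_j])$ together with a horizontal closed dual crossing of $H_j\cap([0,a_j+t_j])$; these intersect in the corner square. By the dual exponential estimate and the choice of $L_j$, both exist with probability at least $1-(a_j+b_j+2t_j)e^{-ct_j}\ge 1-o(1)$. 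Since $a_j,b_j\le L_j$ and $L_j e^{-ct_j}\to0$, this holds along the scales $j$.

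\textbf{Step 3 (conclusion).} The events from Step 2 occur with probability tending to $1$ and force $\{o\not\leftrightarrow\infty\}$. Hence $\bbP^\Xi_{p_c+\eps,p_c-\delta}(o\leftrightarrow\infty)\le\inf_j o(1)=0$.

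\textbf{Main obstacle.} The hard part is Step 1: we need the $\cC$-gap and the $\cR$-gap at the same scale $t_j$, each located within distance $o(e^{ct_j})$ of the origin, almost surely. The two sequences are independent, so I would run Borel--Cantelli separately in each direction along a common subsequence $t_j$ chosen sparse enough, with summable failure probabilities $\exp(-K_j^{1/2})$. I would take $L_j=\vartheta_\delta^{-t_j}K_j^{-1/2}$, so that the expected number of big gaps among the first $L_j$ spacings is $\ge K_j^{1/2}\to\infty$. Positions are then controlled by the law of large numbers for the partial sums, or by a truncation when $\bbE\xi=\infty$.
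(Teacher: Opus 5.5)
Your proof is correct and is essentially the paper's argument. Both use reinforcement-free strips of width $t$ lying within distance $o(e^{ct})$ of the origin, a column gap and a row gap at a common sequence of scales (which is exactly where $\xi_1^x\stackrel{d}{=}\xi_1^y$ enters), together with subcritical exponential decay across each strip.

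The paper packages this annealed. It bounds the probability that $o$ reaches the top or right side of $[0,T]^2$, and uses the diagonal symmetry so that only the horizontal strip needs treating. You instead work quenched via Borel--Cantelli and close off the cluster with an explicit closed $*$-barrier.

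One simplification for your Step 1: the start of the first gap exceeding $t_j$ is at most $t_j$ times its index, since all earlier spacings are at most $t_j$. So no law of large numbers or truncation is needed, whatever the value of $\mathbb{E}\xi$.
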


The assumption that $\xi_1^x\stackrel{d}{=}\xi_1^y$ in Theorem~\ref{nopercolation_2} can be removed at the expense of strengthening condition~\eqref{heavy-tailed-2}. More precisely,  we assume some regularity on the tail distributions of $(\xi^x_k)_{k\in\bbZ_+}$ and $(\xi^y_k)_{k\in\bbZ_+}$ by requiring that   
\addtocounter{equation}{+0}
\begin{equation}\label{heavy_tailed_3}
\tag{\theequation$'$}
\lim_{t\rightarrow \infty}\vartheta^{-t}\bbP(\xi^x_1>t)=\lim_{t\rightarrow \infty}\vartheta^{-t}\bbP(\xi^y_1>t)=\infty, \mbox{  for all $\vartheta\in(0,1)$.}
\end{equation}
\addtocounter{equation}{0}

\begin{theorem}\label{nopercolation_3} Let $(\xi_k^x)_{k \in \mathbb{Z}_+}$ and $(\xi_k^y)_{k \in \mathbb{Z}_+}$ be such that \eqref{heavy_tailed_3} holds. Then, for every $\varepsilon\in(0,1-p_c]$ and $\delta\in(0,p_c]$, 
\begin{equation*}\bbP^{\Xi}_{p_c+\varepsilon,p_c-\delta}(o\longleftrightarrow\infty)=0, \quad \mu_{\xi}\mbox{-almost every }\Xi.
\end{equation*}
\end{theorem}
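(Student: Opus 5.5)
We plan to block percolation with closed dual circuits (in site percolation on $\bbL^2_+$, $*$-connected closed paths) surrounding the origin. These circuits will live inside large reinforcement-free regions of the environment. The key input is exponential decay for subcritical site percolation: at $q=p_c-\delta$ there is $c=c(\delta)>0$ such that the probability of an open crossing of length $n$ across a strip of width $W$ is at most $n\,e^{-cW}$. The same holds for closed $*$-crossings in a supercritical region, which is the dual statement.

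\textbf{Step 1: find wide empty strips in both directions.} Assumption \eqref{heavy_tailed_3} says the tails of $\xi^x_1$ and $\xi^y_1$ are eventually larger than $\vartheta^t$ for every $\vartheta<1$, taken as a genuine limit rather than a limsup. We use it as follows.
\begin{itemize}[label={}, leftmargin=0pt]
\item Fix a large scale $L$ and an interval $[0,L]$.
\item Among the roughly $L/\bbE\xi$ gaps between selected columns lying in this interval, the probability that none exceeds $W$ is at most $(1-\bbP(\xi^x_1>W))^{cL}$.
\item Choose $W=W(L)$ with $\bbP(\xi^x_1>W)\ge L^{-1/2}$. Because the tail decays slower than any exponential, such a $W$ can be taken with $W(L)/\log L\to\infty$.
\end{itemize}
Borel--Cantelli along $L_j=2^j$ then gives, $\mu_\xi$-a.s. and for all large $j$, a column gap of width at least $W_j$ inside $[L_j,2L_j]$ with $W_j\gg \log L_j$. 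The same argument applies to rows. It is here that the genuine limit in \eqref{heavy_tailed_3}, rather than a limsup, is needed: the tail bound must hold at every scale $L_j$ simultaneously, for both coordinates, with no coupling between $\xi^x$ and $\xi^y$. In Theorem \ref{nopercolation_2} that coupling came from equality in distribution.

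\textbf{Step 2: the union of the two strips is reinforcement-free.} Let $S^x_j$ be the vertical strip of width $W_j$ containing no selected column, and $S^y_j$ the horizontal strip of width $W_j$ containing no selected row. Every vertex of $S^x_j\cup S^y_j$ is unreinforced, since a reinforced vertex needs both a selected column and a selected row. The union therefore contains an $L$-shaped region in which all sites are open with probability $p_c-\delta$ independently. This region separates the origin from infinity in the quadrant: the horizontal strip blocks the part $\{x<\text{(vertical strip)}\}$ from going up, and the vertical strip blocks it from going right. Any open path from $o$ to infinity must therefore cross one of the two strips in its short direction, within a box of side $O(L_j)$ until the strips meet, and otherwise it goes around. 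To handle the far parts we truncate: exiting the box $[0,4L_j]^2$ requires a crossing of $S^x_j\cap[0,4L_j]^2$ or of $S^y_j\cap[0,4L_j]^2$.

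\textbf{Step 3: bound the crossing probability.} Each crossing is an open crossing of a width-$W_j$, length-$O(L_j)$ rectangle in the subcritical homogeneous measure. By exponential decay its probability is at most $C L_j e^{-cW_j}$. Since $W_j/\log L_j\to\infty$, this bound tends to $0$. As $\{o\leftrightarrow\infty\}$ is contained in the crossing event for every large $j$, we conclude $\bbP^\Xi_{p_c+\eps,p_c-\delta}(o\leftrightarrow\infty)=0$.

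\textbf{Main obstacle.} The hard step is Step 1: converting \eqref{heavy_tailed_3} into gaps of width $\gg\log L$ at every dyadic scale, almost surely, in both directions at once. The plan is to make the choice of $W(L)$ explicit. Given any $K$, take $\vartheta=e^{-1/K}$; eventually $\bbP(\xi>t)\ge\vartheta^{t}$, so $W=K\log L/2$ works. A diagonal argument in $K$ then gives $W_j/\log L_j\to\infty$.

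Two smaller points need checking. First, the renewal count of gaps in $[L,2L]$ should be controlled by the law of large numbers when $\bbE\xi<\infty$; when $\bbE\xi=\infty$ a huge gap is even easier to find. Second, a strip lying partially outside $[L,2L]$ is harmless, since any gap straddling it is at least as wide.
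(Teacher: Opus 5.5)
Your proof is correct and uses the same mechanism as the paper's proof. Both rely on reinforcement-free strips of width $\gg\log(\text{scale})$ in both coordinate directions at a common scale, which is exactly where the limit form of \eqref{heavy_tailed_3} enters, combined with subcritical exponential decay for crossing those strips. The only real difference is that the paper works annealed on $[0,T]^2$ and lets $T\to\infty$, whereas you fix the environment by Borel--Cantelli along $L_j=2^j$. For the gap count, the cleanest bound is the paper's: if every gap before $2L$ is at most $W$, then at least $2L/W-1$ consecutive i.i.d.\ gaps are at most $W$. This gives the summable bound $\exp(-L^{1/2}/W)$ directly and handles $\mathbb{E}\xi=\infty$ without the law of large numbers; a gap anywhere in $[0,2L_j]$ already suffices for your $L$-shaped band.
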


\end{subtheorems}

At first sight, Theorems~\ref{nopercolation_2} and \ref{nopercolation_3} may appear to follow from Theorem~\ref{nopercolation}. However, it is unclear how to monotonically couple different spacing distributions and, even if this were possible, how such monotonicity in the environment would translate into monotonicity of the percolation measure.

Theorems~\ref{nopercolation}--\ref{nopercolation_3} show that, under a broad class of spacing distributions, sparse reinforcement does not lower the critical threshold, so that $q_c(\eps)=p_c$. This naturally raises the next question: can reinforcement still alter the phase transition at the critical point itself? We next identify moment conditions on the spacing random variables that determine whether percolation occurs at the critical value $q=p_c$. Roughly speaking, if the spacing distributions possess sufficiently high moments, then reinforcement, although too sparse to lower the critical threshold, is still capable of generating an infinite cluster at criticality. Conversely, if some fractional moment of order $\alpha\in(0,1)$ is infinite, then percolation does not occur at $q=p_c$. Consequently, under sufficiently high moment assumptions, the phase transition is discontinuous, in the sense that an infinite cluster already exists at the critical point. This behavior differs fundamentally from that of homogeneous planar percolation.

\begin{theorem}\label{diluted} There exists $\alpha_0>1$ such that, if $\bbE\left[(\xi_1^x)^\alpha\right]<\infty$ and $\bbE\left[(\xi_1^y)^\alpha\right]<\infty$ for some $\alpha>\alpha_0$, then for all $\varepsilon\in(0,1-p_c]$, we have
$$\mathbb{P}_{p_c+\varepsilon,p_c}^{\Xi} ( o\longleftrightarrow \infty) > 0, \quad \mu_{\xi}  \mbox{-almost every}\,\,\Xi.$$
\end{theorem}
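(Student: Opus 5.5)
We will prove Theorem~\ref{diluted} by a multiscale renormalization that compares the reinforced model at $q=p_c$ with Hoffman's stretched-lattice model (Lemma~\ref{t:Hoffmann}). The point to exploit is that critical planar percolation crosses boxes of bounded aspect ratio with probability bounded away from $0$ and $1$ (RSW), and that a single reinforced vertex can act as a \emph{pivotal helper}. First we show that at $q=p_c$, raising one point to $p_c+\eps$ increases crossing probabilities in a quantitative way. Then we show that a box of side $n$ typically contains of order $n^{2}/(\bbE\xi^x\,\bbE\xi^y)$ reinforced sites arranged in a grid. By a Russo-type argument with arm exponents, the crossing probability of such a box rises above $1-\eta$ once $n$ is large.

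\textbf{Step 1 (single scale).} Let $B_n$ be a box of side $n$ that meets a grid of selected lines whose spacings are at most $Kn^{\gamma}$. We want a lower bound on the crossing probability of $B_n$ in the reinforced measure. Use Russo's formula along the interpolation $q\mapsto p$ applied only at $\Xi\cap B_n$. The derivative is the expected number of pivotal reinforced sites. By the four-arm exponent $\alpha_4$ (we only need $\alpha_4<2$, which holds for site percolation on $\mathbb{L}^2$ via the known bound $\alpha_4\le 2-c$ or by quasi-multiplicativity), the expected number of pivotals among $\asymp n^{2}$ reinforced sites of density bounded below is at least $c\,n^{2-\alpha_4}$, which grows polynomially. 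This gives $\mathbb{P}(\text{crossing of }B_n)\ge 1-Cn^{-\kappa}$ for some $\kappa>0$. Here we use the Kesten-style scaling relation between the derivative and the near-critical window: near-critical percolation at effective parameter change $\eps$ on a set of density $d$ reaches the supercritical regime at scale $L(\eps d)$. This is the main obstacle. A rigorous version must control the pivotal probability uniformly over the inhomogeneous measure, for example via Kesten's stability of arm events in near-critical windows, since the perturbation stays within the window up to the crossing scale. The grid arrangement of $\Xi$, rather than independent sites, must also be handled. We would try to work directly with the deterministic grid given good spacings and use Garban--Pete--Schramm near-critical stability.

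\textbf{Step 2 (good environment blocks).} Call a scale-$n$ block \emph{good} if every gap between consecutive selected columns and rows meeting it is at most $n^{\gamma}$ with $\gamma<1$ small. Then the grid has density comparable to its mean in the block, by the law of large numbers with polynomial error from the $\alpha$-th moment. By Markov's inequality, a given gap exceeds $n^\gamma$ with probability at most $\bbE[\xi^\alpha]n^{-\alpha\gamma}$. Since there are $O(n)$ gaps, a block is bad with probability $O(n^{1-\alpha\gamma})$. Because badness depends only on the rows and columns of the block, the good/bad status forms exactly the stretched environment of Lemma~\ref{t:Hoffmann} at the coarse scale: a column of blocks is bad or good together.

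\textbf{Step 3 (renormalization and conclusion).} Build a coarse lattice of scale-$n$ blocks. Declare a coarse site open if its block is good and the crossing events in Step 1 in both directions occur, with overlaps glued by FKG. Given a good environment, the crossing failure probability is at most $Cn^{-\kappa}$, independent across disjoint blocks. Bad coarse columns and rows are exactly the situation in Hoffman's model, with retention probability $1-O(n^{1-\alpha\gamma})$ close to $1$. Taking $\alpha>\alpha_0:=$ large enough that $1-\alpha\gamma<0$ (this fixes $\alpha_0$, and exceeding $1$ is needed), and $n$ large, Lemma~\ref{t:Hoffmann} yields an infinite coarse open path for $\mu_\xi$-a.e. $\Xi$ with positive probability. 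Bad blocks can still be crossed at critical cost with probability bounded below by RSW, which fits Hoffman's $p$ inside the dense-line framework. Finally, we connect $o$ to this path using finite energy, since all parameters lie in $(0,1)$. By ergodicity of the environment the statement holds almost surely.
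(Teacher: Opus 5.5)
\textbf{The gap is in Steps 2--3, the reduction to Lemma~\ref{t:Hoffmann}.} That lemma is about lines retained independently, i.e.\ i.i.d.\ geometric spacings. Your coarse good/bad line process cannot be compared to such an environment.

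A single spacing $\xi_1^x>(m+1)n$ makes the coarse columns $[in,(i+1)n]$, $1\le i\le m$, all bad. So a run of $m$ bad coarse columns has probability at least $\mathbb{P}(\xi_1^x>(m+1)n)$. Under the hypotheses of Theorem~\ref{diluted} this may decay only polynomially in $m$, for example when $\mathbb{P}(\xi_1^x>t)\asymp t^{-\alpha-1}$. An i.i.d.\ Bernoulli$(\rho')$ selection gives such runs probability $(1-\rho')^m$. Hence, for no $\rho'>0$ do your good-column indicators stochastically dominate i.i.d.\ Bernoulli$(\rho')$ variables. The fact that a single block is good with probability $1-O(n^{1-\alpha\gamma})$ does not help.

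A better coupling will not fix this. As the paper notes, the heavy-tailed analogue of Hoffmann's model does not percolate for any $p$ (Theorem~1.2 of \cite{HSST2}).

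The structural identification is also wrong. In Hoffmann's model, paths run along entire selected lines. At your coarse level, a block in a good column but a bad row is bad, so such paths must traverse bad blocks. In Section~\ref{proof_theo_1} this was handled by the $M$-model and the bound $p_1^M<q$, and both again rely on geometric, exponentially rare long runs.

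More fundamentally, any fixed-scale coarse graining discards scale invariance. Critical percolation crosses a reinforcement-free strip of width $mn$ at constant cost, by RSW at scale $mn$. Your coarse model certifies a passage through $m$ consecutive bad blocks only at cost about $\sigma^m$ per route. Here $\sigma$ is an RSW constant you have no way to push above a coarse critical threshold. Long bad runs therefore behave like the subcritical strips in the proof of Theorem~\ref{nopercolation}.

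\textbf{What is missing is letting the tolerated gap grow with the scale, and requiring a good environment only along one nested sequence of boxes.} The paper proceeds as follows.
\begin{enumerate}
\item Take $L_k=2^kL_0$ and the event $\Psi_\beta$ that every $[0,2L_k]^2$ is doubly $L_k^\beta$-spaced. For $\beta>1/\alpha$ the terms $L_k^{1-\alpha\beta}$ are summable, so a union bound gives $\mu_\xi(\Psi_\beta)\ge 1/2$.
\item On $\Psi_\beta$, Lemma~\ref{lem:comparison_homogeneous} compares crossings at scale $L_k$ with homogeneous percolation at $p_c+L_k^{-\beta c_2}$. Its correlation length is at most $C_\gamma L_k^{\beta c_1c_2}\ll L_k$ when $\beta<1/(c_1c_2)$; this is what fixes $\alpha_0$.
\item Lemma~\ref{lem:accfr} then makes the failure probabilities of $\mathcal{H}_{L_k}$ and $\mathcal{V}_{L_k}$ super-exponentially small.
\item These crossings are glued by FKG into an infinite path, and an ergodic argument passes from $\Psi_\beta$ to $\mu_\xi$-a.e.\ $\Xi$.
\end{enumerate}

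Your Step~1 is a heuristic stand-in for Lemma~\ref{lem:comparison_homogeneous} together with \eqref{crit_exp}. Note that your good-block condition only gives reinforcement density of order $n^{-2\gamma}$, not density bounded below. It is the requirement that the corresponding near-critical length be $\ll n$ that constrains $\gamma$, and hence $\alpha_0$; the condition $1-\alpha\gamma<0$ alone does not determine it.
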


The proof of Theorem~\ref{diluted} requires a different approach from that used for Theorem~\ref{diluted_geo}. Indeed, the latter relies on a comparison with a modified version of Hoffmann's model \cite{H}, whereas the corresponding model with heavy-tailed spacings does not percolate for any value of $p$ (see Theorem~1.2 in \cite{HSST2}).  Consequently, the comparison argument underlying the proof of Theorem~\ref{diluted_geo} is no longer available in the present setting. Instead, to prove Theorem \ref{diluted}, we develop a scale-by-scale renormalization scheme that avoids a full multiscale dynamical analysis.

The next theorem shows that the conclusion of Theorem~\ref{diluted} cannot be extended to arbitrary spacing distributions. Indeed, if the spacing distributions fail to possess a fractional moment of order $\alpha\in(0,1)$, then reinforcement is too sparse to sustain percolation at $q=p_c$, regardless of the value of $p>p_c$.

\begin{subtheorems}

\begin{theorem}\label{alpha_noperc}
 Assume $\xi_1^x\stackrel{d}{=}\xi_1^y$. If $\bbE\left[(\xi_1^x)^\alpha\right]=\infty$ for some $\alpha\in(0,1)$, then for all $\varepsilon\in(0,1-p_c]$, we have
$$\mathbb{P}_{p_c+\varepsilon,p_c}^{\Xi} ( o\longleftrightarrow \infty) = 0, \quad \mu_{\xi}  \mbox{-almost every}\,\,\Xi.$$
\end{theorem}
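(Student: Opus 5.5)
We prove Theorem \ref{alpha_noperc} by exhibiting, for $\mu_\xi$-almost every $\Xi$, infinitely many disjoint closed circuits in $\bbZ^2_+$ (blocking curves from the axes) around the origin. Each is built inside a large \emph{empty} square, that is, a square region containing no reinforced vertex. The point is that $\bbE[(\xi_1^x)^\alpha]=\infty$ with $\alpha<1$ forces single gaps to dominate partial sums. Write $S_m=\sum_{k\le m}\xi^x_k$. Since $\bbE[(\xi_1^x)^\alpha]=\infty$, the gap sizes are not summable at rate $m^{1/\alpha}$. By the Borel--Cantelli lemma, $\xi^x_m> m^{1/\alpha}$ for infinitely many $m$; more usefully, by the classical result for i.i.d. variables with infinite $\alpha$-moment, $\alpha<1$,
\[
\limsup_{m\to\infty}\frac{\xi^x_m}{S_{m-1}}=\infty \quad \text{a.s.}
\]
(Kesten's theorem on $\max/\text{sum}$, or a direct argument, since $S_{m-1}\le C m^{1/\alpha'}$ fails only rarely). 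So there are infinitely many column gaps $(a_m,b_m)$ with $b_m-a_m\ge K a_m$, for any fixed $K$.

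The key use of $\xi^x\stackrel{d}{=}\xi^y$ is to get such a gap simultaneously in both directions at comparable scales. The two sequences are independent and identically distributed, so they produce gap events at the same scales with the same frequency. Concretely, I would define dyadic scales $2^j$ and the event $A_j^x$ that $[2^j,K2^j]$ (suitably rescaled, say $[2^j,2^{j+L}]$) is contained in a single column gap, and similarly $A_j^y$ for rows. From the heavy-tail hypothesis, $\sum_j \bbP(A_j^x)\bbP(A_j^y)=\infty$. Since the probabilities are equal, this reduces to $\sum_j \bbP(A_j^x)^2=\infty$, which needs checking and is the main obstacle. A dependent Borel--Cantelli argument (the events at well-separated scales are nearly independent via renewal theory) should then give infinitely many $j$ with both events.

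On the event $A^x_j\cap A^y_j$, the annulus-like region $[0,2^{j+L}]^2\setminus[0,2^{j}]^2$ contains reinforced vertices only in the strips $\{x\le 2^j\}$ and $\{y\le 2^j\}$ outside the gap box, with none in the corner square $[2^j,2^{j+L}]^2$. I then need a closed dual path from the $x$-axis to the $y$-axis in this region at $q=p_c$. The route is to use RSW at criticality in the empty square $[2^j,2^{j+L}]^2$ to build closed crossings. These then have to be connected through the strips, which contain reinforced sites of density depending on $\Xi$. Here I would pick $L$ large and use that, near the axes, the strips $[0,2^j]\times[2^{j+L-1},2^{j+L}]$ are crossed \emph{vertically}, in a direction along which reinforced rows are also sparse. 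The probability of a closed path in such a strip is bounded below uniformly, provided the strip itself lies in a row gap. This again holds by choosing both gaps long, and is handled by arranging the gap positions so that the relevant pieces of the circuit lie in fully empty rectangles with aspect ratio bounded by a constant. RSW gives each piece a closed crossing with probability at least $c>0$, and FKG glues the pieces. This yields closed blocking paths with probability at least $c^4$ at infinitely many disjoint scales. Independence of $\omega$ across disjoint regions then implies that a.s. one occurs, so $\mathbb{P}^{\Xi}_{p_c+\varepsilon,p_c}(o\longleftrightarrow\infty)=0$, regardless of $\varepsilon$.
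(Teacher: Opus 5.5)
Your proposal has a genuine gap at the step you yourself call the main obstacle: nothing in it shows that long gaps occur in \emph{both} directions at a \emph{common} scale infinitely often. The one-directional facts you invoke ($\xi^x_m>m^{1/\alpha}$ i.o., Kesten's $\limsup_m \xi^x_m/S_{m-1}=\infty$) need only $\bbE\xi^x_1=\infty$. They give no lower bound on the probability of a gap at a \emph{fixed} scale, and such a bound is what is needed to align two independent sequences. Indeed, for a tail like $\bbP(\xi^x_1>t)\sim 1/(t\log t)$ (infinite mean, yet $\bbE[(\xi^x_1)^\alpha]<\infty$ for all $\alpha<1$), one has $\bbP(\cC\cap(t,2t]=\emptyset)\to0$ and your sum $\sum_j\bbP(A^x_j)^2$ converges. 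So this step must use $\alpha<1$ quantitatively, and your argument does not.

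The missing ingredient, as in the paper, is a renewal estimate. Write $R^x_t=S^x_{N^x(t)+1}-t$ and $M^x(t)=\sum_{n\ge0}\bbP(S^x_n\le t)$. Then
\begin{itemize}
\item $\bbP(\cC\cap(t,2t]=\emptyset)=\bbP(R^x_t>t)\ge\bbP(\xi^x_1>2t)\,M^x(t)$;
\item a union bound plus Markov on $S^x_n$ gives $M^x(t)\ge c\,t/\bbE(\xi^x_1\wedge t)$;
\item $\bbE[(\xi^x_1)^\alpha]=\infty$ with $\alpha<1$ forces $\limsup_t t\,\bbP(\xi^x_1>t)/\bbE(\xi^x_1\wedge t)>0$. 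Otherwise $\bbE(\xi^x_1\wedge t)=O(t^\eta)$ for arbitrarily small $\eta>0$, hence $\bbP(\xi^x_1>t)=O(t^{\eta-1})$, and the $\alpha$-moment would be finite.
\end{itemize}
Together these give $\limsup_t\bbP(R^x_t>t)>0$. Only then does $\xi^x_1\stackrel{d}{=}\xi^y_1$ do its job: $\mu_\xi(\cF_t)=\bbP(R^x_t>t)^2$, so $\limsup_t\mu_\xi(\cF_t)>0$.

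Your second deferred step, a dependent Borel--Cantelli argument with ``nearly independent'' well-separated scales, is also unsupported and is not the right tool. With such tails a single gap typically covers many dyadic scales, so the events $A^x_j$ are strongly correlated. Divergence of $\sum_j\bbP(A^x_j)^2$ would not suffice without a quasi-independence bound that you do not have.

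Once $\limsup_t\mu_\xi(\cF_t)>0$ is established, no such bound is needed. You get $\mu_\xi(\limsup_t\cF_t)\ge\limsup_t\mu_\xi(\cF_t)>0$, and the Hewitt--Savage 0-1 law for the i.i.d.\ pairs $(\xi^x_k,\xi^y_k)$ upgrades this to probability one, since a finite permutation moves only finitely many renewal points.

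Finally, your worry about reinforced vertices in strips near the axes is moot. If $\cC\cap(t,Kt]=\emptyset=\cR\cap(t,Kt]$, every point of $[0,Kt]^2\setminus[0,t]^2$ has a coordinate in $(t,Kt]$. So the whole quarter-annulus is reinforcement-free, and a single RSW bound for critical percolation there gives the blocking path. The rest of your argument (disjoint annuli, independence, infinite product) is then fine.
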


The assumption $\xi_1^x\stackrel{d}{=}\xi_1^y$ in Theorem~\ref{alpha_noperc} can be relaxed under a regular variation assumption on the tails of the spacing distributions. Recall that a function $H:(0,\infty)\rightarrow(0,\infty)$ is said to be regularly varying with tail index $\alpha>0$ if
\begin{equation}\label{regular}
\lim_{t\rightarrow\infty}\frac{H(xt)}{H(t)}=x^{-\alpha},
\end{equation}
for every $x>0$.

Under this assumption, we obtain the following extension of Theorem~\ref{alpha_noperc}.

\begin{theorem}\label{alpha_noperc_regular}
 Assume $\mathbb{P}(\xi_1^x>t)$ and $\mathbb{P}(\xi_1^y>t)$ are regularly varying functions with tail indices $0<\alpha_x,\alpha_y<1$. Then for all $\varepsilon\in(0,1-p_c]$, we have
$$\mathbb{P}_{p_c+\varepsilon,p_c}^{\Xi} ( o\longleftrightarrow \infty) = 0, \quad \mu_{\xi}  \mbox{-almost every}\,\,\Xi.$$
\end{theorem}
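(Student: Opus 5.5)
Under the regular variation assumption, the spacings in both directions are in the domain of attraction of stable laws with indices $\alpha_x,\alpha_y\in(0,1)$. Hence the selected sets $\cC$ and $\cR$ have density zero and contain arbitrarily large gaps. I will follow the same blocking strategy as in the proof of Theorem~\ref{alpha_noperc}. The aim is to find, for $\mu_\xi$-almost every $\Xi$, infinitely many disjoint annuli around the origin that contain no reinforced vertex. Inside such an annulus the model is exactly critical site percolation. By RSW, each annulus then carries a closed circuit surrounding the origin with probability bounded below by some $c>0$ (in $\bbL^2_+$, a closed path joining the two axes). Independence of the annuli, as in Zhang's argument, then gives $\mathbb{P}_{p_c+\varepsilon,p_c}^{\Xi}(o\longleftrightarrow\infty)=0$.

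A reinforcement-free annulus of the form $[0,2n]^2\setminus[0,n]^2$ needs either a column gap or a row gap covering $[n,2n]$ in one coordinate. That alone is not enough, because the remaining strip still contains intersections. So I will use the following region instead. Suppose $\cC$ has a gap containing $(n,Kn)$ and $\cR$ has a gap containing $(n,Kn)$. Then the L-shaped region $([n,Kn]\times[0,Kn])\cup([0,Kn]\times[n,Kn])$ contains no point of $\Xi$, since each of its points has one coordinate in a gap. This region separates the origin from infinity and has aspect ratio bounded in terms of $K$. RSW at criticality therefore gives a closed crossing from the $x$-axis to the $y$-axis through it with probability at least $c(K)>0$, uniformly in $n$.

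The core is therefore a probabilistic estimate on the renewal sets. For a renewal process with regularly varying tail of index $\alpha<1$, the Dynkin--Lamperti theorem says that the straddling interval at time $t$, rescaled by $t$, converges in law to a nondegenerate generalized arcsine distribution. In particular,
\[
\liminf_{n}\mu_{\xi^x}\bigl(\cC\cap(n,Kn)=\emptyset\bigr)\ge c_x(K)>0,
\]
and the same holds for $\cR$. Independence of $\cC$ and $\cR$ gives a positive probability, uniformly in $n$, that both gaps occur at scale $n$. Along a sparse sequence $n_j=M^j$ with $M\gg K$, I then need infinitely many $j$ at which the joint event holds, almost surely. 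The events at different scales are not independent, but they are asymptotically so: the limiting gap process is scale-invariant, being the range of an $\alpha$-stable subordinator, and its events at widely separated scales decorrelate. I would get this from a Kolmogorov 0-1 law for the scale-invariant limit, or more concretely from a second-moment (Kochen--Stone) argument. This requires showing that the correlations between scales $M^i$ and $M^j$ decay as $|i-j|\to\infty$. That can be done using the regenerative structure: conditioned on the position of the first renewal after $KM^i$, the future configuration is a fresh renewal process, and that position is $o(M^j)$ with high probability.

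I expect the main obstacle to be the joint occurrence at infinitely many common scales when $\alpha_x\neq\alpha_y$. Each coordinate separately has gaps at infinitely many scales, but the two sets of good scales have to intersect. Kochen--Stone handles this: $\liminf_n P(A_n\cap B_n)>0$ together with approximate pairwise independence gives $P(\limsup_n (A_n\cap B_n))>0$. Since the event is a tail event for the product of two regenerative structures, its probability is then $1$. Once that holds, the product over the disjoint L-regions of the factors $(1-c(K))$ is zero, which finishes the proof.
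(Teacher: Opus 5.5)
Your argument is correct and is essentially the paper's proof: Dynkin--Lamperti gives $\lim_t\mu_{\xi^x}(\cC\cap(t,2t]=\emptyset)>0$ and likewise for $\cR$, independence gives $\lim_t\mu_\xi(\cF_t)>0$, a zero--one law upgrades $\limsup_t\cF_t$ to probability one, and RSW blocking in disjoint reinforcement-free quarter-annuli (your L-regions with $K=2$) rules out percolation. The Kochen--Stone/decorrelation step you flag as the main obstacle is not needed, because $\mu_\xi(\limsup_n E_n)\ge\limsup_n\mu_\xi(E_n)>0$ holds for arbitrary events; and the upgrade to probability one is cleanest via the Hewitt--Savage law for the i.i.d.\ pairs $(\xi^x_k,\xi^y_k)$, since the event is exchangeable rather than literally a tail event of that sequence.
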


\end{subtheorems}

Together, Theorems~\ref{diluted} and~\ref{alpha_noperc} show that the occurrence of percolation at $q=p_c$ is governed, to a large extent, by the integrability properties of the spacing distributions. Theorem~\ref{alpha_noperc_regular} complements this picture by showing that the symmetry assumption $\xi_1^x\stackrel{d}{=}\xi_1^y$ in Theorem~\ref{alpha_noperc} can be replaced by a regular variation assumption on the tails of the spacing distributions. Our results leave open the intermediate range between finite high-order moments and infinite fractional moments, whose resolution would likely require a monotonicity principle with respect to the spacing law. 

The remainder of the paper is organized as follows. In Section~\ref{proof_theo_1}, we prove Theorem~\ref{diluted_geo} by combining a renormalization scheme with a comparison to the stretched-lattice percolation model introduced by Hoffmann~\cite{H}. In Section~\ref{proof_theo_3}, we prove Theorems~\ref{nopercolation}--\ref{nopercolation_3}. The argument is based on showing that the crossing probabilities of boxes at an appropriate length scale vanish as the box size tends to infinity. Section~\ref{proof_theo_2} is devoted to the proofs of Theorems~\ref{diluted}, \ref{alpha_noperc}, and \ref{alpha_noperc_regular}. The proof of Theorem~\ref{diluted} relies on a geometric analysis of the random environment together with a scale-by-scale renormalization argument, while those of Theorems~\ref{alpha_noperc} and \ref{alpha_noperc_regular} are based on the almost sure occurrence of infinitely many reinforcement-free quarter-annuli surrounding the origin.  Finally, Section~\ref{further} investigates the effect of an additional dilution of the environment on the phase transition and discusses possible extensions of our results.

\section{Geometric spacings - proof of Theorem \ref{diluted_geo}}\label{proof_theo_1}

\subsection{Outline of the proof}

The proof of Theorem \ref{diluted_geo} proceeds through a renormalization argument combined with a comparison to stretched-lattice percolation models introduced by Hoffmann~\cite{H}. We analyse boxes of side length $L$ and classify them as good or bad based on the geometry of the random environment inside a larger surrounding box. More precisely, a box is declared good if the selected rows and columns surrounding it are sufficiently dense, in the sense that consecutive selected lines are separated by at most logarithmic gaps. Standard estimates for geometric random variables imply that, for large $L$, a typical box is good with probability arbitrarily close to one.

We then define a renormalized dependent percolation process by declaring a box occupied whenever there exists an open circuit in the corresponding annulus. Using results from~\cite{Brochette}, we show that the occupation probability of a good box can be made arbitrarily close to one, whereas classical RSW estimates guarantee that even bad boxes remain occupied with uniformly positive probability. This produces a finite-range dependent percolation process with the properties described above.

The second part of the argument consists of comparing this renormalized model with a family of stretched-lattice percolation models. Through a sequence of stochastic domination arguments, we reduce the problem to a homogeneous site percolation process on a modified version of Hoffmann’s model. Hoffmann’s theorem, together with the Liggett--Schonmann--Stacey domination theorem \cite{LSS} and the Grimmett--Stacey \cite{GS} comparison between bond and site percolation, implies that this auxiliary model percolates for an appropriate choice of the parameters. Since the renormalized occupied process dominates this auxiliary model, we conclude that the original reinforced percolation model percolates.

\subsection{The block construction}\label{block_const}

Throughout the paper, we will need the notion of well-spaced intervals. We introduce a definition for future reference.

\begin{definition}[$r$-spaced intervals]
\label{def:k_spaced}
Let $a,b\in\bbZ_+$, $I = [a, b]$ an interval of integers, and $\mathcal{S} \subset \mathbb{Z}_+$ a set of points. Let $I\cap\mathcal{S}$ be ordered as $z_0 < z_1 < \dots < z_n$. We say that the interval $I$ is \textbf{$m$-spaced} with respect to $\mathcal{S}$ if
\[ \max_{j \in \{0, \dots, n-1\}} (z_{j+1} - z_j) \leq m. \] If the intervals $I_1$ and $I_2$ are both $m$-spaced with respect to $(\cS_1,\cS_2)$, $\cS_1\subset\bbZ_+$, $\cS_2\subset\bbZ_+$, respectively, we say that $A=I_1\times I_2$ is \textbf{doubly $m$-spaced} with respect to $(\cS_1,\cS_2)$.  
\end{definition}

Given $L\in \bbZ_+$ and $v=(v_1,v_2)\in \bbZ^2_+$, $v_1\geq 1,\,v_2\geq 1$, let
\begin{equation*}
B_L(v)=\left\{u=(u_1,u_2)\in\bbZ^2_+:0\leq u_i-Lv_i\leq L,\quad i=1,2\right\},
\end{equation*}
be a box of side length $L$.  Note that $\Pi=\{B_{3L}(v): v=(v_1,v_2)\in \bbZ_+^2\, v_1\geq 1, v_2\geq 1\}$ is a collection of overlapping boxes whose union is $\bbZ^2_+$.

\begin{definition}\label{good_box}Given an environment $\Xi=\cC\times\cR$, we say that the box $B_L(x)$ is \textbf{good} if $B_{3L}(v)$ is doubly $(c\log L)$-spaced with respect to $(\cC,\cR)$, for some constant $c>0$. Otherwise, we say $B_L(x)$ is \textbf{bad}.
\end{definition}

Since $\Xi$ is generated by geometric random variables with mean $\rho$, a standard calculation shows that
\begin{equation}\label{env_high}
\mu_\xi (B_L(v)\mbox{ is good})=\mu_\xi ([0,3L]\mbox{ is $(c\log L)$-spaced})^2\equiv \rho_L^2 \xrightarrow[L \to \infty]{} 1.
\end{equation}

Given $\Xi=\cC\times\cR$, we construct a $k$-dependent site percolation process on the graph with vertex set $\Pi$, induced by the model $\bbP^\Xi_{p_c+\varepsilon,p_c-\delta}$. Define $\cA_L(v)$ to be the event that there exists an open circuit in $B_{3L}(v)\setminus B_L(v)$, i.e., that there exists a path $v_0\sim v_1\sim\dots\sim v_k=v_0$ such that
\begin{enumerate}
\item $v_i$ belongs to $B_{3L}(v)\setminus B_L(v)$, for every $i=0,\dots,k$,
\item $\omega(v_i)=1$, for every $i=0,\dots,k$,
\item the winding number of the path around $v$ is non-zero.
\end{enumerate}

We say that the box $B_L(v)$ is \textbf{occupied} if $\cA_L(v)$ occurs; otherwise, we say that the box is \textbf{vacant}. It is clear that if the renormalized process percolates, then percolation also occurs in the microscopic model.

Write
$$\mathbb{P}^{\Xi}_{p_c+\eps,p_c-\delta} (B_L(v)\mbox{ is occupied}) = \begin{cases} p_G(\delta,L), & \text{if $B_{L}(v)$ is good} , \\ p_B(\delta,L), & \text{if $B_{L}(v)$ is bad}. \end{cases}$$

By classical RSW theory, there exists a constant $\sigma>0$ (which does not depend on $L$) such that $p_B(0,L)$ is at least $2\sigma$ whenever $B_L(v)$ is bad (we may consider the case where all vertices in $B_L(v)$ are open with probability $p_c$). Moreover, by Proposition 3 of \cite{Brochette}, if $B_L(v)$ is good, then $p_G(0,L)$ can be made arbitrarily close to one for $L$ sufficiently large. 

The block construction above yields a similar process to the original one, except that renormalized columns and renormalized rows are given according to $k$-dependent  Bernoulli random variables with marginal $\rho_L$, inducing an environment which we denote by $\Xi_L=(\cC_L\times \cR_L)$. In the corresponding (also $k$-dependent) percolation model on $\Xi_L$, vertices lying in the intersection of selected columns and selected rows are occupied with probability $p_G(0,L)$ and all the remaining ones are occupied with probability $p_B(0,L)$.
 
 Given $\Xi_L$ and $p,q\in[0,1]$, consider the $k$-dependent model above (with dependencies both in the environment and in the underlying percolation process) and denote its law by  $\bbP^{k,\Xi_L}_{p,q}$ ($k$ for $k$-dependent) with
$$\mathbb{P}_{p,q}^{k,\Xi_L} (\omega(v) = 1) = \begin{cases} p, & \text{if } v\in \Xi_L, \\ q, & \text{otherwise}. \end{cases}$$

For a renormalized vertex $v=(x,y)\in\bbZ^2_+$, note that $\mu_\xi(x\in \cC_L)=\mu_\xi(y\in \cR_L)=\rho_L$. The following proposition (whose proof is given in Section \ref{prop_1}) will help us to complete the proof of Theorem \ref{diluted_geo}.

\begin{proposition}\label{aux_res} For every $q>0$, there exists $\tilde\rho=\tilde\rho(q)$ and $\tilde p=\tilde p(q)$ large enough such that if $\rho_L\geq \tilde\rho$, then 
$$\bbP^{k,\Xi_L}_{\tilde p,q}(o\longleftrightarrow \infty)>0.$$

\end{proposition}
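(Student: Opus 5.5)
The plan is to reduce Proposition~\ref{aux_res} to Hoffmann's stretched-lattice theorem (Lemma~\ref{t:Hoffmann}), using two stochastic dominations. First I remove the $k$-dependence of the environment, then the $k$-dependence of the percolation process. Throughout I only increase or decrease parameters in the direction that preserves the conclusion, since the event $\{o\longleftrightarrow\infty\}$ is increasing in both the environment (more selected lines means more sites with the larger parameter $\tilde p\ge q$) and the configuration.

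\emph{Step 1: the environment.} The renormalized columns $\cC_L$ are indicators $(\1\{x\in\cC_L\})_x$ forming a $k$-dependent process on $\bbZ_+$ with marginals $\rho_L$; the same holds for $\cR_L$, independently. By the Liggett--Schonmann--Stacey theorem \cite{LSS}, for every $\rho'<1$ there is $\tilde\rho<1$ such that $\rho_L\ge\tilde\rho$ implies that each of these processes dominates an i.i.d.\ Bernoulli($\rho'$) sequence. Coupling these, $\Xi_L$ contains an environment $\Xi'=\cC'\times\cR'$ of exactly Hoffmann's type, with independent Bernoulli($\rho'$) rows and columns. Replacing $\Xi_L$ by $\Xi'$ only lowers the parameters, so it suffices to work with $\Xi'$.

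\emph{Step 2: the percolation process.} Conditionally on the environment, the occupation process is $k$-dependent, with marginals at least $\tilde p$ on $\Xi'$ and at least $q$ elsewhere. I apply \cite{LSS} again, in its inhomogeneous form: a $k$-dependent field whose marginals are at least $a$ dominates a product measure with density $\pi(a)$, where $\pi(a)\to1$ as $a\to1$. The difficulty is that $q$ is fixed and possibly small, so off $\Xi'$ I cannot get close to $1$. To handle this I would use the explicit LSS construction locally: sites with marginal $q>0$ dominate a Bernoulli($q'$) field for some $q'=q'(q,k)>0$, and reinforced sites dominate Bernoulli($p'$) with $p'\to1$ as $\tilde p\to1$. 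Alternatively, I can keep only independence along a sublattice of spacing larger than $k$, applied separately to columns and rows of blocks, so that the restricted process is genuinely independent. I expect this to be the main technical obstacle: the standard LSS statement is homogeneous, and the dependence range must be separated from the sparse inhomogeneity.

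\emph{Step 3: conclusion via Hoffmann.} We are left with independent site percolation on $\bbZ^2_+$: sites of $\Xi'$ are open with probability $p'$ close to $1$, and other sites with probability $q'>0$. Since $q'>0$, every horizontal or vertical segment of length $n$ between consecutive selected lines is open with probability $(q')^{n}$. Declaring a stretched edge of Hoffmann's model open when its whole segment is open, together with the endpoint sites, gives exactly the modified stretched-lattice model. By Hoffmann's result (Lemma~\ref{t:Hoffmann}), when $q'>0$ is fixed and $\rho'$ is close enough to $1$, this model percolates for the corresponding edge parameter. I then pass from Hoffmann's bond statement to our site version using the Grimmett--Stacey comparison \cite{GS}, choosing $\tilde p$ so that $p'$ exceeds the resulting site threshold. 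Working backwards, this choice fixes $\rho'$ and hence $\tilde\rho(q)$, and also fixes $\tilde p(q)$, which proves the proposition.
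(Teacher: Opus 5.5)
Your Step 1 (Liggett--Schonmann--Stacey applied to the environment) matches the paper, but Step 3 has a genuine gap. Lemma~\ref{t:Hoffmann} concerns \emph{homogeneous} percolation with parameter $p\ge p^*$ ($p^*$ close to $1$) on every unit edge of the stretched lattice, so a segment of length $n$ between consecutive intersections is open with probability $p^n$. In your reduced model the $n-1$ interior sites of such a segment are each open with probability $q'$, so the segment is open with probability of order $(q')^{n-1}$. This is much smaller than $(p^*)^n$ once $q'<p^*$, and since $q$ is arbitrary that is the relevant case. So your model dominates no Hoffmann model with an admissible parameter, and the lemma does not give ``percolation for fixed $q'>0$ once $\rho'$ is close to $1$''. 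That is exactly what must be proved; it is the content of Lemma~\ref{domination}.

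The missing idea is the $M$-model. Choose $M$ with $p_1^M<q'$, where $p_1=1-(1-p^*)^4$ is the Grimmett--Stacey site parameter. Replace each spacing $\xi$ by $M(\xi-1)+1$, so that every interior site of a gap becomes a block of $M$ sites. Under homogeneous site percolation at $p_1$ on this lattice, a block is entirely open with probability $p_1^M<q'$, and contracting blocks maps open paths to open paths of your inhomogeneous model. On the other hand, $\bbP(M(\xi-1)+1\ge k)\le(1-\rho^*)^{k-1}$ for all $k$ once $1-\rho\le(1-\rho^*)^M$. So the stretched environment is stochastically denser than Hoffmann's geometric$(\rho^*)$ one, and Hoffmann together with Grimmett--Stacey applies. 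This is also how $\tilde\rho$ comes to depend on $q$ (through $M$), which your ``working backwards'' leaves unexplained.

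Step 2 is not valid as written either. It is false in general that a $k$-dependent field with marginals at least $q$ dominates a Bernoulli$(q'(q,k))$ product field; Liggett--Schonmann--Stacey domination needs marginals close to $1$. For example, take $(Y_i)$ i.i.d.\ Bernoulli$(1/2)$ and $X_i=\1\{Y_i=1,Y_{i+1}=0\}$. This field is $1$-dependent with marginal $1/4$, yet it never has two consecutive ones, so it dominates no nontrivial product measure. Passing to a sublattice of spacing larger than $k$ restores independence but destroys nearest-neighbour connectivity, so it does not help. The paper instead uses extra structure of the block process. The events $\cA_L(v)$ are increasing in the underlying independent site variables, so the renormalized process satisfies the FKG inequality. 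Lemma~9 of \cite{Brochette} then yields domination by an independent field with density close to $1$ on reinforced blocks and positive density elsewhere. You need this positive association; $k$-dependence alone does not suffice.
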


With Proposition \ref{aux_res}, we complete the proof of Theorem \ref{diluted_geo} as follows. Recall that $p_B(0,L)>2\sigma$ for every $L$. Take $q=\sigma$ and choose $L_0$ sufficiently large so that $\rho_{L_0}> \tilde\rho(\sigma)$ (which is possible by \eqref{env_high}) and $p_G(0,L_0)> \tilde p(\sigma)$. By continuity, we may find some $\delta>0$ such that $p_B(\delta,L_0)>\sigma$ and $p_G(\delta,L_0)>\tilde p(\sigma)$, completing the proof of Theorem \ref{diluted_geo}.

\subsection{Proof of Proposition \ref{aux_res}}\label{prop_1}

The proof of Proposition \ref{aux_res} relies on a result of Hoffmann \cite{H}, originally established for a bond percolation model on $\mathbb{Z}^2$. Our argument proceeds through a sequence of comparisons with intermediate models, requiring the introduction of additional notation. We ask the reader’s patience in what follows, as this notation is necessary to make the successive reductions precise.

We begin by formally introducing the percolation model on the stretched lattice and then state Hoffmann’s theorem.

\subsubsection{Bond percolation on a stretched lattice}

Let $\xi^x = (\xi_k^x)_{k \in \mathbb{Z}+}$ and $\xi^y = (\xi_k^y)_{k \in \mathbb{Z}+}$ be independent sequences of $i.i.d.$ geometric random variables with parameter $\rho\in(0,1]$. Let $\mathcal{R}$ and $\mathcal{C}$ denote the sets of selected columns and rows defined in \eqref{spe_col} and \eqref{spe_row}, respectively.

We define the environment $\Lambda \subseteq \mathbb{Z}_+^2$ by
\[
\Lambda := (\mathcal{C} \times \mathbb{Z}_+) \cup (\mathbb{Z}_+ \times \mathcal{R}),
\]
observing that the same product measure $\mu_{\xi^x} \times \mu_{\xi^y}$ governing the environment $\Xi$ also governs $\Lambda$. Whereas $\Xi$ consists of vertices that belong simultaneously to a selected row and a selected column, $\Lambda$ comprises all vertices that belong to some selected row or some selected column.

The environment $\Lambda$ induces a subgraph of the square lattice consisting of the selected rows and columns only. More precisely, we define the associated stretched lattice by
\[
\mathbb{L}_\Lambda = (\mathbb{Z}^2_+, \mathcal{E}(\Lambda)),
\]
where $\mathcal{E}(\Lambda$) is defined as in \eqref{edge_lambda}. We emphasize that this is not the induced subgraph in the usual graph-theoretic sense.

Given $\Lambda\subset \bbZ^2_+$, we construct a \textbf{bond} percolation model on $\mathbb{L}_\Lambda$ by assigning to each edge an independent Bernoulli random variable $\omega(e)$ with parameter $p \in [0,1]$. A percolation configuration is denoted by $\omega \in \{0,1\}^{\cE(\Lambda)}$, and we use the same classical terminology (e.g., open and closed edges).

Many vertices of the stretched lattice are isolated; consequently, they do not influence the connectivity events under consideration and may be discarded if convenient.

Given an environment $\Lambda$, we define the bond percolation measure $\mathbb{Q}_p^{b,\Lambda}$ by
\[
\mathbb{Q}_p^{b,\Lambda} \big( \omega(e) = 1 \big) = p=1-\bbQ^{b,\Lambda}_p(\omega(e)=0),
\]
for every $e\in\cE(\Lambda)$. Here $b$ stands for bond.

The following result was originally established by Hoffmann \cite{H}. We refer the reader to \cite{HSST2} for an alternative proof.

\begin{lemma}[Hoffmann]\label{t:Hoffmann} Let $(\xi_k^x)_{k \in \mathbb{Z}+}$ and $(\xi_k^y)_{k \in \mathbb{Z}+}$ be independent sequences of $i.i.d.$ geometric random variables with parameter $\rho\in(0,1]$. There exist $\rho^* < 1$ and $p^* < 1$ such that if $\rho \geq \rho^*$ and $p \geq p^*$, then
\[
\mathbb{Q}_p^{b,\Lambda}(o \longleftrightarrow \infty) > 0,
\quad \quad \mu_{\xi} \mbox{-almost every $\Lambda$.} \]
\end{lemma}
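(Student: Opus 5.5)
The plan is a multiscale argument that uses Borel--Cantelli to control the environment scale by scale. It exploits that the largest gap among the selected lines in $[0,L]$ is only logarithmic in $L$, with a constant that is small when $\rho$ is close to one. I would work on a fixed realization of $(\cC,\cR)$ and use only quenched estimates.

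\textbf{Step 1 (environment).} Take scales $L_n=2^n$. For geometric spacings, $\mu_{\xi^x}(\text{some gap in }[0,L_n]\text{ exceeds } k)\le L_n(1-\rho)^{k}$. Choose $k_n=\lceil 2\log L_n/\log(1/(1-\rho))\rceil$. By Borel--Cantelli, for $\mu_\xi$-a.e. environment there is a random $n_0$ such that $[0,2L_n]$ is $k_n$-spaced with respect to both $\cC$ and $\cR$ for all $n\ge n_0$. Set $\beta(\rho,p)=2\log(1/p)/\log(1/(1-\rho))$. Then $p^{k_n}\ge p\,L_n^{-\beta}$, and $\beta$ can be made as small as we like by taking $\rho^*$ close to one, for any fixed $p^*<1$.

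\textbf{Step 2 (crossings of one rectangle).} Fix an $L_n\times 2L_n$ rectangle $R$ inside $[0,2L_n]^2$, with $n\ge n_0$. I would show that the probability that $R$ has \emph{no} open left--right crossing in $\mathbb L_\Lambda$ is at most $\exp(-L_n^{c})$ for some $c>0$.
\begin{itemize}
\item Order the selected rows in $R$ as $r_1<r_2<\dots$ and the selected columns as $c_1<c_2<\dots$. There are order $L_n$ of each, and consecutive ones are at most $k_n$ apart.
\item A selected row segment across $R$ is fully open with probability $p^{2L_n}$, which is far too small. Instead, build the crossing column-gap by column-gap.
\item For the $i$-th gap, we need some selected row whose stretch between $c_i$ and $c_{i+1}$ is open (probability $\ge p^{k_n}\ge pL_n^{-\beta}$). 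We also need vertical open paths along $c_i$ joining the row used in gap $i-1$ to the row used in gap $i$.
\item Decouple this by partitioning $R$ vertically into $M=L_n^{1-2\beta}$ horizontal sub-strips. Call a sub-strip \emph{successful} if there is a left--right path that stays inside it, uses only its selected rows, and uses short vertical moves along selected columns.
\item Estimating the probability that a sub-strip is successful is a one-dimensional oriented-type problem. I would handle it by a Peierls count on the dual: a blocking dual path must cut either many short stretched edges, each closed with probability $\le 1-p$, or a full vertical stack of rows inside one gap. The latter has probability $\le (1-p^{k_n})^{\#\text{rows}}$.
\item Independence across disjoint sub-strips then gives the stretched-exponential bound.
\end{itemize}

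\textbf{Step 3 (assembly).} Use the standard circuit construction. For each $n\ge n_0$, require an open circuit in $[0,2L_n]^2\setminus[0,L_n]^2$ (in the quadrant, a path from the $x$-axis to the $y$-axis) together with a radial crossing linking it to the next annulus. Each of these events is an intersection of boundedly many rectangle-crossing events from Step 2, so it fails with probability at most $4\exp(-L_n^{c})$, which is summable. By FKG (all events are increasing), the probability that all of them occur for $n\ge n_1$ is positive, where $n_1\ge n_0$ is chosen with $\sum_{n\ge n_1}4e^{-L_n^c}<1/2$. Finally, the finitely many edges needed to connect $o$ to the first circuit along the axes (which lie in $\Lambda$, since $0\in\cC\cap\cR$) are open with positive probability, again by FKG. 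This gives $\mathbb Q^{b,\Lambda}_p(o\leftrightarrow\infty)>0$.

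\textbf{Main obstacle.} The delicate point is Step 2. Gaps in different columns are independent, but each gap affects every row at once, so the bad events are strongly correlated along columns. The Peierls count must therefore be done conditionally on the environment. The count has to balance the number of dual paths against the cost $p^{k_n}$ of passing a widest gap, and it is here that both $\rho\ge\rho^*$ (small $\beta$) and $p\ge p^*$ (cheap short edges) are needed. If the direct count fails, I would fall back on Hoffmann's genuine multiscale scheme with scales $L_{n+1}=L_n^{\gamma}$. That scheme declares one-dimensional intervals good at scale $n$ when they contain at most one bad subinterval at scale $n-1$, and propagates crossing bounds of the form $1-L_n^{-\theta}$ inductively.
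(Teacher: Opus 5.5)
Your Steps 1 and 3 are fine: Borel--Cantelli on the maximal gap, circuits assembled from rectangle crossings, FKG, and reaching the first circuit along the axes, which lie in $\Lambda$. The problem is Step 2. It carries the entire content of Hoffmann's theorem, and the Peierls count you propose there fails for a structural reason, not merely for lack of detail.

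Pass to the contracted picture. A dual step across the stretch of a selected row inside a column gap of width $g$ has weight $1-p^{g}$. A dual step across the stretch of a selected column inside a row gap of height $h$ has weight $1-p^{h}\ge 1-p$. Fix $g_*$ with $p^{g_*}<(1-p)/2$. Adjacent pairs of column gaps of width at least $g_*$ have positive density $(1-\rho)^{2g_*}$, so almost surely $[0,2L_n]$ contains such a pair for all large $n$. Let $H$ be the number of selected rows in $R$, and consider the top-to-bottom dual paths that:
\begin{enumerate}
\item stay inside these two corridors,
\item cross each of the $H$ selected rows once,
\item decide in each of the $H-1$ row gaps whether to switch corridors.
\end{enumerate}
Their total weight is at least
\[
2\,(1-p^{g_*})^{H}(2-p)^{H-1}\ \ge\ \Bigl(1+\tfrac{p(1-p)}{2}\Bigr)^{H-1},
\]
which grows exponentially in $H$ for every $\rho<1$ and $p<1$. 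These paths are neither made of edges of weight at most $1-p$ nor a full vertical stack inside one gap, so your dichotomy is false. Worse, the union bound over blocking dual paths is useless, both for $R$ and for each of your sub-strips. The sub-strips do not rescue this: their height $L_n^{2\beta}$ still tends to infinity, so the sub-strip problem is genuinely two-dimensional, not one-dimensional.

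The true blocking probability in this example is small. In every row one of the two stretches must be closed, so it is at most $(1-p^{g+g'})^{H}$. The loss comes entirely from counting switching patterns separately. Repairing this means treating clusters of nearby wide column gaps, and of wide row gaps, as single effective obstacles. Such clusters, and clusters of clusters, occur at every scale up to $k_n$, while Step 1 only caps the largest single gap. So this bookkeeping has to be done recursively, and that recursion is exactly Hoffmann's multiscale argument (or the alternative one the paper refers to). Your fallback therefore amounts to citing the result.

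That citation is also all the paper does: it does not prove this lemma but imports it from Hoffmann. If you are allowed to do the same, Steps 1--3 are superfluous. If not, your proposal is missing its central estimate.
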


\subsubsection{The M-model}

In this section, we consider a variation of Hoffmann’s model. Given $\Lambda \subseteq \mathbb{Z}_+^2$, we construct a model by preserving those selected columns (rows) that are at unit distance from each other, while enlarging by a fixed constant factor the spacing between all remaining selected columns (rows). 

Let $(\xi_k^x)_{k \in \mathbb{Z}+}$ and $(\xi_k^y)_{k \in \mathbb{Z}+}$ be independent sequences of $i.i.d.$ geometric random variables with parameter $\rho\in(0,1]$. Let $M\in\bbZ_+$ be an arbitrary positive integer. Given an environment $\Lambda$ induced by $(\xi^x_k)_{k\in\bbZ_+}$ and $(\xi^y_k)_{k\in\bbZ_+}$, define
\[\mathcal{C}_M:=\{0\}\cup\left\{\sum_{k=1}^{m}M(\xi^x_k-1)+1:\;m \ge 1\right\},
\]
\[\mathcal{R}_M:=\{0\}\cup\left\{\sum_{k=1}^{m}M(\xi^y_k-1)+1:\;m \ge 1\right\},
\]
and write
\begin{equation}\label{environment_M}
\Lambda_M := (\mathcal{C}_M \times \mathbb{Z}_+) \cup (\mathbb{Z}_+ \times \mathcal{R}_M).
\end{equation}
In words, we keep unit spacing between selected rows and columns that are at distance one in $\Lambda$, while all other spacings are multiplied by $M$.
As before, we look at homogeneous \textbf{bond} percolation of parameter $p$ on the graph $(\bbZ^2_+,\cE({\Lambda_M}))$. We refer to this model as the \textbf{bond} $M$-model. 

\begin{lemma}\label{c:Hoffmann_crucial}
Let $M\in\bbZ_+$ be fixed and consider the \textbf{bond} $M$-model. Let $p^*$ be as in Lemma~\ref{t:Hoffmann}. 
There exists $\rho= \rho(M)<1$ such that
$$\mathbb{Q}_{p^*}^{b,\Lambda_M}(o \longleftrightarrow \infty)>0,$$
for $\mu_{\xi}$-almost every environment $\Lambda_M$.
\end{lemma}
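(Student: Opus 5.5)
The approach is to reduce the bond $M$-model to Hoffmann's model (Lemma~\ref{t:Hoffmann}) with a modified geometric parameter, by stochastic domination of the spacings. I use the convention that $\xi$ geometric with parameter $\rho$ means $\bbP(\xi=j)=\rho(1-\rho)^{j-1}$, $j\geq1$.

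\textbf{Step 1: encode both models as a grid.} For any stretched lattice built from spacing sequences $(s^x_k)$, $(s^y_k)$, only the vertices at intersections of selected lines can be branch points. Connectivity of the origin to infinity is therefore equivalent to connectivity in the grid $\bbZ_+^2$ whose vertices are the pairs $(i,j)$ of line indices. In this grid, the horizontal grid edge between $(i-1,j)$ and $(i,j)$ stands for a straight segment of $s^x_i$ lattice edges, and is open iff all those edges are open, which has probability $p^{s^x_i}$. Vertical grid edges are treated analogously with $s^y_j$. Distinct grid edges use disjoint sets of lattice edges, so given the environment the grid edges are independent. Hence the quenched model is an inhomogeneous independent bond percolation on $\bbZ^2_+$ with edge parameters $p^{s^x_i}$ and $p^{s^y_j}$. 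This model is monotone: shortening any spacing can only increase $\bbP(o\longleftrightarrow\infty)$.

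\textbf{Step 2: stochastic domination of the spacings.} Let $\eta=M(\xi-1)+1$ be the $M$-model spacing. For $t\geq1$ we have $\eta>t$ iff $\xi-1\geq\lfloor (t-1)/M\rfloor+1$. Therefore
$$\bbP(\eta>t)=(1-\rho)^{\lfloor (t-1)/M\rfloor+1}\leq (1-\rho)^{t/M},$$
using $\lfloor (t-1)/M\rfloor+1\geq (t-M)/M+1=t/M$. Set $1-\rho'=(1-\rho)^{1/M}$. The right-hand side is then exactly $\bbP(\xi'>t)$ for $\xi'$ geometric with parameter $\rho'$, so $\eta\preceq_{st}\xi'$. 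We can thus couple i.i.d.\ sequences with $\eta^x_k\leq \xi'^x_k$ and $\eta^y_k\leq\xi'^y_k$ for all $k$, keeping the $x$- and $y$-sequences independent.

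\textbf{Step 3: choose $\rho(M)$ and conclude.} As $\rho\to1$ we have $\rho'=1-(1-\rho)^{1/M}\to1$. Choose $\rho(M)<1$ so that $\rho'\geq\rho^*$ from Lemma~\ref{t:Hoffmann}. Lemma~\ref{t:Hoffmann} with parameter $\rho'$ and $p=p^*$ shows that the grid model with spacings $\xi'$ percolates for almost every environment. Under the coupling, every edge of the grid model with spacings $\eta$ has parameter $(p^*)^{\eta}\geq (p^*)^{\xi'}$. By the monotonicity of Step~1 (a standard coupling of independent Bernoulli variables), the $M$-model percolates almost surely.

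Almost-sure statements transfer because the coupling is defined on a single probability space: the set of $\xi'$-environments where Hoffmann's conclusion fails is null, and so its preimage under the coupling is null as well. Any $\rho\geq\rho(M)$ also works, since $\rho'$ is increasing in $\rho$.

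\textbf{Main obstacle.} The delicate point is Step~1: checking that the identification of the stretched lattice with the weighted grid is faithful. In particular, it must hold near the boundary of the quadrant and for the line at $0$, where the first spacing is measured from the origin and the formula defining $\mathcal C_M$ (which, as written, starts at $0$ and adds $1$) must be matched with the sequence $\eta_k$. I would handle this by writing the grid correspondence explicitly and noting that any constant shift of the first line changes only finitely many edge probabilities, which is harmless for positivity of $\bbP(o\longleftrightarrow\infty)$ by a finite-energy argument.
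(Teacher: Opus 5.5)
Your proof is correct and follows the paper's route. Both arguments stochastically dominate the $M$-model spacings $M(\xi-1)+1$ by geometric spacings with parameter at least $\rho^*$ (your threshold $\rho\ge 1-(1-\rho^*)^M$ is the paper's), couple the environments, and conclude via monotonicity in the spacings and Lemma~\ref{t:Hoffmann}. Your grid encoding supplies the monotonicity that the paper only asserts, and your tail computation gives the domination in the direction the coupling actually needs, whereas \eqref{geo_dom} as printed makes $M(X_1-1)+1$ dominate a geometric variable, which is the opposite direction.
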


\begin{proof}

We start with a simple probabilistic observation.
Let $X_1$ be a geometric random variable with parameter $r_1<1$ (denoted by $X_1\sim \mbox{Geo}(r_1)$). Also, let $X_2\sim\mbox{Geo}(r_2)$, with $r_2>1-(1-r_1)^M$, and define $$Y=M(X_1-1)+1.$$
A standard calculation shows that, for all $k\in\bbZ_+$, 
\begin{equation}\label{geo_dom}P(Y\geq k)\geq P(X_2\geq k).
\end{equation}

Let $$\xi^x=(\xi^x_k)_{k\in\bbZ_+} \mbox{ and } \xi^y=(\xi^y_k)_{k\in\bbZ_+}$$ be $i.i.d.$ geometric with parameter $\rho^*<1$ (as in the statement of Lemma \ref{t:Hoffmann}). Also, let $$\bar{\xi}^x=(\bar{\xi}^x_k)_{k\in\bbZ_+} \mbox{ and } \bar{\xi}^y=(\bar{\xi}^y_k)_{k\in\bbZ_+}$$ be $i.i.d.$ geometric with parameter $\rho_1>1-(1-\rho^*)^M$. By \eqref{geo_dom}, one can find a coupling of $(\xi^x,\xi^y)$ and $(\bar{\xi}^x,\bar{\xi}^y)$ such that
$$\bar{\xi}^x_k\leq \xi^x_k$$ and
$$\bar{\xi}^y_k\leq \xi^y_k,$$ for all $k\in\bbZ_+$. Since the percolation measure is monotone in $(\xi^x,\xi^y)$, the desired result follows from Lemma~\ref{t:Hoffmann}.
\end{proof}

\begin{remark}\label{gri_sta} By Grimmett-Stacey's theorem \cite{GS}, Lemma \ref{c:Hoffmann_crucial} still holds in the context of homogeneous site percolation with parameter $p_1=1-(1-p^*)^4<1$. For future reference, we state this fact as a lemma.
\end{remark}

Given $\Lambda$, we denote by $\bbQ_p^{s,\Lambda}$ the law of site percolation on $\Lambda$ with parameter $p$. Here $s$ stands for site.

\begin{lemma}\label{c:Hoffmann_crucial_site}
Let $M\in\bbZ_+$ be fixed and consider the homogeneous \textbf{site} $M$-model. Let $p^*<1$ be as in Lemma \ref{t:Hoffmann} and $p_1=1-(1-p^*)^4$ as in Remark \ref{gri_sta}. There exists $\rho = \rho(M)<1$ such that 
$$\mathbb{Q}_{p_1}^{s,\Lambda_M}(o\longleftrightarrow \infty)>0,$$ 
for $\mu_{\xi}$-almost every environment $\Lambda_M$.
\end{lemma}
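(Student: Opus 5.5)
The plan is to deduce Lemma~\ref{c:Hoffmann_crucial_site} from Lemma~\ref{c:Hoffmann_crucial} by the standard site--bond comparison of Grimmett and Stacey \cite{GS}, as indicated in Remark~\ref{gri_sta}. We fix $M$ and take $\rho=\rho(M)$ as in Lemma~\ref{c:Hoffmann_crucial}. Then for $\mu_\xi$-almost every $\Lambda_M$ bond percolation with parameter $p^*$ on $\mathbb{L}_{\Lambda_M}=(\bbZ^2_+,\cE(\Lambda_M))$ percolates from $o$. Since $\Lambda_M$ is fixed once the environment is fixed, it suffices to prove, for each fixed graph $G=\mathbb{L}_{\Lambda_M}$ of maximal degree at most $4$, that
\[
\mathbb{Q}^{b,\Lambda_M}_{p^*}(o\longleftrightarrow\infty)>0 \quad\Longrightarrow\quad \mathbb{Q}^{s,\Lambda_M}_{p_1}(o\longleftrightarrow\infty)>0,
\qquad p_1=1-(1-p^*)^{4}.
\]
Here site percolation is understood on the vertices of $\Lambda_M$, with adjacency given by the edges of $\cE(\Lambda_M)$.

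The Grimmett--Stacey theorem gives the bound $\theta^{s}_G(1-(1-p)^{\Delta-1})\ge \theta^b_G(p)$ for a graph $G$ of maximal degree $\Delta$; this holds for any locally finite graph and any root. With $\Delta\le 4$, the value $1-(1-p^*)^{3}$ already suffices. By monotonicity of site percolation in its parameter, the larger value $p_1=1-(1-p^*)^4$ works as well.

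If one prefers a self-contained argument, one can couple site and bond percolation directly by exploring the cluster of $o$. Process the vertices in order of discovery. When a vertex is reached along an edge, declare it open in the site model if at least one of its remaining (at most three) unexplored incident edges is open in the bond model, and reuse those edge variables. The sequential construction shows that the site configuration restricted to explored vertices stochastically dominates independent Bernoulli($1-(1-p^*)^{3}$) variables. Moreover, every infinite bond path from $o$ yields an infinite open site path.

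The root needs slightly more care. One either conditions on $o$ being open, which has positive probability, or uses the stated $1-(1-p^*)^4$ so that all four edges at $o$ are covered. The condition $o\in\Lambda_M$ holds because $0\in\mathcal{C}_M$.

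The main obstacle is ensuring that the exploration coupling genuinely gives independence, or at least domination, of the site variables when edges are shared between vertices. The careful handling in \cite{GS} addresses exactly this, so I would invoke their theorem rather than reprove it. It remains only to check that its hypotheses (a countable, locally finite graph with bounded degree) hold for every realization of $\Lambda_M$, which is immediate. Since the implication holds pointwise in the environment, the almost-sure statement in Lemma~\ref{c:Hoffmann_crucial} transfers directly, giving the claim for $\mu_\xi$-almost every $\Lambda_M$.
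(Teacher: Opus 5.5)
Your proposal is correct and is essentially the paper's argument: the paper proves this lemma in one line (Remark~\ref{gri_sta}) by applying the Grimmett--Stacey site--bond comparison to the bond $M$-model of Lemma~\ref{c:Hoffmann_crucial}, environment by environment, exactly as you do. Your additional observations are valid refinements rather than a different route: the exponent $\Delta-1=3$ already suffices, so $p_1=1-(1-p^*)^4$ follows by monotonicity, and the root causes no trouble (in fact $o$ has only two neighbours in $\mathbb{Z}^2_+$).
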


Recall de definition of $\bbP^\Xi_{p,q}$ in \eqref{perc_measure}. The next lemma states that our original model percolates when $p$ and $\rho$ are sufficiently large.

\begin{lemma}\label{domination} Let $p_1$ be as in Lemma~\ref{c:Hoffmann_crucial_site} and $q>0$. There exists $\rho_1<1$ such that
$$\bbP_{p_1,q}^{\Xi}(o\longleftrightarrow\infty)>0,$$
for $\mu_{\xi}$-almost every environment $\Xi$.

\end{lemma}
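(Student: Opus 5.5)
We prove Lemma \ref{domination} by embedding the site $M$-model of Lemma \ref{c:Hoffmann_crucial_site} into the original model $\bbP^{\Xi}_{p_1,q}$. In the stretched lattice $\Lambda_M$, a long horizontal segment of a selected row between two consecutive selected columns is replaced by a corridor of $q$-sites in the original lattice. The reinforced sites of $\Xi$ play the role of the junctions of $\Lambda_M$, where selected rows and columns cross. The price of crossing a corridor of non-reinforced sites is controlled by choosing $\rho_1$ close to $1$. Then almost all spacings equal one, runs of consecutive selected lines at unit distance are long, and a gap of length at least two is a rare defect.

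\emph{Step 1 (environment coupling).} Given $\Xi=\cC\times\cR$ with spacings $\xi^x,\xi^y\sim\mathrm{Geo}(\rho_1)$, use the same spacings to build $\Lambda_M$ as in \eqref{environment_M}. Unit spacings in $\cC$ correspond to unit spacings in $\cC_M$, and a gap of length $\xi\geq 2$ in $\cC$ corresponds to a gap of length $M(\xi-1)+1$ in $\cC_M$. Within a block of consecutive unit-spaced selected columns and rows, $\Xi$ is a full square grid of reinforced sites, matching the corresponding full grid in $\Lambda_M$.

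\emph{Step 2 (crossing a gap).} Consider a selected row $y\in\cR$ and two consecutive selected columns at distance $\xi\geq2$. The straight segment between them contains $\xi-1$ non-reinforced sites, all open with probability $q^{\xi-1}$. Straight segments are too costly, so instead we declare the corresponding edge-string in $\Lambda_M$, of $M(\xi-1)$ sites, open only if an event of probability at most $q^{\xi-1}$ occurs. The exponents match, so we need
\[
q^{\xi-1}\ \ge\ p_1^{M(\xi-1)}\quad\Longleftrightarrow\quad q\ge p_1^{M},
\]
which holds once $M=M(q)$ is chosen large, since $p_1<1$. This is presumably the purpose of the factor $M$.

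Concretely, I would build a monotone coupling. For each long segment of $\Lambda_M$ of length $M(\xi-1)$, split its $p_1$-sites into $\xi-1$ groups of $M$ sites. Couple each group with one $q$-site of the original segment so that "all $M$ sites open" (probability $p_1^M\le q$) implies "the $q$-site is open". Sites of $\Lambda_M$ lying on unit-spaced blocks are coupled identically with reinforced sites of parameter $p_1$. Vertical segments are treated symmetrically. Non-reinforced sites not lying on any selected line are simply ignored, which only decreases connectivity.

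\emph{Step 3 (conclusion and main obstacle).} Under the coupling, every open path in the site $M$-model projects to an open path in $\Xi$-percolation. By Lemma \ref{c:Hoffmann_crucial_site} with $\rho_1\ge\rho(M(q))$, we get $\bbP_{p_1,q}^\Xi(o\longleftrightarrow\infty)>0$ for $\mu_\xi$-a.e.\ $\Xi$.

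The main obstacle is making this projection legitimate. Each $q$-site of the original lattice must be used by only one group. Also, junction vertices of $\Lambda_M$ at which a long row segment crosses a long column segment correspond to non-reinforced sites of the original lattice. At such a crossing the long row segment and the long column segment share a site, so a single $q$-site would be needed by both groups. I would handle this by assigning each shared site to the horizontal segment only. The vertical segment's group then uses a neighbouring site, or the vertical segment simply absorbs one extra factor; one could also enlarge $M$ to $M+1$ and take $q\ge p_1^{M}$ with slack.

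Since site connectivity in $\Lambda_M$ only needs the vertices themselves, the map from $\Lambda_M$-vertices to disjoint sets of original sites can be defined. The map is one group of $M$ vertices to one $q$-site on long stretches, and one vertex to one reinforced site on unit stretches. I would verify that it is injective and preserves adjacency up to concatenation, and then stochastic domination of the pushforward finishes the proof.
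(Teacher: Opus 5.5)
Your proposal is correct and is essentially the paper's argument: choose $M$ with $p_1^M\le q$, send each junction of $\Lambda_M$ to the corresponding site of $\Xi$ and each block of $M$ consecutive non-junction sites to one site of $\Lambda\setminus\Xi$ (taking the product of the block's states), note that the resulting configuration is dominated by the measure that is $p_1$ on $\Xi$, $q$ on $\Lambda\setminus\Xi$ and $0$ elsewhere, and project infinite open paths. The ``main obstacle'' you describe does not actually arise: rows and columns of $\Lambda_M$ meet only at points of $\cC_M\times\cR_M$, which correspond to reinforced sites, and row-segment interiors (first coordinate outside $\cC_M$) are disjoint from column-segment interiors (first coordinate in $\cC_M$), so the blocks are automatically pairwise disjoint and no reassignment of sites or enlargement of $M$ is needed.
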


\begin{proof} Given an environment $\Lambda$, its subset $\Xi=\cC\times\cR$, and $p,q\in[0,1]$, define the auxiliary measure
\begin{equation}\label{inhomo}\bar{\mathbb{P}}_{p,q}^{s,\Xi} (\omega(v) = 1) = \begin{cases} p, & \text{if } v\in \Xi, \\ q, & \text{if } v\in \Lambda\setminus\Xi, \\ 0, & \text{otherwise}.   \end{cases}
\end{equation}
Since $\bbP^{\Xi}_{p,q}$ stochastically dominates $\bar{\bbP}_{p,q}^{s,\Xi}$, it is enough to prove that the latter percolates. Choose $M\in\bbZ_+$ such that $p_1^{M}<q$ and apply Lemma~\ref{c:Hoffmann_crucial_site} to obtain $\rho_1=\rho_1(M)<1$ such that $\bbQ_{p_1}^{s,\Lambda_M}(o\longleftrightarrow\infty)>0$ for $\mu_{\xi}$-almost every $\Lambda_M$. 

By \eqref{environment_M}, if two consecutive vertices of $\Xi$ on a selected line are at $\Lambda$-distance $n$, then their counterparts in $\Xi_M\coloneq\cC_M\times \cR_M$ are at $\Lambda_M$-distance $M(n-1)+1$. Hence each vertex $v\in\Lambda\setminus \Xi$ corresponds naturally to a unique block $b(v)$ of $M$ consecutive vertices in $\Lambda_M\setminus \Xi_M$ (see Figure~\ref{fig:Mblock}). 

Given a configuration $\omega$ sampled according to $\bbQ_{p_1}^{s,\Lambda_M}$, define the map $\Psi:\{0,1\}^{\Lambda_M}\to\{0,1\}^{\Lambda}$,
\begin{equation}\label{omegaint}(\Psi\omega)(v) = \begin{cases} \omega(v), & \text{if } v\in \Xi, \\ \prod_{w\in b(v)}\omega(w), & \text{if } v\in \Lambda\setminus\Xi, \\ 0, & \text{otherwise}.   \end{cases}
\end{equation}

Since the blocks $b(v)$ are pairwise disjoint, the random variables $(\Psi\omega)(v)$ are independent Bernoulli with mean $p_1$ on $\Xi$. Also, since $p_1^M<q$ on $\Lambda\setminus \Xi$, the law of $\Psi\omega$ is stochastically dominated by $\bar{\mathbb{P}}_{p_1,q}^{s,\Xi}$. 

Any $\omega$-open path in $\Lambda_M$ projects naturally onto a path in $\Lambda$: every segment between consecutive vertices of $\Xi_M$ crosses all $M$ vertices of each block it traverses. Therefore, if the projected path visits a vertex $v\in \Lambda\setminus\Xi$, then every vertex of the corresponding block $b(v)$ is $\omega$-open, which implies $(\Psi\omega)(v)=1$. Consequently, the following inclusion holds

\begin{equation}\label{eq:contractinclusion}
    \{\omega\in\{0,1\}^{\Lambda_M}\ :\ o \longleftrightarrow\infty\text{ in } \omega\} \subseteq \Psi^{-1}(\{\omega'\in\{0,1\}^{\Lambda}: o\longleftrightarrow \infty\text{ in }\omega'\}).
\end{equation}
Taking probabilities on \eqref{eq:contractinclusion} under $\bbQ_{p_1}^{s,\Lambda_M}$ completes the proof.

\end{proof}
\begin{figure}[ht]
    \centering
    \input Mblock.tex
    \caption{The vertex identification given by Lemma~\ref{domination} on a sample path. Filled black dots are intersection vertices ($u,v\in\Xi_M$ and $u',v'\in\Xi$); white dots are non-intersection vertices on selected lines.}
    \label{fig:Mblock}
\end{figure}


Our construction in Section \ref{block_const} yields a $k$-dependent process such that the FKG inequality holds, in particular for bad boxes. Therefore, we are in precisely the same setting as in \cite{Brochette} (see the discussion after Lemma 8 in that paper). It follows from Lemma 9 in \cite{Brochette} that the process of occupied boxes stochastically dominates an independent percolation process, in the spirit of Liggett-Schonmann-Stacey's theorem \cite{LSS}. 

To conclude the proof of Proposition \ref{aux_res}, apply the theorem of Liggett, Schonmann and Stacey to find some $\tilde{p}<1$
such that any $k$-dependent family of random variables with marginal $\tilde p$ stochastically dominates Bernoulli independent variables with parameter $p_1$, where $p_1$ is as in Lemma~\ref{c:Hoffmann_crucial_site}. Applying the same theorem to the environment, we obtain $\tilde{\rho}<1$ such that the $k$-dependent environment (seen as Bernoulli 0-1 variables with marginal $\tilde{\rho}$) dominates the $i.i.d.$ environment with parameter $\rho_1$, where $\rho_1$ is as in Lemma \ref{domination}. Therefore, if $\rho_L>\tilde\rho$, Lemma~\ref{domination} implies that
$$\bbP_{\tilde p
,q}^{k,\Xi_L}(o\longleftrightarrow \infty)>0,$$
 completing the proof of Proposition \ref{aux_res} .

\section{No threshold shift - proof of Theorem \ref{nopercolation}}\label{proof_theo_3}

In this section, we prove Theorem \ref{nopercolation}. The proof follows the general strategy of Theorem 1.2 in \cite{HSST}. The key idea is that, with high probability, every sufficiently large rectangle contains a gap without reinforced vertices whose width is large. We show that the probability of crossing such rectangles therefore vanishes as their size tends to infinity.

 Recall that $\xi^x=(\xi_k^x)_{k\in\bbZ_+}$ is a sequence of $i.i.d.$   geometric variables with parameter $\rho\in(0,1]$, while  $\xi^y=(\xi_k^y)_{k\in\bbZ_+}$ is a sequence of $i.i.d.$ random variables satisfying \eqref{heavy-tailed-2}. Throughout this section, \bbQ denotes the joint law of the sequences $(\xi_k^x)_{k\in\bbZ_+}$ and $(\xi_k^y)_{k\in\bbZ_+}$, and $\bbE$ denotes expectation with respect to $\bbQ$.

Given a realization of $(\xi^x,\xi^y)$, let $\bbP_{p,q}^{\xi^x,\xi^y}$ denote the law governing the resulting percolation process in $\bbL^2_+$. Define the annealed measure
\begin{equation*}\mathbb{P}_{p,q}(\cdot)=\int \mathbb{P}_{p,q}^{\xi^x,\xi^y}(\cdot)\,d\bbQ,
\end{equation*}
viewed as a percolation measure on the sites of $\bbL^2_+$.

For $S\subset \bbZ^2_+$ and $A,B\subset S$, we write $\{A\stackrel{S}{\longleftrightarrow}B\}$ if some vertex in $A$ is connected in $S$ to some vertex in $B$. Finally, we denote by $m_\delta>0$ the exponential decay rate for subcritical site percolation on $\bbZ^2$ at parameter $p_c-\delta$.
\vspace{0.3cm}

\noindent\textit{Proof of Theorem \ref{nopercolation}}: Since increasing the opening probability at reinforced vertices can only facilitate percolation, it suffices to consider the extremal case in which every reinforced vertex is open deterministically. Thus, throughout the proof we work under $\mathbb{P}_{1,p_c-\delta}^{\Xi}$, with $\delta>0$.

Fix $T\in\bbZ_+$ and define the rectangle 
\begin{equation}\label{rectangle_beta}
R_T=[0,T]\times [0,T^\beta],
\end{equation}
with 
\begin{equation}\label{def_beta}
\beta=m_\delta/[4\log(1/\rho)],\, \rho\in(0,1].
\end{equation} 
Write $$L_1(T)=\{T\}\times[0,T^\beta]$$ and $$L_2(T)=[0,T]\times\{T^\beta\}$$ for the right and top sides of $R_T$, respectively. Since our percolation process lives in $\bbL^2_+$, we have
\begin{equation}\label{crossing}
\mathbb{P}_{1,p_c-\delta}(o\longleftrightarrow \infty)\leq \liminf_{T\rightarrow \infty}\left[\mathbb{P}_{1,p_c-\delta}\left(o\stackrel{R_T}{\longleftrightarrow}L_1(T) \right)+\mathbb{P}_{1,p_c-\delta}\left(o\stackrel{R_T}{\longleftrightarrow}L_2(T) \right)\right].
\end{equation}

Given the random pair $(\xi^x,\xi^y)$, we write
\begin{equation*}\label{kth_arrival}
S^x_k=\xi^x_1+\cdots+\xi^x_k,
\end{equation*}
and
\begin{equation}\label{max_arrival}
N^x(t)=\max\{k: S^x_k\leq t\}.
\end{equation}
The quantities $S^y_k$ and $N^y(t)$ are defined analogously.

Finally, define the events
\begin{equation*}
    H^T_k=\left\{\left[S^x_{k-1}, S^x_{k}\right]\times\left[0,T^\beta\right]\mbox{ is crossed horizontally}\right\}
\end{equation*}
and
\begin{equation*}
     V^T_k=\left\{\left[0,T\right]\times \left[S^y_{k-1}, S^y_{k}\right]\mbox{ is crossed vertically}\right\}.
\end{equation*}

We break the proof into two steps.
\bigskip

\noindent\textit{Step 1.} We first show that 
\begin{equation}\label{bound_1}
\lim_{T\rightarrow \infty}\mathbb{P}_{1,p_c-\delta}\left(o\stackrel{R_T}{\longleftrightarrow}L_1(T) \right)=0.
\end{equation}
Indeed, consider the event
$$ A_1=\left\{\max_{k\leq N^x(T)}  \xi^x_k>\frac{\log T}{2\log(1/\rho)}\right\}.$$
A standard estimate for the longest run of Bernoulli trials yields $$\bbQ(A_1)\longrightarrow 1, \mbox{ as $T$ goes to infinity.}$$

Suppose that $(\xi^x,\xi^y)\in A_1$, and let $k_1$ be such that $\xi^x_{k_1}>\frac{\log T}{2\log(1/\rho)}$. Then, every vertex $v=(v_1,v_2)$ with $S^x_{k_1-1}<v_1< S^x_{k_1}$ is open with probability $p_c-\delta$, and by standard percolation arguments we have, for every $v\in \{S^x_{k_1-1}\} \times [0,T^\beta]$, and for every $\delta>0$,  
\begin{equation}\label{exp_dec_2}
\mathbb{P}_{1,p_c-\delta}^{\xi^x,\xi^y}\left(v\stackrel{R_T}{\longleftrightarrow}\{S^x_{k_1}\}\times[0,T^\beta] \right)\leq T^{-\frac{m_\delta}{2\log(1/\rho)}}.
\end{equation}

Recalling \eqref{def_beta}, we then have 
\begin{align}
    \mathbb{P}_{1,p_c-\delta}\left(o\stackrel{R_T}{\longleftrightarrow}L_1(T) \right)&\leq \bbQ(A_1^c)+\int_{A_1}\mathbb{P}_{1,p_c-\delta}^{\xi^x,\xi^y}\left(o\stackrel{R_T}{\longleftrightarrow}L_1(T) \right)d\bbQ\nonumber\\
    &\leq \bbQ(A_1^c)+\int_{A_1}\mathbb{P}_{1,p_c-\delta}^{\xi^x,\xi^y}(H^T_{k_1})d\bbQ\nonumber\\
    & \stackrel{(\ref{exp_dec_2})}{\leq} \bbQ(A_1^c)+ T^\beta T^{-\frac{m_\delta}{2\log(1/\rho)}}\longrightarrow 0,\text{ as $T\longrightarrow\infty$}.
\end{align}

\bigskip

\noindent\textit{Step 2.} We now show that 
\begin{equation}\label{bound_4}
\liminf_{T\longrightarrow \infty}\mathbb{P}_{1,p_c-\delta}\left(o\stackrel{R_T}{\longleftrightarrow}L_2(T) \right)=0.
\end{equation}

Indeed, consider the event
$$A_2=\left\{\max_{k\leq N^y(T^\beta)}\xi^y_k>\frac{2}{m_\delta}\log (T)\right\}.$$
On the event $A_2^c$, every spacing before height $T^\beta$ is at most $\frac{2}{m_\delta}\log(T)$. Consequently,
$$T^\beta\leq S_{N^y(T^\beta)+1}\leq [N^y(T^\beta)+1]\frac{2}{m_\delta}\log(T),$$
which implies
\begin{equation}\label{inclusion}
N^y(T^\beta)\geq \frac{m_\delta T^\beta}{2\log T}-1\geq \frac{m_\delta T^\beta}{4\log T}.
\end{equation}
Therefore, 
\begin{align*}
\bbQ(A_2^c)&\leq \bigg[\bbP\bigg(\max_{k\leq (m_\delta T^\beta)/4\log(T)}\xi_k^y\leq (2/m_\delta)\log(T)\bigg)\bigg]\\
&\leq \bigg[1-\bbP(\xi^y_1> (2/m_\delta)\log(T))\bigg]^\frac{m_\delta T^\beta}{4\log(T)}\\
&\leq \exp\bigg(-\frac{m_\delta T^\beta}{4\log(T)}\bbP(\xi^y_1> (2/m_\delta)\log(T))\bigg).
\end{align*}
Setting $t=(2/m_\delta)\log(T)$ and $\vartheta =e^{-m_\delta\beta/4}$, condition \eqref{heavy-tailed-2} implies $$\liminf_{T\longrightarrow \infty} \bbQ(A_2^c)=0.$$

As before, assume that $\xi^x,\xi^y\in A_2$ are given and that $\xi^y_{k_2}>\frac{2}{m_{\delta}}\log(T)$ for some $k_2$. Then, every vertex $v=(v_1,v_2)$ with $S^y_{k_2-1}<v_2< S^y_{k_2}$ is open with probability $p_c-\delta$, and for every $v\in [0,T]\times\{S^y_{k_2-1}\}$, and for every $\delta>0$,
\begin{equation}\label{exp_dec_3}
\mathbb{P}_{1,p_c-\delta}^{\xi^x,\xi^y}\left(v\longleftrightarrow [0,T]\times\{S^y_{k_2}\}\right)\leq e^{-m_\delta\frac{2}{m_\delta}\log(T)}.
\end{equation}

Therefore,

\begin{align}
    \mathbb{P}_{1,p_c-\delta}\left(o\stackrel{R_T}{\longleftrightarrow}L_2(T) \right)&\leq \bbQ(A_2^c)+\int_{A_2}\mathbb{P}_{1,p_c-\delta}^{\xi^x,\xi^y}\left(o\stackrel{R_T}{\longleftrightarrow}L_2(T) \right)d\bbQ\nonumber\\
    &\leq \bbQ(A_2^c)+\int_{A_2}\mathbb{P}_{1,p_c-\delta}^{\xi^x,\xi^y}(V^T_{k_2})d\bbQ\nonumber\\
    &\stackrel{(\ref{exp_dec_3})}{\leq} \bbQ(A_2^c)+ T \cdot e^{-m_\delta\frac{2}{m_\delta}\log(T)}\nonumber,
\end{align}
yielding to
\begin{equation}\label{top}
\liminf_{T\longrightarrow \infty}\mathbb{P}_{1,p_c-\delta}\left(o\stackrel{R_T}{\longleftrightarrow}L_2(T) \right)=0.
\end{equation}
The theorem now follows from \eqref{crossing}, \eqref{bound_1}, and \eqref{bound_4}.

\qed

The proofs of Theorems~\ref{nopercolation_2} and~\ref{nopercolation_3} are immediate modifications of the preceding argument. We briefly indicate the required changes.

\bigskip

\noindent\textit{Proof of Theorem \ref{nopercolation_2}.} Since the spacing distributions in the two coordinate directions coincide, we choose $\beta=1$ in the definition of $R_T$ in \eqref{rectangle_beta}. By symmetry, 
$$\mathbb{P}_{1,p_c-\delta}\left(o\stackrel{R_T}{\longleftrightarrow}L_1(T) \right)=\mathbb{P}_{1,p_c-\delta}\left(o\stackrel{R_T}{\longleftrightarrow}L_2(T) \right).$$ 
Therefore, by \eqref{crossing},
$$\mathbb{P}_{1,p_c-\delta}(o\longleftrightarrow \infty)\leq 2\liminf_{T\rightarrow \infty}\left[\mathbb{P}_{1,p_c-\delta}\left(o\stackrel{R_T}{\longleftrightarrow}L_2(T) \right)\right].$$
Apply \eqref{top} to conclude the proof.

\qed
\bigskip

\noindent\textit{Proof of Theorem \ref{nopercolation_3}.} Again choose $\beta=1$. Since both spacing distributions satisfy the stronger assumption \eqref{heavy_tailed_3}, the arguments of Steps 1 and 2 apply in both coordinate directions. Consequently,
\begin{equation}\label{final}
 \lim_{T\rightarrow \infty}\left[\mathbb{P}_{1,p_c-\delta}\left(o\stackrel{R_T}{\longleftrightarrow}L_i(T) \right)\right]=0,\mbox{ $i=1,2$},
\end{equation}
and the conclusion follows from \eqref{crossing}.

\section{Critical behavior - proof of Theorem \ref{diluted}}\label{proof_theo_2}

Before proceeding with the proof of Theorem~\ref{diluted}, let us briefly explain why the strategy used in the proof of Theorem~\ref{diluted_geo} cannot be adapted to this case. In the case of geometric spacings, our approach relies on a block construction together with comparisons to a sequence of intermediate models, including a modified version of Hoffmann's model \cite{H}. However, the analogue of Hoffmann’s model with heavy-tailed spacings (instead of geometric) does not percolate for any value of $p$ (see Theorem 1.2 in \cite{HSST2}). Consequently, such a comparison becomes ineffective for establishing a phase transition in our model. 

Rather than adapting Hoffmann’s framework, we develop a scale-by-scale renormalization scheme that avoids a full multiscale dynamical analysis. At first glance, one drawback would be that this approach does not allow us to prove percolation when the non-reinforced vertices are open with probability strictly below $p_c$, regardless of the choice of $p$ in \eqref{perc_measure}. However, Theorem~\ref{nopercolation} shows that percolation indeed does not occur in this regime.

\subsection{Proof of Theorem \ref{diluted}}

The proof of Theorem \ref{diluted} combines a geometrical control of the random environment with a scale-by-scale renormalization argument. The first step consists of showing that, with positive probability, the reinforced vertices are sufficiently well distributed across all relevant scales. More precisely, if the spacing variables satisfy a polynomial tail bound with exponent $\alpha>1$, then for every $\beta>1/\alpha$ one can choose scales $L_k=2^kL_0$ so that, with positive probability, every box $[0,2L_k]^2$ is doubly $L_k^\beta$-spaced, in the sense of Definition \ref{def:k_spaced}. Conditioned on such environments, a result from \cite{Brochette} implies that crossing probabilities in the inhomogeneous model dominate those of homogeneous Bernoulli percolation with parameter $p_c+L_k^{-\beta c_2}$. Letting $c_1$ be an upper bound for the critical exponent of the correlation length, namely $\xi(p_c+\varepsilon)\leq C\varepsilon^{-c_1}$, and choosing $\beta$ so that $\beta c_1c_2<1$, one obtains, after a suitable adjustment of scales, crossing probabilities arbitrarily close to one.

The second step consists of propagating these crossing estimates across scales. Using a finite-size criterion due to Aizenman, Chayes, Chayes, Fr\"ohlich, and Russo \cite{aiz83}, we show that the probability of horizontal and vertical crossings converges to one super-exponentially fast along the sequence $L_k$. By concatenating these crossings through suitable rectangles and applying the FKG inequality, we construct an infinite open path with positive probability in every environment satisfying the spacing condition above. Finally, an ergodic argument shows that translates of such ``good'' environments occur almost surely somewhere in the plane, and since the origin is connected to one of these regions with positive probability, percolation follows.

\subsubsection{Controlling the environment}\label{env_control}

Throughout this section, the symbol \bbP denotes the joint probability measure of the sequences $(\xi_k^x)_{k\in\bbZ_+}$ and $(\xi_k^y)_{k\in\bbZ_+}$, while $\bbE$ denotes expectation with respect to $\bbP$.

We begin by controlling the environment's geometry. The key property we need is that, within any relevant region, the reinforced vertices are typically not too far apart. 

Recall the definition of $\Xi$ in \eqref{env_def}.  For a given $L_0$ and $k\geq 1$, consider the sequence $L_k\equiv 2^kL_0$, and a contraction parameter $\beta>0$. Recall Definition \ref{def:k_spaced} and consider the event
\[ \Psi_{\beta} = \left\{ \text{for all } k \ge 1, \text{ the box } [0, 2L_k]^2 \text{ is doubly $L_k^{\beta}$- spaced with respect to $\Xi$} \right\}.\]

In the following, recall the notation $\mu_{\xi}=\mu_{\xi^x}\times\mu_{\xi^y}$.

\begin{proposition}[Control of the environment]\label{prop:env_control}
Assume $(\xi_k^x)_{k \in \mathbb{Z}_+}$ and $(\xi_k^y)_{k \in \mathbb{Z}_+}$ as in the statement of Theorem \ref{diluted}, and let $\beta>1/\alpha$. Then, there exists $L_0$ sufficiently large such that $$\mu_{\xi}(\Psi_{\beta})\geq 1/2.$$
\end{proposition}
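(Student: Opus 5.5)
The plan is a union bound over scales and over the two coordinate directions, using Markov's inequality with the $\alpha$-th moment. Since $\mu_\xi=\mu_{\xi^x}\times\mu_{\xi^y}$ and the box $[0,2L_k]^2$ is doubly $L_k^\beta$-spaced exactly when $[0,2L_k]$ is $L_k^\beta$-spaced with respect to both $\cC$ and $\cR$, it suffices to show that for each coordinate, say $x$,
\[
\mu_{\xi^x}\bigl(\exists k\ge1:\ [0,2L_k]\text{ is not }L_k^\beta\text{-spaced w.r.t. }\cC\bigr)\le 1/4
\]
once $L_0$ is large. The same bound for $y$ then gives $\mu_\xi(\Psi_\beta^c)\le 1/2$.

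Fix $k$. The points of $\cC\cap[0,2L_k]$ are $0=S^x_0<S^x_1<\dots<S^x_{N^x(2L_k)}$, so the consecutive gaps inside the interval are $\xi^x_1,\dots,\xi^x_{N^x(2L_k)}$. Since $\xi^x_j\ge1$ (or we simply ignore zero spacings, which only help), we have $N^x(2L_k)\le 2L_k$. Hence a violation forces $\xi^x_j>L_k^\beta$ for some $j\le 2L_k$. By a union bound and Markov's inequality,
\[
\mu_{\xi^x}\bigl([0,2L_k]\text{ not }L_k^\beta\text{-spaced}\bigr)\le 2L_k\,\bbP(\xi^x_1>L_k^\beta)\le 2\,\bbE[(\xi^x_1)^\alpha]\,L_k^{1-\alpha\beta}.
\]
If zero spacings are allowed, so that $N^x$ may be large, I would first discard repeated points. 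The gaps between distinct points are i.i.d. copies of $\xi^x_1$ conditioned on being positive, which have the same finite $\alpha$-moment up to a constant, so the same bound holds. One could also only count indices $j\le 2L_k$ among nonzero spacings.

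Since $\beta>1/\alpha$, the exponent $\gamma:=\alpha\beta-1$ is positive. With $L_k=2^kL_0$,
\[
\sum_{k\ge1}2\,\bbE[(\xi^x_1)^\alpha]\,(2^kL_0)^{-\gamma}=2\,\bbE[(\xi^x_1)^\alpha]\,L_0^{-\gamma}\,\frac{2^{-\gamma}}{1-2^{-\gamma}},
\]
which is at most $1/4$ for $L_0$ large. This completes the argument.

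The only delicate point is the bookkeeping of how many gaps fall in $[0,2L_k]$, including the possibility of zero spacings. Once the count is at most a constant times $L_k$, the geometric sum over dyadic scales does the rest. I do not expect a genuine obstacle here; the hypothesis $\alpha>\alpha_0>1$ is not even needed beyond $\alpha\beta>1$.
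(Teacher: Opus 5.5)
Your proposal is correct and follows essentially the same route as the paper. The paper phrases the first-moment step as $\bbE[Z_L(r)]=\bbP(\xi_1^x>r)\,\bbE[N^x(2L)+1]\le 3L\,\bbP(\xi_1^x>r)$, whereas you take a direct union bound over $j\le 2L_k$; from there both arguments apply Markov's inequality with the $\alpha$-th moment to get $O(L_k^{1-\alpha\beta})$, sum over the dyadic scales, and take $L_0$ large. Your treatment of zero spacings is a small extra point of care, since the paper's bound $N^x(L)\le L$ tacitly assumes $\xi^x_1\ge 1$.
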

\begin{proof}
Given $\Xi=(\cC\times\cR)$, let $L\in\mathbb{Z}_+$, and write
\begin{equation}\label{column_space}
\cA_\beta =\{[0,2L]\text{ is }L^\beta\text{- spaced with respect to }\mathcal{C}\},
\end{equation}
\begin{equation}\label{row_space}
\cB_\beta =\{[0,2L]\text{ is }L^\beta\text{- spaced with respect to }\mathcal{R}\}.
\end{equation}
Recall that $S_n^x=\sum_{k\leq n}\xi_k^x$, $S_0^x=0$, and $N^x(t)=\max\{k\ge 0:\ S^x_k\}$.  
 For $L\in\bbZ_+$ and $r>0$ define
\[
Z_L(r)=\sum_{i\ge 0}\mathbf 1\{S^x_i\le 2L,\ \xi_{i+1}^x>r\},
\]
which is an upper bound for the number of gaps larger than $r$ inside $[0,2L]$. Clearly, if $[0,2L]$ is not $r$-spaced then $Z_L(r)\ge 1$, and hence by Markov's inequality
\[
\mu_{\xi^x}([0,2L]\text{ is not $r$-spaced})
\le \bbE[Z_L(r)],
\]
where $\bbE$ denotes expectation with respect to $\bbP$. Since $S^x_{i}$ and $\xi_{i+1}^x$ are independent, we have

\begin{align*}
\bbE[Z_L(r)]
&= \sum_{i\ge 0}\bbP(S^x_i\le 2L,\ \xi^x_{i+1}>r)\\
& = \sum_{i\ge 0}\bbP(S^x_{i}\le 2L)\,\bbP(\xi^x_{i+1}>r)\\
&= \bbP(\xi^x_1>r)\,\bbE[N^x(2L)+1].
\end{align*}

By definition, we have $N^x(L)\leq L$ and hence $\bbE[N^x(2L)+1]\le 3L$. Setting $r=L^\beta$, apply the Markov inequality again to obtain
\begin{equation*}
\mu_{\xi^x}((\cA_\beta)^c)
\le \bbE[Z_L(L^\beta)]
= 3c L^{1-\alpha\beta},
\end{equation*}
for some constant $c>0$. The same argument gives an identical bound for $\mu_{\xi^y}((\cB_\beta)^c)$, implying, by independence, that
\begin{equation}\label{spaced_bound}
\mu_{\xi}(\cA_\beta\cap \cB_\beta)\geq 1-6cL^{1-\alpha\beta}.
\end{equation}

From now on, take $L=L_0$ and $L_k=2^kL_0$, $k\geq 1$. Our goal is to show that, with high probability, all boxes $[0,2L_k]^2$ are doubly $L_k^{2\beta}$ - spaced with respect to $\Xi\in\Psi_\beta$. Define the event
\[
\Psi_{\beta,k} = \{[0,2L_k]^2 \text{ is doubly } L_k^\beta\text{- spaced with respect to $\Xi$}\},
\]
observing that \eqref{spaced_bound} gives $\mu_\xi(\Psi_{\beta,0})\geq 1-6L^{1-\alpha\beta}.$ It follows that
$$\mu_\xi((\Psi_{\beta,k})^c)\leq 6cL_0^{1-\alpha\beta}2^{k(1-\alpha\beta)}.$$

Since $\beta>1/\alpha$, the exponent $1-\alpha\beta$ is negative, so the series $\sum_k\mu_{\xi}\left(\left(\Psi_{\beta,k}\right)^c\right)$ is summable. Choosing $L_0$ large enough, we obtain

\begin{equation*}
\mu_{\xi}(\Psi_{\beta}^c)
\le \sum_{k\ge 1}\mu_{\xi}\left(\left(\Psi_{\beta,k}\right)^c\right)
< \tfrac12.
\end{equation*}
\end{proof}

\subsubsection{Technical lemmas}

We start this section with a classical lemma relating an upper bound on the two-dimensional correlation length with crossing probabilities. Consider the crossing events 
$$\cH_L=\{\mbox{horizontal crossing of $[0,2L]\times [0,L]$}\}$$
and 
$$\cV_L=\{\mbox{vertical crossing of $[0,L]\times [0,2L]$}\}.$$

Denote by $\bbP_p$ the law of independent Bernoulli percolation with parameter $p$ and $p_c$ its critical threshold. For a given $\gamma > 0 \mbox{ and } p > p_c$,  the correlation length is defined by
\begin{equation*}\label{cl_def} \xi_\gamma(p) = \inf\{L\geq 1: \bbP_p(\mathcal{H}_L)\geq 1-\gamma\}. 
\end{equation*} 
A fundamental result in the scaling theory of two-dimensional percolation is that, for $p = p_c+\varepsilon$, we have
\begin{equation}\label{crit_exp}\xi_{\gamma}(p_c+\varepsilon) \leq C_\gamma\varepsilon^{-c_1}, \quad \text{as } p \downarrow p_c,
\end{equation}
for some constants $c_1> 0$ and $C_\gamma>0$; see \cite{Kesten1987}.  Hence,


\begin{lemma}
\label{prop:supercritical_crossing}
Let $c_1> 0$ be as in \eqref{crit_exp}. For every $\gamma>0$, the following holds. For every $\eps>0$ and $p = p_c + \varepsilon$,  if $L$ satisfies
\[ L > C_\gamma\varepsilon^{-c_1}, \]
then 
\[ \mathbb{P}_p(\mathcal{H}_L) > 1-\gamma. \]
\end{lemma}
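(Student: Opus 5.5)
The lemma is essentially a restatement of \eqref{crit_exp} combined with monotonicity. First, I would make the definition of $\xi_\gamma(p)$ precise. It is the infimum of those $L\ge 1$ with $\bbP_p(\cH_L)\ge 1-\gamma$. To pass from "some $L$ works" to "every $L$ beyond the threshold works", I need $L\mapsto \bbP_p(\cH_L)$ to be eventually non-decreasing, or at least bounded below once the threshold is exceeded.

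For $p>p_c$ this follows from standard supercritical theory. For $L\ge \xi_\gamma(p)$ (taken with a slightly smaller $\gamma'$), the classical finite-size/renormalization argument of Kesten \cite{Kesten1987} shows $\bbP_p(\cH_{L})\ge 1-\gamma$ for all larger $L$. Alternatively, one can fold the monotonicity into the definition via $\xi_\gamma(p)=\inf\{L_0:\bbP_p(\cH_L)\ge 1-\gamma\ \forall L\ge L_0\}$. This is the form in which \eqref{crit_exp} is proved in \cite{Kesten1987}, and I would adopt it.

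The second step is immediate. Given $\gamma$ and $\eps>0$, set $p=p_c+\eps$. By \eqref{crit_exp}, $\xi_\gamma(p)\le C_\gamma\eps^{-c_1}$, where $C_\gamma$ is enlarged if necessary so that the bound holds for all $\eps\in(0,1-p_c]$ and not only asymptotically. This is possible because for $\eps$ bounded away from $0$, the correlation length $\xi_\gamma(p_c+\eps)$ is bounded uniformly, by monotonicity in $p$. Hence any $L>C_\gamma\eps^{-c_1}$ satisfies $L>\xi_\gamma(p)$, and the (monotone) definition gives $\bbP_p(\cH_L)\ge 1-\gamma$. Replacing $\gamma$ by $\gamma/2$ throughout, with the corresponding constant, yields the strict inequality.

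The only real obstacle is the monotonicity-in-$L$ issue in the first step. I would handle it by citing the standard fact that above the correlation length, crossing probabilities of $2{:}1$ rectangles stay above $1-\gamma$, and in fact tend to $1$ exponentially in $L/\xi$. If necessary, this can be proved with the usual RSW and renormalization bootstrap: combining crossings of $2L\times L$ rectangles via FKG gives $1-\bbP_p(\cH_{2L})\le C(1-\bbP_p(\cH_L))^2$.
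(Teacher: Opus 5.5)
Your proposal is correct and follows the paper's route. The paper deduces the lemma from \eqref{crit_exp} and the definition of $\xi_\gamma(p)$ with the single word ``Hence''; your remarks on monotonicity in $L$ (obtained by passing to a smaller $\gamma'$) and on enlarging $C_\gamma$ so the bound holds uniformly over $\eps\in(0,1-p_c]$ fill in details the paper leaves implicit. If you prove the monotonicity yourself rather than cite it, note that the doubling inequality $1-\mathbb{P}_p(\cH_{2L})\le C(1-\mathbb{P}_p(\cH_L))^2$ controls only the scales $2^mL$, so you also need the elementary gluing bound $1-\mathbb{P}_p(\cH_{L'})\le C(1-\mathbb{P}_p(\cH_L))$ for $L\le L'\le 2L$ to cover intermediate $L'$.
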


The same result holds for the event $\cV_L$. The proof of the next lemma follows the same strategy as the proof of Proposition 2 in \cite{Brochette}. It is based on a differential inequality derived from Russo's formula together with a local surgery argument around the nearest reinforced site. The adaptation to the present setting of site percolation with zero-dimensional reinforcements is straightforward. Although the geometric estimates involved in the surgery lead to slightly different polynomial exponents, the overall structure of the proof, the use of RSW estimates, and the final conclusion remain unchanged.


\begin{lemma}[Comparison with homogeneous percolation]
\label{lem:comparison_homogeneous}
Let $\varepsilon\in (0,1/2)$. There exists $c_2 > 0$ such that, for $r \in \mathbb{Z}_+$ sufficiently large and every $L \ge r$, the following holds. For every environment $\Xi$ such that $[0,2L]^2$ is doubly $r$-spaced, it holds that 
$$
    \mathbb{P}_{p_c+\varepsilon,p_c}^{\Xi}(\mathcal{H}_L)
    \;\ge\;
    \mathbb{P}_{\,p_c + r^{-c_2}}(\mathcal{H}_L),
$$
and 
$$
    \mathbb{P}_{p_c+\varepsilon,p_c}^{\Xi}(\mathcal{V}_L)
    \;\ge\;
    \mathbb{P}_{\,p_c + r^{-c_2}}(\mathcal{V}_L).
$$
\end{lemma}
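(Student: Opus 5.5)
We would prove the comparison by interpolation in the parameter of the non-reinforced sites, in the spirit of Proposition 2 of \cite{Brochette}. Introduce a two-parameter family: reinforced sites (those in $\Xi$) are open with probability $p_c+\varepsilon$, all others with probability $p_c+s$, for $s\in[0,\eta]$, where $\eta=r^{-c_2}$. Alongside it, consider the homogeneous family in which every site of $[0,2L]\times[0,L]$ is open with probability $p_c+s$. We aim to show that, for the increasing event $\mathcal H_L$, the inhomogeneous probability at $s=0$ dominates the homogeneous one at $s=\eta$. Concretely, consider the path $t\mapsto \mathbb P^{(t)}$, $t\in[0,1]$, where non-reinforced sites have parameter $p_c+t\eta$ and reinforced sites have parameter $p_c+\varepsilon-t(\varepsilon-\eta)$. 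At $t=0$ this is $\mathbb P^\Xi_{p_c+\varepsilon,p_c}$, and at $t=1$ it is $\mathbb P_{p_c+\eta}$. It then suffices to show $\frac{d}{dt}\mathbb P^{(t)}(\mathcal H_L)\le 0$. By Russo's formula,
\begin{equation*}
\frac{d}{dt}\mathbb P^{(t)}(\mathcal H_L)=\eta\sum_{v\notin\Xi}\mathbb P^{(t)}(v\text{ pivotal})-(\varepsilon-\eta)\sum_{v\in\Xi}\mathbb P^{(t)}(v\text{ pivotal}),
\end{equation*}
so we must show that the total pivotality of non-reinforced sites is at most $\frac{\varepsilon-\eta}{\eta}$ times that of reinforced sites.

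The key step is a local surgery assigning each non-reinforced site $v$ to a reinforced site $w(v)\in\Xi$ with $|v-w(v)|_\infty\le r$. Such a $w(v)$ exists because $[0,2L]^2$ is doubly $r$-spaced: the selected rows and columns inside the box have consecutive gaps at most $r$. (Near the boundary, where $0\in\mathcal C\cap\mathcal R$ and $L\ge r$, one uses the first and last intersection points.) We would show
\begin{equation*}
\mathbb P^{(t)}(v\text{ pivotal})\le C r^{C}\,\mathbb P^{(t)}(w(v)\text{ pivotal}).
\end{equation*}
On the pivotality event for $v$ there are four alternating open and closed arms from $v$ to distance $r$. Using RSW at near-critical parameters (valid uniformly since $L\le$ correlation length is not needed; all parameters are at least $p_c$ and at most $p_c+\varepsilon<1$, and on scale $r$ crossing probabilities are bounded below for non-reinforced sites), we extend these arms within the annulus $B_{2r}(v)\setminus B_r(v)$. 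We then reroute them inside $B_{2r}(v)$ so that they meet at $w(v)$ instead. This is done by prescribing the configuration in $B_{2r}(v)$, which costs at most a factor $c^{r^2}$ in the crude approach.

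That crude factor is too weak, and the refinement of this surgery is the main obstacle. Following \cite{Brochette}, we would use quasi-multiplicativity of four-arm events and a separation-of-arms lemma, so that the rerouting costs only a polynomial factor $r^{C}$: the four-arm probability from scale $1$ to $r$ is at least $r^{-C}$. Each $w\in\Xi$ is assigned at most $(2r+1)^2$ sites $v$. Hence the non-reinforced pivotal sum is at most $C r^{C+2}$ times the reinforced one, and the derivative is nonpositive once $\eta C r^{C+2}\le \varepsilon-\eta$. This holds with $c_2=C+3$ for $r$ large, since $\varepsilon$ is fixed.

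The vertical event $\mathcal V_L$ is handled identically, by symmetry.
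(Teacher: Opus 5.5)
Your proposal is essentially the argument the paper relies on. The paper gives no self-contained proof and defers to the Russo-formula differential inequality with local surgery around the nearest reinforced site from Proposition 2 of \cite{Brochette}; your interpolation $t\mapsto\mathbb P^{(t)}$, the pivotal comparison $\mathbb P^{(t)}(v\text{ pivotal})\le Cr^{C}\,\mathbb P^{(t)}(w(v)\text{ pivotal})$, and the count of at most $(2r+1)^2$ sites per reinforced site are exactly that scheme.

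One justification should be tightened. Closed-arm RSW, arm separation and quasi-multiplicativity can only be invoked at scales $\asymp r$ around $v$. There, only $O(1)$ sites are reinforced and all others are within $r^{-c_2}$ of $p_c$, which for $c_2>2$ keeps the measure comparable to critical on that scale. At scales far beyond $r^{O(1)}$, $\mathbb P^{(t)}$ is genuinely supercritical and dual crossings become unlikely, so ``all parameters lie in $[p_c,p_c+\varepsilon]$'' does not by itself supply these estimates.
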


\begin{figure}[ht]
    \centering
    \input rspacedbox_tikz.tex
    \caption{$r$-spaced boxes.}
    \label{fig:rspacedbox}
\end{figure}

The last ingredient in our proof is a classical finite-size criterion due to Aizenman, Chayes, Chayes, Fröhlich, and Russo \cite{aiz83}. One of its consequences is the characterization of the supercritical phase of two-dimensional percolation in terms of crossing probabilities of large squares; see \cite{cha96}. Although originally formulated for bond percolation, the argument relies only on the FKG inequality, RSW estimates, and standard constructions based on overlapping crossings, and therefore applies equally to Bernoulli site percolation on $\bbZ_+^2$; see Comment 1 in the Introduction of \cite{KST}.

\begin{lemma}\label{lem:accfr}
Consider independent Bernoulli percolation on $\mathbb{Z}_+^2$ under the measure $\mathbb{P}_p$ and let $\sigma< 1$.  
If
\[
    \mathbb{P}_{p}(\mathcal{H}_L) \ge 1 - \tfrac{1}{16}\sigma,
\]
then for every $m \ge 1$,
\[
    \mathbb{P}_{p}(\mathcal{H}_{2^m L}) \ge 1 - \tfrac{1}{16}\sigma^{2^m}.
\]
\end{lemma}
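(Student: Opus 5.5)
The plan is the classical renormalization argument of Aizenman, Chayes, Chayes, Fr\"ohlich and Russo: express a crossing of the doubled rectangle in terms of crossings at the original scale, and obtain squaring of the failure probability from independence of disjoint regions.

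\emph{Geometric step.} The rectangle $[0,4L]\times[0,2L]$ (the one relevant for $\mathcal H_{2L}$) is tiled by translates of $[0,2L]\times[0,L]$ and $[0,L]\times[0,2L]$. Concretely, take two horizontal $L$-crossings of the overlapping rectangles $[0,2L]\times[0,L]$ and $[2L,4L]\times[0,L]$, joined by vertical crossings of the squares $[L,2L]\times[0,2L]$ and $[2L,3L]\times[0,2L]$. By planarity these force a horizontal crossing of the lower half-strip $[0,4L]\times[0,L]$. More simply, I will use the fact that a horizontal crossing of $[0,4L]\times[0,L]$ is produced from horizontal crossings of $[0,2L]\times[0,L]$, $[L,3L]\times[0,L]$ and $[2L,4L]\times[0,L]$ together with vertical crossings of $[L,2L]\times[0,L]$ and $[2L,3L]\times[0,L]$. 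The last two are implied by the complementary events, through the standard duality between horizontal and vertical crossings of squares in site percolation on $\bbZ^2$, combined with RSW and symmetry. This is the step I expect to need the most care. The cleanest route is to phrase everything through the function $f(L)=1-\mathbb P_p(\mathcal H_L)$ and use that, by symmetry, $\mathbb P_p(\mathcal V_L)=\mathbb P_p(\mathcal H_L)$.

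\emph{Square of the failure probability.} The rectangle $[0,4L]\times[0,2L]$ contains the two disjoint strips $[0,4L]\times[0,L]$ and $[0,4L]\times[L+1,2L]$. If $\mathcal H_{2L}$ fails, neither strip is crossed horizontally. These two events depend on disjoint sets of sites, so by independence
\[
1-\mathbb P_p(\mathcal H_{2L})\le \big(1-\mathbb P_p(\text{horizontal crossing of }[0,4L]\times[0,L])\big)^2 .
\]
By the union bound over the five constituent crossings of the previous step,
\[
1-\mathbb P_p(\text{horizontal crossing of }[0,4L]\times[0,L])\le 5\big(1-\mathbb P_p(\mathcal H_L)\big),
\]
using $\mathbb P_p(\mathcal V_L)\ge\mathbb P_p(\mathcal H_L)$ for the vertical pieces. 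Here a vertical crossing of an $L\times L$ square is implied by a vertical crossing of the longer rectangle, which by rotation has the same probability as $\mathcal H_L$. Combining the two bounds gives
\[
f(2L)\le 25 f(L)^2,
\]
and I will adjust the bookkeeping so the constant is at most $16$. Since $\mathbb P_p$ is invariant under swapping coordinates, four pieces suffice when the middle overlap is organised suitably, which gives $16 f(L)^2$.

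\emph{Induction.} Set $g(L)=16 f(L)$. The recursion $f(2L)\le 16 f(L)^2$ becomes $g(2L)\le g(L)^2$. The hypothesis $f(L)\le \sigma/16$ means $g(L)\le\sigma$, and induction on $m$ gives $g(2^mL)\le \sigma^{2^m}$, that is,
\[
\mathbb P_p(\mathcal H_{2^mL})\ge 1-\tfrac1{16}\sigma^{2^m}.
\]

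The main obstacle is obtaining the constant $16$ exactly. If the combinatorics only gives a larger constant $K$, the same argument proves the statement with $\frac1K$ in place of $\frac1{16}$. I would then check which form is actually needed downstream; presumably only super-exponential convergence matters there.
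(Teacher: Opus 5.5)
The paper does not prove this lemma itself; it only cites the finite-size criterion of Aizenman, Chayes, Chayes, Fr\"ohlich and Russo. Your second construction is that classical argument: long crossings of $[0,2L]\times[0,L]$, $[L,3L]\times[0,L]$ and $[2L,4L]\times[0,L]$, glued by vertical crossings of the two overlap squares (each implied by a translate of $\mathcal V_L$), followed by independence of two disjoint strips. It correctly gives $f(2L)\le 25f(L)^2$, i.e.\ the lemma with $\tfrac1{25}$ in place of $\tfrac1{16}$.

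The gap is the step from $25$ to $16$. The sentence ``four pieces suffice when the middle overlap is organised suitably'' is not backed by any construction. Invariance under swapping coordinates cannot remove an event either, since you already used it to get $\mathbb P_p(\mathcal V_L)=\mathbb P_p(\mathcal H_L)$. In your scheme none of the five events can be dropped:
\begin{itemize}
\item to span length $4L$ with $2L$-long pieces whose overlaps have width $L$ (so that a transversal crossing is controlled by $f(L)$), you need three pieces;
\item each overlap needs its own vertical crossing, because two horizontal crossings of one square need not meet.
\end{itemize}
Nor does the $25$-recursion give the statement as written. From $f(L)\le\sigma/16$ you only get $25f(L)\le\tfrac{25}{16}\sigma$, which is $\ge 1$ once $\sigma\ge 16/25$. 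Even when it is $<1$, you conclude $f(2^mL)\le\tfrac1{25}\bigl(\tfrac{25}{16}\sigma\bigr)^{2^m}$, which is weaker than $\tfrac1{16}\sigma^{2^m}$. Reaching the constant $\tfrac1{16}$ would need a genuinely new idea, not bookkeeping.

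As you suspected, this is harmless for the paper. The lemma is used only in the proof of Theorem~\ref{diluted}, and taking $\gamma=\tfrac1{25}\sigma$ there instead of $\tfrac1{16}\sigma$ changes nothing. The honest fix is to state the lemma with $\tfrac1{25}$.

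There are also some smaller points.
\begin{itemize}
\item With the paper's closed boxes, $[0,4L]\times[0,2L]$ has $2L+1$ rows, so it does not contain two disjoint translates of $[0,4L]\times[0,L]$. Your upper strip $[0,4L]\times[L+1,2L]$ is one row thinner, so its horizontal crossing is \emph{less} likely, and your displayed squaring bound does not follow as written. Redefining $\mathcal H_L$ with half-open boxes $[0,2L)\times[0,L)$ fixes this: the doubled box then splits exactly into two translates of the strip, and the five-piece construction goes through verbatim.
\item Neither duality nor RSW is needed (site percolation on $\mathbb Z^2$ is not self-matching anyway). The square crossings follow from $\mathcal V_L$ by inclusion.
\item Your first construction, with pieces $[0,2L]$ and $[2L,4L]$ that do not overlap and vertical crossings of height $2L$, forces no connection and leaves the lower strip. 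You are right to discard it.
\end{itemize}
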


The same result holds for the event $\cV_L$.

\subsubsection{Building the cluster}\label{build_cluster}

\noindent\textit{Proof of Theorem \ref{diluted}:} Consider the constants $c_1$ from \eqref{crit_exp} and $c_2$ from Lemma \ref{lem:comparison_homogeneous}, and let $\alpha> c_1c_2$ be fixed as in the statement of the theorem. Let $\beta$ be such that
\begin{equation}\label{beta}\frac{1}{\alpha}<\beta<\frac{1}{c_1 c_2}.\end{equation}

Define $L_k=2^kL_0$, where $L_0$ is chosen large enough so that Proposition \ref{prop:env_control} holds. For any environment $\Xi\in\Psi_{\beta}$, we have that for every $k\geq 1$ the box $[0,2L_k]^2$ is $L_k^{\beta}$-spaced.
Let us show that, for every $k\geq 1$, the probability of the event $\mathcal{H}_{L_k}$ increases to one exponentially fast in $L_k$. 

Let $\varepsilon>0$ and fix $\Xi\in\Psi_{\beta}$. In this case, Lemma \ref{lem:comparison_homogeneous} gives
\begin{equation}\label{env_bound}
\mathbb{P}_{p_c+\varepsilon,p_c}^{\Xi}(\mathcal{H}_{L_k}) \ge \mathbb{P}_{p_c + \varepsilon_k}(\mathcal{H}_{L_k}), \quad \quad \mbox{for all $k\geq 0$,}
\end{equation}
with $\varepsilon_{k}= L_{k}^{-\beta c_2}$. 

\begin{figure}[ht]
    \centering
    \input cross_tikz_2.tex
    \caption{Boxes at scales $L_k$ and $L^*_k$.}
    \label{fig:cross}
\end{figure}

In the following, we slightly modify the scales $L_k$ to $L_k^*$ so that successive occurrences of $\mathcal{H}_{L_k^*}, \mathcal{V}_{2L_k^*}, \dots$ imply the occurrence of $\mathcal{H}_{L_k}$. Given $\sigma<1$, let $\gamma=\tfrac{1}{16}\sigma$ and define $\zeta=\beta c_1c_2<1$. Choose an integer $\ell_\gamma$ such that $2^{\ell_\gamma}>C_\gamma L_0^{\zeta-1}$. Let $m_k=\lfloor(1-\zeta)k\rfloor$, and write  $L^*_k=L_k2^{\ell_{\gamma}-m_k}$. Pick $k_0$ sufficiently large so that $0<m_k-\ell_\gamma<k$ for all $k\geq k_0$. Hence, $L^*_k$ is an integer for all $k\geq k_0$ and we have that
\begin{equation}\label{Lstar}
L_k^*=L_k2^{\ell_\gamma-m_k}\geq C_\gamma L_0^{\zeta-1}L_k 2^{-(1-\zeta)k}=C_\gamma L_k^\zeta>\xi_\gamma(p_c+\eps_k),\quad \quad \mbox{for all }k\geq k_0,
\end{equation}
where the last inequality in \eqref{Lstar} follows from \eqref{crit_exp}. By Lemma \ref{prop:supercritical_crossing}, we obtain 
$$\mathbb{P}_{p_c + \varepsilon_k}(\mathcal{H}_{L^*_k})>1-\gamma=1-\tfrac{1}{16}\sigma, \quad \quad \mbox{for all }k\geq k_0.$$  

Therefore, Lemma \ref{lem:accfr} and \eqref{env_bound} yield
\[
\mathbb{P}_{p_c+\varepsilon,p_c}^{\Xi}(\mathcal{H}_{L_k})\geq\mathbb{P}_{p_c + \varepsilon_k}(\mathcal{H}_{L_k})>1-\tfrac{1}{16}\sigma^{2^{(m_k-\ell_\gamma)}}>1-\tfrac{1}{16}\sigma^{2^{(1-\zeta)k-\ell_\gamma-1}}, \quad \quad \mbox{for all }k\geq k_0.
\]

The same argument yields an identical bound for $\mathbb{P}_{p_c+\varepsilon,p_c}^{\Xi}(\mathcal{V}_{L_k})$. Finally, we have the following inclusion of events (see Figure \ref{fig:infclus})
\[
\{o\longleftrightarrow\infty\}\supset\Bigg\{\bigcap_{n=k_0}^{\infty}\cH_{L_{n}}\Bigg\}\cap\Bigg\{\bigcap_{n=k_0}^{\infty}\cV_{L_{n+1}}\Bigg\}\cap\Bigg\{\bigcap_{i=0}^{L_{k_0}}\{(0,i)\mbox{ is open}\}\Bigg\},
\]
and an application of the FKG inequality gives
$$\mathbb{P}^{\Xi}_{p_c+\varepsilon,p_c}(o\longleftrightarrow \infty)\geq p_c^{L_{k_0}+1}\prod_{k\geq k_0}\left(1-\tfrac{1}{16}\sigma^{2^{(1-\zeta)k-\ell_\gamma-1}}\right)>0,$$ for every $\Xi\in\Psi_\beta$.

\begin{figure}[ht]
    \centering
    \input infclus_tikz.tex
    \caption{Constructing the infinite cluster.}
    \label{fig:infclus}
\end{figure}

To complete the proof, note that the sequence $(\xi^x_j,\xi^y_j)_{j\in\mathbb{Z}_+}$ is $i.i.d$, and that the events 
$$\Psi_{\beta}^j=\left\{\mbox{for all } k\geq 1, \mbox{the box } (\sum_{i=1}^j\xi^x_i,\sum_{i=1}^j\xi^y_i)+ [0,2L_k]^2 \mbox{ is doubly } L_k^{\beta}-\mbox{ spaced with respect to } \Xi\right\}$$ are translation invariant in $j$. Therefore, by ergodicity, the event $\Psi_{\beta}^j$ occurs almost surely for some $j$. By the previous argument, the point $(\sum_{i=1}^j\xi^x_i,\sum_{i=1}^j\xi^y_i)$ then percolates with positive probability. The proof follows since the origin is connected to this point with positive probability.

\qed

\subsection{Proof of Theorem \ref{alpha_noperc}}

In this section, we prove Theorem~\ref{alpha_noperc}. The argument follows the classical strategy used to rule out percolation via the occurrence of infinitely many closed barriers surrounding the origin. In the quarter-plane setting, the role of such circuits is played by closed paths in the matching lattice connecting the coordinate axes and separating the origin from infinity.

\bigskip
\noindent\textit{Proof of Theorem \ref{alpha_noperc}:} Recall the notation  $S_n^x=\sum_{k\leq n}\xi_k^x$, $S_0^x=0$, so that $\mathcal{C}=\{S_n^x\ ;\ n\geq 0\}$. For $t>0$, recall \eqref{max_arrival} and write 
\begin{equation}\label{excess_time}
R^x_t= S^x_{N^x(t)+1}-t
\end{equation}
for the \textbf{excess time}.

For $t\geq 1$, consider the quarter-annulus $A_t=[0,2t]^2\setminus[0,t]^2$ and define the event 
\begin{equation*}
    \cF_t=\{\mathcal{
    C}\cap(t,2t]=\emptyset\}\cap\{\mathcal{R}\cap(t,2t]=\emptyset\}.
\end{equation*}

We say that $A_t$ is \textbf{free of reinforcements}  if $\cF_t$ occurs. The key step is to show that such reinforcement-free quarter-annuli occur infinitely often with probability one. We begin with the following estimate.

\begin{claim}\label{claim_1}
It holds that 
\begin{equation}\label{estimate}\limsup_{t\to\infty}\mu_\xi(\cF_t)>0,
\end{equation}
implying that the event $\cF=\limsup \cF_t$ satisfies \begin{equation}\label{estimate_2}
\mu_\xi(\cF)>0.
\end{equation}
\end{claim}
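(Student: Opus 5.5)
We will use the independence of $\cC$ and $\cR$ under $\mu_\xi=\mu_{\xi^x}\times\mu_{\xi^y}$ together with $\xi_1^x\stackrel{d}{=}\xi_1^y$ to write
$$\mu_\xi(\cF_t)=\mu_{\xi^x}\big(\cC\cap(t,2t]=\emptyset\big)^2 .$$
In terms of the excess time \eqref{excess_time}, the event $\{\cC\cap(t,2t]=\emptyset\}$ is exactly $\{R^x_t\geq t\}$. So \eqref{estimate} reduces to showing that
$$\limsup_{t\to\infty}\bbP(R^x_t\geq t)>0$$
when $\bbE[(\xi_1^x)^\alpha]=\infty$ for some $\alpha\in(0,1)$.

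\textbf{Step 1: renewal lower bound.} Decompose according to the renewal point immediately preceding $t$:
$$\bbP(R^x_t\geq t)=\sum_{j\le t} u(j)\,\bbP(\xi_1^x\geq 2t-j)\;\geq\; \bbP(\xi_1^x\geq 2t)\,U(t),$$
where $u(j)=\bbP(j\in\cC)$ and $U(t)=\bbE[N^x(t)+1]$ is the renewal function.

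\textbf{Step 2: bound on the renewal function.} We use the standard estimate
$$U(t)\geq \frac{c\,t}{\bbE[\xi_1\wedge t]}=\frac{c\,t}{m(t)}, \qquad m(t)=\sum_{s<t}\bbP(\xi_1>s).$$
This can be obtained from Wald's identity applied to the truncated variables $\xi_k\wedge t$ at the stopping time $N(t)+1$. Combining, it suffices to find arbitrarily large $t$ with
$$t\,\bbP(\xi_1>2t)\geq c'\,m(t).$$

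\textbf{Step 3: contradiction argument (main obstacle).} This step must genuinely use the divergence of the $\alpha$-moment with $\alpha<1$. Write $G(s)=\bbP(\xi_1>s)$ and argue by contradiction. Suppose that for every $c'>0$ we have $t\,G(2t)\le c'\,m(t)$ for all large $t$. Then $m(2t)\le m(t)+t\,G(t)$, which yields a doubling inequality $m(2t)\le (1+c'')\,m(t)$ along dyadic scales. Choosing $c''$ small enough that $\log_2(1+c'')<1-\alpha$, this gives the polynomial growth bound
$$m(t)\le C\,t^{\gamma}\quad\text{for some }\gamma<1-\alpha.$$
Consequently $t\,G(2t)\le C\,t^{\gamma}$, that is, $G(t)\le C\,t^{\gamma-1}$ with $1-\gamma>\alpha$. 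This implies $\bbE[\xi_1^\alpha]<\infty$, a contradiction. Hence $\limsup_t \bbP(R_t\geq t)>0$, and therefore \eqref{estimate} holds. The delicate point is making the dyadic bookkeeping precise: we need $m(2t)-m(t)\le t\,G(t)$, and we need to compare $G(t)$ with $G(2t)$. We will handle the latter by applying the assumed inequality at $t/2$.

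\textbf{Step 4: passing to $\cF=\limsup\cF_t$.} By reverse Fatou,
$$\mu_\xi(\limsup\cF_t)\geq\limsup\mu_\xi(\cF_t)>0.$$
Here we interpret $\limsup\cF_t$ along a sequence $t_n\to\infty$ realizing the $\limsup$; we may pass to a subsequence to ensure this. This gives \eqref{estimate_2}.
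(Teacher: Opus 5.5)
Your proposal is correct and follows essentially the same route as the paper: reduce to the excess time, use $\bbP(R^x_t>t)\ge \bbP(\xi^x_1>2t)\,\bbE[N^x(t)+1]$ and $\bbE[N^x(t)+1]\gtrsim t/\bbE[\xi^x_1\wedge t]$, derive a contradiction by dyadic doubling of the truncated mean $\bbE[\xi^x_1\wedge t]$, then use $\xi^x_1\stackrel{d}{=}\xi^y_1$ to square the estimate and reverse Fatou to get $\mu_\xi(\cF)>0$. The differences are cosmetic: your Wald-identity proof of the renewal bound (via $\sum_{k\le N(t)+1}(\xi_k\wedge t)\ge t$) is a bit cleaner than the paper's union-bound/Markov argument, and the correct identity is $\{\cC\cap(t,2t]=\emptyset\}=\{R^x_t>t\}$ with a strict inequality, which changes nothing since your decomposition works verbatim with $\bbP(\xi^x_1>2t-j)$.
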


We first show how Claim~\ref{claim_1} implies the theorem. 

Let $\zeta_k=(\xi_k^x,\xi_k^y)$, and write $\zeta=(\zeta_k)_k$. Observe that the environment $\Xi$ is completely determined by $\zeta$. Moreover, any finite permutation of the sequence $(\zeta_k)$ modifies only finitely many renewal times $S_n^x$ and $S_n^y$, and therefore changes the sets $\mathcal
C$ and $\mathcal{R}$ only inside a bounded region of $\bbZ^2_+$. Consequently, the event $\cF=\limsup \cF_t$ is invariant under finite permutations of $(\zeta_k)$. Since $(\zeta_k)$ is an \textit{i.i.d.} sequence,  the Hewitt-Savage zero-one law yields $\mu_\xi(\cF)\in\{0,1\}$. As $\mu_\xi(\cF)>0$, we conclude that $\mu_\xi(\cF)=1$.

By definition, for each $\Xi\in \cF=\limsup \cF_t$, there exists an infinite sequence of quarter-annuli $A_{t_k}$, each free of reinforcements. Passing to a subsequence, if necessary, we may assume that they are pairwise disjoint, for instance by requiring that $2t_{k}<t_{k+1}$.
Hence, the restriction of $\mathbb{P}_{p_c+\varepsilon,p_c}^\Xi$ to $A_{t_k}$ coincides with critical Bernoulli site percolation. Any open path from the origin to infinity must cross each annulus $A_{t_k}$ from its inner boundary to its outer boundary. Since each $A_{t_k}$ is a quarter-annulus with fixed aspect ratio, RSW theory implies the existence of a constant $\kappa_0>0$, independent of $k$, such that each crossing is blocked by a closed path in the matching lattice with probability at least $\kappa_0$. Since the annuli $A_{t_k}$ are pairwise disjoint, the corresponding blocking events depend on disjoint sets of vertices and are therefore independent. Therefore, we conclude that
$$\mathbb{P}_{p_c+\varepsilon,p_c}^{\Xi} ( o\longleftrightarrow \infty)\leq\prod_{k=1}^{\infty}(1-\kappa_0)= 0.$$

It remains to prove Claim~\ref{claim_1}.  For $t>0$, let
\begin{equation}\label{renewal}
M^x(t)=\mathbb{E}[N^x(t)]
=\sum_{n\ge0}\mathbb{P}(S_n^x\le t),
\end{equation}
and define
\begin{equation}\label{renewal_2}
L^x(t)=\mathbb{E}(\xi_1^x\wedge t).
\end{equation}
The proof of Claim~\ref{claim_1} is divided into two steps.

\bigskip

\noindent\textit{Step 1.} We first prove that there exists a constant $c_2>0$ such that
\begin{equation}\label{renewal_eq}
M^x(t)\geq c_2\frac{t}{L^x(t)}.
\end{equation}
Indeed, observe that
$$\{S_n^x>t\}\subset \left[\bigcup_{i=1}^n\{\xi_i^x>t\}\right]\cup\left[\sum_{i=1}^n\xi_i^x\1_{\{\xi_i^x\leq t\}}>t\right].$$
Therefore, by the union bound and Markov's inequality,
\begin{equation}\label{renewal_3}
\mathbb{P}(S_n^x>t)
\le
n\mathbb{P}(\xi_1^x>t)
+\frac{n\mathbb{E}\left[\xi_1^x\mathbf{1}_{\{\xi_1^x\le t\}}\right]}{t}.
\end{equation}
Since 
$$L^x(t)=t\mathbb{P}(\xi_1^x> t)+\mathbb{E}\left[\xi_1^x\mathbf{1}_{\{\xi_1^x\le t\}}\right],$$
we obtain
$$\bbP(S_n^x>t)\leq \frac{2nL^x(t)}{t}.$$
Consequently,
$$\bbP(S_n^x\leq t)\geq 1/2,\mbox{ whenever }n\leq \frac{t}{4L^x(t)}.$$
Using the renewal identity \eqref{renewal}, we conclude that
$$M^x(t)\geq \sum_{0\leq n\leq \frac{t}{4L^x(t)}}\bbP(S_n^x\leq t)\geq \frac{t}{8L^x(t)},$$
which proves the desired estimate.

\bigskip

\noindent\textit{Step 2.} We prove that if $\bbE[(\xi_1^x)^\alpha]=\infty$ for some $\alpha\in(0,1)$, then 
$$\limsup_{t\rightarrow \infty}\frac{t\bbP(\xi_1^x>t)}{L^x(t)}>0.$$

Suppose, by contradiction, that
$$\lim_{t\rightarrow \infty}\frac{t\bbP(\xi_1^x>t)}{L^x(t)}=0.$$
Pick $\epsilon>0$ such that $\eta\coloneq \log_2\left(\frac{1}{1-\epsilon \log2}\right)<1-\alpha,$ and let $t_0=t_0(\epsilon)$ be such that 
\begin{equation}\label{renewal_7}
\bbP(\xi_1^x>t)\leq\frac{\epsilon\,L^x(t)}{t},\quad t\geq t_0.
\end{equation}
Consequently,
$$L^x(2t_0)-L^x(t_0)=\sum_{i=t_0}^{2t_0-1}\bbP(\xi_1^x>i)\leq \epsilon L^x(2t_0)\sum_{i=t_0}^{2t_0-1}\frac{1}{i}\leq \epsilon (\log 2) L^x(2t_0),$$
which implies
$$L^x(2t_0)\leq c_3 L^x(t_0),$$ with $c_3= \frac{1}{1-\epsilon \log2}$. Iterating this estimate yields
$$L^x(2^nt_0)\leq c_3^nL^x(t_0)=2^{n\log_2c_3}L^x(t_0)=c_4(2^nt_0)^\eta,
$$
where $c_4=t_0^{-\log_2c_3}L^x(t_0)$ and $\eta=\log_2c_3$.  Now let $t\geq t_0$, and choose $n$ such that $2^nt_0<t<2^{n+1}t_0$. Since $L$ is increasing,
$$L^x(t)\leq L^x(2^{n+1}t_0)\leq c_5t^\eta$$ with $c_5=c_42^\eta$. Hence,
$$\bbP(\xi_1^x>t)\stackrel{(\ref{renewal_7})}{\leq} \epsilon c_5t^{\eta-1},$$ for every $t\geq t_0$.

Then
$$\sum_{n\geq 1}n^{\alpha-1}\bbP(\xi_1^x\geq n)\leq \sum_{n\geq 1}n^{\alpha-1} \epsilon c_5t^{\eta-1}\leq \sum_{n\geq 1}(\epsilon c_5)n^{\alpha+\eta-2}<\infty,$$ which is equivalent to $\bbE[(\xi_1^x)^\alpha]<\infty$, contradicting the hypothesis.

We now combine the previous two steps to prove Claim~\ref{claim_1}. By Step 1, there exists $\gamma>0$ and a sequence $(r_j)_{j\in\bbN}$ with $r_j\rightarrow \infty$ such that
\begin{equation}\label{renewal_8}
\frac{r_j\bbP(\xi_1^x>r_j)}{L^x(r_j)}>\gamma.
\end{equation}

Recall \eqref{excess_time}. Let $q_j=\sum_{n\geq 1}\bbP(S^x_n=j)$ and note that
\begin{align}\label{renewal_9}
\bbP(R^x_t>t)&=\sum_{j=0}^t q_j\bbP(\xi_1^x>2t-j)\nonumber\\
&\geq \bbP(\xi_1^x>2t)\sum_{j=0}^{t}q_j\nonumber\\
&\stackrel{(\ref{renewal})}{\geq}\bbP(\xi_1^x>2t)M^x(t)\nonumber\\
&\stackrel{(\ref{renewal_eq})}{\geq} \bbP(\xi_1^x>2t)c_2\frac{t}{L^x(t)}.
\end{align}

Write $t_j=\lfloor r_j/4 \rfloor$ to obtain
$$\bbP(R^x_{t_j}>t_j)\stackrel{(\ref{renewal_9})}{\geq} \bbP(\xi_1^x>2t_j)c_2\frac{t_j}{L^x(t_j)}\geq c_6\frac{r_j \bbP(\xi_1^x>r_j)}{L^x(r_j)}\stackrel{(\ref{renewal_8})}{>}c_6\gamma>0,$$
which gives
$$\displaystyle\limsup_{t\rightarrow \infty}\bbP(R^x_t>t)>0.$$

Clearly,
\begin{equation}\label{excess_time_2}
    \{\mathcal{
    C}\cap(t,2t]=\emptyset\}=\{R^x_t>t\},
\end{equation}
since the next selected column after time $t$ lies at a distance greater than $t$. Therefore,
\begin{equation*}
\limsup_{t\to\infty}\mu_{\xi^x}(\mathcal{C}\cap(t,2t]=\emptyset)=\limsup_{t\to\infty}\bbP(R^x_t>t)>0.
\end{equation*}
The same argument applies to the excess time $R^y_t$ associated with the renewal process $(S^y_n)$, yielding 
\begin{equation*}
\limsup_{t\to\infty}\mu_{\xi^y}(\mathcal{R}\cap(t,2t]=\emptyset)>0.
\end{equation*}

Since the variables $\xi_k^x$ and $\xi_j^y$ are independent and identically distributed, it holds that 
$$\mu_\xi(\cF_t)=\mu_{\xi^x}(\mathcal{C}\cap(t,2t]=\emptyset)\mu_{\xi^y}(\mathcal{R}\cap(t,2t]=\emptyset),$$
and therefore
$$\limsup_{t\to\infty}\mu_\xi(\cF_t)>0,$$ 
completing the proof of Claim \ref{claim_1}.

\qed

\bigskip

\noindent\textit{Proof of Theorem \ref{alpha_noperc_regular}:} By the Dynkin--Lamperti theorem (see, e.g., Theorem 8.6.3 in \cite{BGT}), the normalized excess time $R_t^x/t$ converges in distribution to a generalized arcsine law. Recalling \eqref{excess_time_2}, it follows that
\begin{equation*}
\lim_{t\to\infty}\lambda_{\alpha_x}^x(\mathcal{C}\cap(t,2t]=\varnothing)=\lim_{t\to\infty}\lambda_{\alpha_x}^x(R_t^{x}/t>1)=\frac{\sin\pi\alpha_x}{\pi}\int_{1}^\infty\frac{1}{y^\alpha_x(1+y)}dy>0.
\end{equation*}

The same argument applied to $R_t^y$ yields
\begin{equation*}
\lim_{t\to\infty}\lambda_{\alpha_y}^y(\mathcal{C}\cap(t,2t]=\varnothing)=\lim_{t\to\infty}\lambda_{\alpha_y}^y(R_t^{y}/t>1)>0.
\end{equation*}
Independence of the two renewal processes then gives 
$$\lim_{t\to\infty}(\lambda^x_{\alpha_x}\times\lambda^y_{\alpha_y})(\cF_t)>0.$$ Thus the analogues of \eqref{estimate} and \eqref{estimate_2} hold, and the remainder of the proof is identical to that of Theorem~\ref{alpha_noperc}.

\section{Further developments}\label{further}

\subsection{Robustness of the reinforcement}\label{sec_dil_env}

A natural question is whether the reinforcement mechanism remains effective after additional randomness is introduced. Since the reinforced set is already sparse, one might expect that an independent thinning destroys its effect. The next theorem shows that this is not the case.

Before we state the result, let us introduce further notation. Given $\Xi$, we add a second layer of randomness as follows: choose a parameter $\phi\in[0,1]$, and for each $v\in \bbZ^2_+$, let $\cD_v\sim\mathrm{Ber(\phi)}$, independently. Write $\nu_\phi$ for the product law governing the sequence $(\cD_v)_{v\in\bbZ^2_+}$, and define 
\begin{equation}\label{diluted_env}\Xi_\phi\equiv\{v\in\Xi: \cD_v=1\}.
\end{equation}

In what follows, given any two probability measures $\mu_1$ and $\mu_2$, we use the standard notation $\mu_1\succ \mu_2$ for stochastic domination. Also, recall the notation $\mu_\xi=\mu_{\xi^x}\times \mu_{\xi^y}$. 
\begin{subtheorems}
\begin{theorem}\label{dilution_theo}   Assume that for all $\eps>0$ there exists $\delta=\delta(\eps)>0$ such that
$$\mathbb{P}_{p_c+\varepsilon,p_c-\delta}^{\Xi}(o\longleftrightarrow \infty)>0,\quad \mu_\xi \mbox{-almost every}\,\,\Xi.$$
Then, for all $\phi\in(0,1]$ and $\tilde{\eps}>0$, there exists $\tilde{\delta}=\tilde{\delta}(\tilde{\eps},\phi)>0$ such that
$$\mathbb{P}_{p_c+\tilde{\varepsilon},p_c-\tilde{\delta}}^{\Xi_\phi}(o\longleftrightarrow \infty)>0,\quad (\mu_\xi\times \nu_\phi) \mbox{-almost every}\,\,\Xi_\phi.$$
\end{theorem}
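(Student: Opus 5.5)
The plan is to \emph{anneal over the dilution variables first}, reduce to the undiluted hypothesis, and then upgrade the annealed statement to an almost-sure one by a zero-one argument.

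\emph{Step 1: averaging over the dilution.} Fix $\Xi$, $\phi\in(0,1]$, $\tilde\eps>0$, and a candidate $\tilde\delta\in(0,p_c)$. The variables $(\cD_v)$ are independent of each other and of the percolation variables. Hence, under $\int \mathbb{P}^{\Xi_\phi}_{p_c+\tilde\eps,p_c-\tilde\delta}\,d\nu_\phi$, the states $\omega(v)$ are still independent. A vertex $v\in\Xi$ is open with probability $p'=\phi(p_c+\tilde\eps)+(1-\phi)(p_c-\tilde\delta)$, and a vertex $v\notin\Xi$ is open with probability $p_c-\tilde\delta$. So the averaged measure is exactly $\mathbb{P}^{\Xi}_{p',p_c-\tilde\delta}$.

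\emph{Step 2: choosing $\tilde\delta$.} Set $\eps=\phi\tilde\eps/2$ and let $\delta=\delta(\eps)$ be given by the hypothesis. Choose
\[
0<\tilde\delta\le\min\Big\{\delta,\ \tfrac{\phi\tilde\eps}{2(1-\phi)},\ \tfrac{p_c}{2}\Big\},
\]
where the middle term is omitted if $\phi=1$. Then $p'\ge p_c+\eps$ and $p_c-\tilde\delta\ge p_c-\delta$. Monotonicity of the product measure in both parameters and the hypothesis give, for $\mu_\xi$-a.e.\ $\Xi$,
\[
\int \mathbb{P}^{\Xi_\phi}_{p_c+\tilde\eps,p_c-\tilde\delta}(o\longleftrightarrow\infty)\,d\nu_\phi
=\mathbb{P}^{\Xi}_{p',p_c-\tilde\delta}(o\longleftrightarrow\infty)>0 .
\]
Consequently, for $\mu_\xi$-a.e.\ $\Xi$, the set $G_\Xi$ of dilution configurations $\cD$ for which $\mathbb{P}^{\Xi_\phi}_{p_c+\tilde\eps,p_c-\tilde\delta}(o\longleftrightarrow\infty)>0$ has positive $\nu_\phi$-measure.

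\emph{Step 3: from positive measure to full measure.} This is the main obstacle. I would replace $o\longleftrightarrow\infty$ by the event $\{\exists$ an infinite open cluster$\}$. Define $G'_\Xi$ as the set of $\cD$ for which this event has positive probability; clearly $G_\Xi\subset G'_\Xi$.

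I claim that $G'_\Xi$ is a tail event for the independent family $(\cD_v)$. Changing $\cD$ at finitely many vertices only changes the opening probabilities at those vertices. Each such probability lies in $[p_c-\tilde\delta,1]$, and $p_c-\tilde\delta>0$. So the two percolation measures restricted to the changed sites are mutually absolutely continuous, even when $p_c+\tilde\eps=1$. Moreover, the existence of an infinite cluster does not depend on finitely many sites once we condition on the rest of the configuration. More carefully: lowering a site's parameter to a positive value preserves positivity of the probability of the event, and raising it does as well by monotonicity.

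Kolmogorov's zero-one law then gives $\nu_\phi(G'_\Xi)\in\{0,1\}$. Since $\nu_\phi(G'_\Xi)\ge\nu_\phi(G_\Xi)>0$, we get $\nu_\phi(G'_\Xi)=1$.

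\emph{Step 4: returning to the origin.} Fix $\cD\in G'_\Xi$. Some infinite cluster intersects a fixed finite box $[0,n]^2\subset\bbZ^2_+$ with positive probability. Forcing open a deterministic path inside the box from $o$ to every site of the box costs a factor at least $(p_c-\tilde\delta)^{(n+1)^2}>0$. By FKG, since both events are increasing, $\mathbb{P}^{\Xi_\phi}_{p_c+\tilde\eps,p_c-\tilde\delta}(o\longleftrightarrow\infty)>0$.

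Combining this with Fubini over $\mu_\xi$ yields the statement for $(\mu_\xi\times\nu_\phi)$-almost every $\Xi_\phi$. The delicate point to check carefully is the finite-modification invariance in Step 3 when the reinforced parameter equals $1$; it is handled by the strict positivity of $p_c-\tilde\delta$.
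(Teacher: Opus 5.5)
Your proof is correct and follows the paper's route: anneal over $(\cD_v)$ to get $\mathbb{P}^{\Xi}_{p',p_c-\tilde\delta}$ with $\eps=\phi\tilde\eps/2$, use monotonicity and the hypothesis, then apply Kolmogorov's zero-one law in $\cD$ and Tonelli. The one deviation is that you apply the zero-one law to the infinite-cluster event and then return to $o$ by finite energy and FKG, whereas the paper applies it directly to $\{o\longleftrightarrow\infty\}$ and simply asserts that this set is a tail event. Your own ``lower to a positive value, raise by monotonicity'' argument already proves that assertion (this is exactly where $p_c-\tilde\delta>0$, which you enforced, is needed), so Step 4 is optional; also drop the mutual absolute continuity claim, since Bernoulli$(1)$ and Bernoulli$(p_c-\tilde\delta)$ are not equivalent, though nothing in your argument rests on it.
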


\begin{proof}
 For a fixed environment $\Xi$ and parameters $p,q\in[0,1]$, consider the annealed measure
\begin{equation*}
    {\bbP}^{\Xi}_{p,q,\phi}(\cdot)=\int\bbP^{\Xi_\phi}_{p,q}(\cdot)d\nu_\phi.
\end{equation*}
Observe that ${\bbP}^{\Xi}_{p,q,\phi}=\bbP^{\Xi}_{\hat{p},q}$, where $\hat{p}= p\phi+(1-\phi)q$.

Given $\tilde\eps>0$, let $\varepsilon=\frac{\tilde\varepsilon\phi}{2}$ and find $\delta>0$ such that
\begin{equation*}
    \bbP^{\Xi}_{p_c+\varepsilon,p_c-\delta}(o\longleftrightarrow \infty)>0, \quad \mu_{\xi} \mbox{-almost every}\,\,\Xi.
\end{equation*}

Set $\tilde\delta=\min\{\delta,\frac{\varepsilon}{2(1-\phi)}\}$. Writing $p=p_c+\tilde\varepsilon$ and $q=p_c-\tilde\delta$, we have $p_0=p_c+\varepsilon_0$, where $\varepsilon_0=\tilde\varepsilon\phi-\tilde\delta(1-\phi)$. By the choice of $\tilde\delta$, we have $\varepsilon_0\geq\varepsilon$ and $\tilde\delta\leq\delta$. Hence, $${\bbP}^{\Xi}_{p_c+\tilde\varepsilon,p_c-\tilde\delta,\phi}=\bbP^{\Xi}_{p_c+\varepsilon_0,p_c-\tilde\delta}\succ \bbP^{\Xi}_{p_c+\varepsilon,p_c-\delta},$$  
implying that
\begin{equation*}
    \int\bbP^{\Xi_\phi}_{p_c+\tilde\varepsilon,p_c-\tilde\delta}(o\longleftrightarrow \infty)d\nu_\phi>0,\quad \mu_{\xi} \mbox{-almost every}\,\,\Xi.
\end{equation*}
Therefore, the set
\begin{equation*}
    \cL_{\Xi}=\Big\{(\mathcal{D}_v)_v\text{ : }\bbP^{\Xi_\phi}_{p_c+\tilde\varepsilon,p_c-\tilde\delta}(o\longleftrightarrow \infty)>0\Big\}
\end{equation*}
satisfies $\nu_\phi(\cL_\Xi)>0$, for $\mu_{\xi} \text{-almost every }\Xi$. Since $\cL_\Xi$ is a tail event, Kolmogorov's 0-1 Law yields $\nu_\phi(\cL_\Xi)=1$. Therefore, an application of Tonelli's theorem gives
\begin{equation*}
    \bbP^{\Xi_\phi}_{p_c+\tilde\varepsilon,p_c-\tilde\delta}(o\longleftrightarrow \infty)>0,\quad (\mu_{\xi}\times\nu_\phi) \mbox{-almost every}\,\,\Xi_\phi.
\end{equation*}
\end{proof}

Observe that Theorem~\ref{dilution_theo} holds regardless of the distribution of the environment~$\Xi$. Thus, whenever sparse reinforcement lowers the critical threshold, this effect persists under an additional independent dilution of the environment. The following theorem shows that an analogous robustness property holds in the complementary regime, where the critical threshold is not shifted.

\begin{theorem}
Assume that for all $\eps>0$ it holds that
$$\mathbb{P}_{p_c+\varepsilon,p_c}^{\Xi}(o\longleftrightarrow \infty)>0,\quad \mu_\xi \mbox{-almost every}\,\,\Xi.$$
Then, for all $\phi\in(0,1]$ and $\tilde\varepsilon>0$, we have
$$\mathbb{P}_{p_c+\tilde\varepsilon,p_c}^{\Xi_\phi}(o\longleftrightarrow \infty)>0,\quad (\mu_\xi\times \nu_\phi) \mbox{-almost every}\,\,\Xi_\phi.$$
\end{theorem}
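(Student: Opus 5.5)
We follow the proof of Theorem~\ref{dilution_theo}, now with $q=p_c$ fixed; the case $q=p_c$ is simpler because no compensation in $q$ is needed. Fix $\phi\in(0,1]$ and $\tilde\varepsilon>0$. For a fixed environment $\Xi$, introduce the annealed measure over the dilution,
\[
\bbP^{\Xi}_{p,q,\phi}(\cdot)=\int \bbP^{\Xi_\phi}_{p,q}(\cdot)\,d\nu_\phi .
\]
The variables $\cD_v$ are independent across sites and independent of $\omega$, so under this measure the sites remain independent. A site $v\in\Xi$ is open with probability $\phi p+(1-\phi)q$, and a site $v\notin\Xi$ is open with probability $q$. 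Hence
\[
\bbP^{\Xi}_{p_c+\tilde\varepsilon,p_c,\phi}=\bbP^{\Xi}_{p_c+\phi\tilde\varepsilon,\,p_c}.
\]
Set $\varepsilon=\phi\tilde\varepsilon>0$. The hypothesis then gives
\[
\int \bbP^{\Xi_\phi}_{p_c+\tilde\varepsilon,p_c}(o\longleftrightarrow\infty)\,d\nu_\phi>0 \quad\text{for }\mu_\xi\text{-a.e. }\Xi .
\]
No stochastic domination step is needed here; the identity is exact.

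Next we upgrade this to a $\nu_\phi$-almost sure statement. Consider the set
\[
\cL_\Xi=\{(\cD_v)_v:\ \bbP^{\Xi_\phi}_{p_c+\tilde\varepsilon,p_c}(o\longleftrightarrow\infty)>0\}.
\]
The previous display shows $\nu_\phi(\cL_\Xi)>0$. We claim $\cL_\Xi$ is a tail event for the independent family $(\cD_v)_v$, so Kolmogorov's 0-1 law yields $\nu_\phi(\cL_\Xi)=1$.

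Proving the tail property is the main point requiring care. Changing finitely many $\cD_v$ moves finitely many sites between the values $p_c+\tilde\varepsilon$ and $p_c$. Since both values lie in $(0,1)$, the two product measures are mutually absolutely continuous on the finitely many affected coordinates, and equivalent overall. Therefore the positivity of $\bbP^{\Xi_\phi}(o\longleftrightarrow\infty)$ is unchanged.

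If one prefers to avoid the case $p_c+\tilde\varepsilon=1$, use the following alternative. Under finite modification, $\{o\longleftrightarrow\infty\}$ has positive probability if and only if some infinite cluster exists with positive probability. Indeed, by finite energy at the sites with parameter $p_c>0$, any finite region can be opened at positive cost. This again shows that positivity is preserved.

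Finally, the set of pairs $(\Xi,\cD)$ for which positivity holds is measurable, since $\bbP^{\Xi_\phi}(o\longleftrightarrow\infty)$ is a measurable function of $(\Xi,\cD)$. Tonelli's theorem then gives
\[
(\mu_\xi\times\nu_\phi)\text{-almost sure percolation},
\]
which is the claim.
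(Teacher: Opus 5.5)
Your proof is correct and follows the paper's argument, which is simply the proof of Theorem~\ref{dilution_theo} with $\delta=0$: average over the dilution to obtain $\mathbb{P}^{\Xi}_{p_c+\phi\tilde\varepsilon,\,p_c}$, apply the hypothesis, then pass from positive $\nu_\phi$-measure to full measure via Kolmogorov's 0-1 law, and conclude with Tonelli. The differences are minor. You use the exact identity with $\varepsilon=\phi\tilde\varepsilon$ instead of the paper's choice $\varepsilon=\tilde\varepsilon\phi/2$ followed by stochastic domination. You also justify the tail property of $\cL_\Xi$ (mutual absolute continuity under finite modification, or finite energy with FKG when $p_c+\tilde\varepsilon=1$), which the paper only asserts.
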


\begin{proof} Set $\delta=0$ in the proof of Theorem \ref{dilution_theo}.
\end{proof}

\end{subtheorems}

\subsection{Final discussion}

The results obtained in this paper leave open the precise relation between the geometry of the reinforcement and the resulting phase transition. At present, our methods identify two opposite regimes. On the one hand, sufficiently high moments of the spacing distributions guarantee percolation at the critical value $q=p_c$. On the other hand, if a fractional moment of order $\alpha<1$ is infinite, then percolation does not occur at $q=p_c$. Whether these conditions are close to optimal remains completely open.

A major obstacle to improving these results is the apparent lack of monotonicity with respect to the law of the spacing variables. Unlike the percolation parameters $p$ and $q$, modifying the distribution of the spacings changes the geometry of the environment itself, and no natural stochastic ordering is known that is compatible with connectivity events. Such a principle would likely lead to a much sharper understanding of the phase diagram and could potentially identify an optimal criterion separating the percolative and non-percolative regimes.

The renewal structure plays a fundamental role throughout this work by providing independence and explicit control over the geometry of the reinforced set. It is therefore natural to ask to what extent our results depend on this particular choice of environment.

One possibility is to replace the renewal processes by more general stationary point processes. For instance, one could generate the reinforced rows and columns from stationary ergodic point processes, allowing long-range correlations between successive spacings. Such models would no longer possess the renewal property used repeatedly in our proofs, but they would provide a considerably broader framework in which to investigate how the geometry of the reinforcement influences connectivity. An interesting question is which qualitative features of the point process---such as mixing properties, clustering, or the distribution of large gaps---govern the phase transition.

Finally, one may ask whether the assumption that the environment is given by independent renewal processes is essential at all, or whether the behavior observed here depends only on geometric characteristics of the reinforced set. Identifying such geometric criteria would provide a unified perspective encompassing both the present model and a much wider class of random environments.

\section*{Acknowledgements}  
Estevão Borel was partially supported by Fundação Coordenação de Aperfeiçoamento de Pessoal de Nível Superior (CAPES) and PPGMAT-UFMG. Rémy Sanchis was partially supported by Conselho Nacional de Desenvolvimento Científico e Tecnológico (CNPq), and Fundação de Amparo à Pesquisa do Estado de Minas Gerais (FAPEMIG), grants APQ-00868-21 and RED-00133-21. Roger Silva was partially supported by FAPEMIG, grant APQ-06547-24.

\end{document}

%% file: randomenv_tikz_2.tex
\tikzset{every picture/.style={line width=0.75pt}}

\begin{tikzpicture}[x=0.75pt,y=0.75pt,yscale=-1.2,xscale=1.2]

\draw [draw opacity=0] (213.8,61) rectangle (451.84,257.12);
\draw [gray!20] 
(213.8,61) -- (213.8,257.12)
(220.8,61) -- (220.8,257.12)
(227.8,61) -- (227.8,257.12)
(234.8,61) -- (234.8,257.12)
(241.8,61) -- (241.8,257.12)
(248.8,61) -- (248.8,257.12)
(255.8,61) -- (255.8,257.12)
(262.8,61) -- (262.8,257.12)
(269.8,61) -- (269.8,257.12)
(276.8,61) -- (276.8,257.12)
(283.8,61) -- (283.8,257.12)
(290.8,61) -- (290.8,257.12)
(297.8,61) -- (297.8,257.12)
(304.8,61) -- (304.8,257.12)
(311.8,61) -- (311.8,257.12)
(318.8,61) -- (318.8,257.12)
(325.8,61) -- (325.8,257.12)
(332.8,61) -- (332.8,257.12)
(339.8,61) -- (339.8,257.12)
(346.8,61) -- (346.8,257.12)
(353.8,61) -- (353.8,257.12)
(360.8,61) -- (360.8,257.12)
(367.8,61) -- (367.8,257.12)
(374.8,61) -- (374.8,257.12)
(381.8,61) -- (381.8,257.12)
(388.8,61) -- (388.8,257.12)
(395.8,61) -- (395.8,257.12)
(402.8,61) -- (402.8,257.12)
(409.8,61) -- (409.8,257.12)
(416.8,61) -- (416.8,257.12)
(423.8,61) -- (423.8,257.12)
(430.8,61) -- (430.8,257.12)
(437.8,61) -- (437.8,257.12)
(444.8,61) -- (444.8,257.12)
(451.8,61) -- (451.8,257.12);

\draw [gray!20]
(213.8,61) -- (451.84,61)
(213.8,68) -- (451.84,68)
(213.8,75) -- (451.84,75)
(213.8,82) -- (451.84,82)
(213.8,89) -- (451.84,89)
(213.8,96) -- (451.84,96)
(213.8,103) -- (451.84,103)
(213.8,110) -- (451.84,110)
(213.8,117) -- (451.84,117)
(213.8,124) -- (451.84,124)
(213.8,131) -- (451.84,131)
(213.8,138) -- (451.84,138)
(213.8,145) -- (451.84,145)
(213.8,152) -- (451.84,152)
(213.8,159) -- (451.84,159)
(213.8,166) -- (451.84,166)
(213.8,173) -- (451.84,173)
(213.8,180) -- (451.84,180)
(213.8,187) -- (451.84,187)
(213.8,194) -- (451.84,194)
(213.8,201) -- (451.84,201)
(213.8,208) -- (451.84,208)
(213.8,215) -- (451.84,215)
(213.8,222) -- (451.84,222)
(213.8,229) -- (451.84,229)
(213.8,236) -- (451.84,236)
(213.8,243) -- (451.84,243)
(213.8,250) -- (451.84,250)
(213.8,257) -- (451.84,257);

\draw [black,dash pattern={on 0.84pt off 2.51pt}] (367.8,61) -- (367.8,257);
\draw [black,dash pattern={on 0.84pt off 2.51pt}] (269.8,61) -- (269.8,257);
\draw [black,dash pattern={on 0.84pt off 2.51pt}] (248.8,61) -- (248.8,257);
\draw [black,dash pattern={on 0.84pt off 2.51pt}] (332.8,61) -- (332.8,257);
\draw [shift={(213.8,50.32)}, rotate=90, fill=black, draw opacity=0] (6.25,-3)--(0,0)--(6.25,3)--cycle;
\draw [black,dash pattern={on 0.84pt off 2.51pt}] (416.8,61) -- (416.8,257);

\draw [black,dash pattern={on 0.84pt off 2.51pt}] (213.8,53.32) -- (213.8,257);

\draw [black,dash pattern={on 0.84pt off 2.51pt}] (213.8,257.12) -- (467.34,257.12);
\draw [shift={(470.34,257.12)}, rotate=180, fill=black, draw opacity=0] (6.25,-3)--(0,0)--(6.25,3)--cycle;

\draw [black,dash pattern={on 0.84pt off 2.51pt}] (212.8,95) -- (451.8,95);
\draw [black,dash pattern={on 0.84pt off 2.51pt}] (212.8,166) -- (451.34,166);
\draw [black,dash pattern={on 0.84pt off 2.51pt}] (212.8,173) -- (451.8,173);
\draw [black,dash pattern={on 0.84pt off 2.51pt}] (212.8,243) -- (451.8,243);

\foreach \x/\y in {213.8/257,213.8/96,213.8/173,213.8/166,213.8/243,248.8/257,367.8/257,269.8/257,332.8/257,416.8/257,
248.8/96,269.8/96,332.3/95,367.8/96,416.8/96,
248.8/166,248.8/173,269.8/166,269.8/173,
332.8/166,332.8/173,367.8/166,367.8/173,
416.8/166,416.8/173,
248.8/243,269.8/243,332.3/243,367.8/243,416.8/243}
{
\fill[black] (\x,\y) circle (2.34);
}

\end{tikzpicture}

%% file: Mblock.tex
\begin{tikzpicture}[
  scale=0.72,
  intersection/.style={circle,fill=black,inner sep=2pt},
  nonint/.style={circle,draw=black,fill=white,thick,inner sep=1.8pt},
  pathstyle/.style={line width=2.5pt,black!55,opacity=0.55},
  selectedline/.style={dashed,gray!55,thin}
]
\begin{scope}
  \draw[selectedline] (0,-0.4) -- (0,8.4);
  \draw[selectedline] (5,-0.4) -- (5,8.4);
  \draw[selectedline] (-0.4,0) -- (5.4,0);
  \draw[selectedline] (-0.4,7) -- (5.4,7);
  \draw[pattern=horizontal lines, pattern color=black!70, rounded corners=3pt]
    (-0.35,0.65) rectangle (0.35,2.35);
  \draw[pattern=vertical lines,   pattern color=black!70, rounded corners=3pt]
    (-0.35,2.65) rectangle (0.35,4.35);
  \draw[pattern=north east lines, pattern color=black!70, rounded corners=3pt]
    (-0.35,4.65) rectangle (0.35,6.35);
  \draw[pattern=north west lines, pattern color=black!70, rounded corners=3pt]
    (0.65,6.65)  rectangle (2.35,7.35);
  \draw[pattern=dots,             pattern color=black!85, rounded corners=3pt]
    (2.65,6.65)  rectangle (4.35,7.35);
  \draw[pathstyle] (0,0) -- (0,7) -- (5,7);
  \node[intersection] at (0,0) {};
  \node[intersection] at (0,7) {};
  \node[intersection] at (5,7) {};
  \foreach \y in {1,2,3,4,5,6} \node[nonint] at (0,\y) {};
  \foreach \x in {1,2,3,4}     \node[nonint] at (\x,7) {};
  \node[below left=1pt and 2pt]  at (0,0) {\scriptsize $o$};
  \node[above left=-1pt and 1pt] at (0,7) {\scriptsize $u$};
  \node[above right=-1pt and 1pt] at (5,7) {\scriptsize $v$};
  \node at (2.5,-1.0) {$\Lambda_M$ \ ($M=2$)};
\end{scope}
\draw[->,thick,gray!75] (6.2,3.5) -- (7.8,3.5);
\node[gray!85] at (7,3.85) {\scriptsize $\Psi$};
\node[gray!85] at (7,3.15) {\scriptsize };
\begin{scope}[xshift=8.5cm]
  \draw[selectedline] (0,-0.4) -- (0,5.4);
  \draw[selectedline] (3,-0.4) -- (3,5.4);
  \draw[selectedline] (-0.4,0) -- (3.4,0);
  \draw[selectedline] (-0.4,4) -- (3.4,4);
  \draw[pattern=horizontal lines, pattern color=black!70] (0,1) circle (0.27);
  \draw[pattern=vertical lines,   pattern color=black!70] (0,2) circle (0.27);
  \draw[pattern=north east lines, pattern color=black!70] (0,3) circle (0.27);
  \draw[pattern=north west lines, pattern color=black!70] (1,4) circle (0.27);
  \draw[pattern=dots,             pattern color=black!85] (2,4) circle (0.27);
  \draw[pathstyle] (0,0) -- (0,4) -- (3,4);
  \node[intersection] at (0,0) {};
  \node[intersection] at (0,4) {};
  \node[intersection] at (3,4) {};
  \node[nonint] at (0,1) {};
  \node[nonint] at (0,2) {};
  \node[nonint] at (0,3) {};
  \node[nonint] at (1,4) {};
  \node[nonint] at (2,4) {};
  \node[below left=1pt and 2pt]  at (0,0) {\scriptsize $o$};
  \node[above left=-1pt and 1pt] at (0,4) {\scriptsize $u'$};
  \node[above right=-1pt and 1pt] at (3,4) {\scriptsize $v'$};
  \node at (1.5,-1.0) {$\Lambda$};
\end{scope}
\end{tikzpicture}

%% file: rspacedbox_tikz.tex
\tikzset{every picture/.style={line width=0.75pt}} 

\begin{tikzpicture}[x=0.75pt,y=0.75pt,yscale=-1,xscale=1,scale=1.3]

\draw [color=black  ,draw opacity=1 ] [dash pattern={on 0.84pt off 2.51pt}]  (367.8,61) -- (367.8,213.36) ;
\draw [color=black  ,draw opacity=1 ] [dash pattern={on 0.84pt off 2.51pt}]  (269.8,61) -- (269.8,213.36) ;
\draw [color=black  ,draw opacity=1 ] [dash pattern={on 0.84pt off 2.51pt}]  (248.8,61) -- (248.8,213.36) ;
\draw [color=black  ,draw opacity=1 ] [dash pattern={on 0.84pt off 2.51pt}]  (332.8,61) -- (332.8,213.36) ;
\draw [color=black  ,draw opacity=1 ] [dash pattern={on 0.84pt off 2.51pt}]  (417.2,61) -- (417.2,213.36) ;
\draw [color=black  ,draw opacity=1 ] [dash pattern={on 0.84pt off 2.51pt}]  (212.8,95) -- (451.8,95) ;
\draw [color=black  ,draw opacity=1 ] [dash pattern={on 0.84pt off 2.51pt}]  (213.8,166) -- (451.8,166) ;
\draw [color=black  ,draw opacity=1 ] [dash pattern={on 0.84pt off 2.51pt}]  (212.8,173) -- (451.8,173) ;
\draw [color=black  ,draw opacity=1 ]   (248.8,96) ;
\draw [shift={(248.8,96)}, rotate = 0] [color=black  ,draw opacity=1 ][fill=black  ,fill opacity=1 ][line width=0.75]      (0, 0) circle [x radius= 2.34, y radius= 2.34]   ;
\draw [color=black  ,draw opacity=1 ]   (269.8,96) ;
\draw [shift={(269.8,96)}, rotate = 0] [color=black  ,draw opacity=1 ][fill=black  ,fill opacity=1 ][line width=0.75]      (0, 0) circle [x radius= 2.34, y radius= 2.34]   ;
\draw [color=black  ,draw opacity=1 ]   (332.3,95) ;
\draw [shift={(332.3,95)}, rotate = 0] [color=black  ,draw opacity=1 ][fill=black  ,fill opacity=1 ][line width=0.75]      (0, 0) circle [x radius= 2.34, y radius= 2.34]   ;
\draw [color=black  ,draw opacity=1 ]   (367.8,96) ;
\draw [shift={(367.8,96)}, rotate = 0] [color=black  ,draw opacity=1 ][fill=black  ,fill opacity=1 ][line width=0.75]      (0, 0) circle [x radius= 2.34, y radius= 2.34]   ;
\draw [color=black  ,draw opacity=1 ]   (416.8,96) ;
\draw [shift={(416.8,96)}, rotate = 0] [color=black  ,draw opacity=1 ][fill=black  ,fill opacity=1 ][line width=0.75]      (0, 0) circle [x radius= 2.34, y radius= 2.34]   ;
\draw [color=black  ,draw opacity=1 ]   (248.8,166) ;
\draw [shift={(248.8,166)}, rotate = 0] [color=black  ,draw opacity=1 ][fill=black  ,fill opacity=1 ][line width=0.75]      (0, 0) circle [x radius= 2.34, y radius= 2.34]   ;
\draw [color=black  ,draw opacity=1 ]   (248.8,173) ;
\draw [shift={(248.8,173)}, rotate = 0] [color=black  ,draw opacity=1 ][fill=black  ,fill opacity=1 ][line width=0.75]      (0, 0) circle [x radius= 2.34, y radius= 2.34]   ;
\draw [color=black  ,draw opacity=1 ]   (269.8,166) ;
\draw [shift={(269.8,166)}, rotate = 0] [color=black  ,draw opacity=1 ][fill=black  ,fill opacity=1 ][line width=0.75]      (0, 0) circle [x radius= 2.34, y radius= 2.34]   ;
\draw [color=black  ,draw opacity=1 ]   (269.8,173) ;
\draw [shift={(269.8,173)}, rotate = 0] [color=black  ,draw opacity=1 ][fill=black  ,fill opacity=1 ][line width=0.75]      (0, 0) circle [x radius= 2.34, y radius= 2.34]   ;
\draw [color=black  ,draw opacity=1 ]   (332.8,166) ;
\draw [shift={(332.8,166)}, rotate = 0] [color=black  ,draw opacity=1 ][fill=black  ,fill opacity=1 ][line width=0.75]      (0, 0) circle [x radius= 2.34, y radius= 2.34]   ;
\draw [color=black  ,draw opacity=1 ]   (332.8,173) ;
\draw [shift={(332.8,173)}, rotate = 0] [color=black  ,draw opacity=1 ][fill=black  ,fill opacity=1 ][line width=0.75]      (0, 0) circle [x radius= 2.34, y radius= 2.34]   ;
\draw [color=black  ,draw opacity=1 ]   (367.8,166) ;
\draw [shift={(367.8,166)}, rotate = 0] [color=black  ,draw opacity=1 ][fill=black  ,fill opacity=1 ][line width=0.75]      (0, 0) circle [x radius= 2.34, y radius= 2.34]   ;
\draw [color=black  ,draw opacity=1 ]   (416.8,166) ;
\draw [shift={(416.8,166)}, rotate = 0] [color=black  ,draw opacity=1 ][fill=black  ,fill opacity=1 ][line width=0.75]      (0, 0) circle [x radius= 2.34, y radius= 2.34]   ;
\draw [color=black  ,draw opacity=1 ]   (367.8,173) ;
\draw [shift={(367.8,173)}, rotate = 0] [color=black  ,draw opacity=1 ][fill=black  ,fill opacity=1 ][line width=0.75]      (0, 0) circle [x radius= 2.34, y radius= 2.34]   ;
\draw [color=black  ,draw opacity=1 ]   (416.8,173) ;
\draw [shift={(416.8,173)}, rotate = 0] [color=black  ,draw opacity=1 ][fill=black  ,fill opacity=1 ][line width=0.75]      (0, 0) circle [x radius= 2.34, y radius= 2.34]   ;
\draw   (220.72,79.4) -- (441.44,79.4) -- (441.44,189.76) -- (220.72,189.76) -- cycle ;
\draw    (200.8,98.96) -- (200.8,157.76) ;
\draw [shift={(200.8,157.76)}, rotate = 270] [color={rgb, 255:red, 0; green, 0; blue, 0 }  ][line width=0.75]    (0,2.24) -- (0,-2.24)   ;
\draw [shift={(200.8,98.96)}, rotate = 270] [color={rgb, 255:red, 0; green, 0; blue, 0 }  ][line width=0.75]    (0,2.24) -- (0,-2.24)   ;
\draw    (274.4,225.4) -- (329.12,225.4) ;
\draw [shift={(329.12,225.4)}, rotate = 180] [color={rgb, 255:red, 0; green, 0; blue, 0 }  ][line width=0.75]    (0,2.24) -- (0,-2.24)   ;
\draw [shift={(274.4,225.4)}, rotate = 180] [color={rgb, 255:red, 0; green, 0; blue, 0 }  ][line width=0.75]    (0,2.24) -- (0,-2.24)   ;

\draw (168.32,121.4) node [anchor=north west][inner sep=0.75pt]    {$\leq r$};
\draw (285.82,232.9) node [anchor=north west][inner sep=0.75pt]    {$\leq r$};

\end{tikzpicture}

%% file: cross_tikz_2.tex
\tikzset{every picture/.style={line width=0.75pt}} 

\begin{tikzpicture}[x=0.75pt,y=0.75pt,yscale=-0.7,xscale=0.7,scale=1.25]

[x=0.75pt,y=0.75pt,yscale=-0.7,xscale=0.7]
\draw (70,28.5) -- (392.1,28.5) -- (392.1,189.55) -- (70,189.55) -- cycle ;

\draw (71.1,200.1) .. controls (71.1,204.77) and (73.43,207.1) .. (78.1,207.1) --
(221.6,207.1) .. controls (228.27,207.1) and (231.6,209.43) .. (231.6,214.1) ..
controls (231.6,209.43) and (234.93,207.1) .. (241.6,207.1)
-- (385.1,207.1) .. controls (389.77,207.1) and (392.1,204.77) .. (392.1,200.1) ;

\draw (403.1,28.5) .. controls (407.77,28.5) and (410.1,30.83) .. (410.1,35.5) --
(410.1,101.5) .. controls (410.1,108.17) and (412.43,111.5) .. (417.1,111.5) ..
controls (412.43,111.5) and (410.1,114.83) .. (410.1,121.5)
-- (410.1,182.55) .. controls (410.1,187.22) and (407.77,189.55) .. (403.1,189.55) ;

\draw [color={rgb,255:red,100; green,100; blue,100}, draw opacity=1,
fill={rgb,255:red,120; green,120; blue,120}, fill opacity=0.2]
(70,162.1) -- (124.9,162.1) -- (124.9,189.55) -- (70,189.55) -- cycle ;

\draw [color={rgb,255:red,100; green,100; blue,100}, draw opacity=1]
(71,179.2) .. controls (86.1,162.1) and (108.1,194.6) .. (124.6,173.6) ;

\draw (73,154.6) .. controls (73,152.6) and (74.2,151.4) .. (76.2,151.4) --
(94.5,151.4) .. controls (97,151.4) and (98.2,150.2) .. (98.2,148.2) ..
controls (98.2,150.2) and (99.4,151.4) .. (101.9,151.4)
-- (120.2,151.4) .. controls (122.2,151.4) and (123.4,152.6) .. (123.4,154.6) ;

\draw (130,164.3) .. controls (132,164.3) and (133.2,165.5) .. (133.2,167.5) --
(133.2,173.8) .. controls (133.2,176.3) and (134.4,177.5) .. (136.4,177.5) ..
controls (134.4,177.5) and (133.2,178.7) .. (133.2,181.2)
-- (133.2,187.35) .. controls (133.2,189.35) and (132,190.55) .. (130,190.55) ;

\draw (226.5,226.1) node {$2L_k$};
\draw (435,110) node {$L_k$};

\draw (97,138) node {$2L^{*}_k$};
\draw (148,178) node {$L^{*}_k$};

\end{tikzpicture}

%% file: infclus_tikz.tex
\tikzset{every picture/.style={line width=0.75pt}} 

\begin{tikzpicture}[x=0.75pt,y=0.75pt,yscale=-1.2,xscale=1.2,scale=1.5]

\draw   (227.23,221.37) -- (240.46,221.37) -- (240.46,227.99) -- (227.23,227.99) -- cycle ;
\draw   (227.23,227.99) -- (227.23,201.46) -- (240.5,201.46) -- (240.5,227.99) -- cycle ;
\draw    (227.17,224.78) .. controls (237.27,219.73) and (233.55,229.62) .. (240.27,225.02) ;
\draw    (230.16,228) .. controls (240.27,222.95) and (226.23,204.89) .. (230.37,201.67) ;
\draw   (227.23,201.46) -- (280.29,201.46) -- (280.29,227.99) -- (227.23,227.99) -- cycle ;
\draw    (227.06,215.49) .. controls (247.16,190.12) and (265.76,236.91) .. (280.29,216.04) ;
\draw   (227.23,227.99) -- (227.23,121.86) -- (280.29,121.86) -- (280.29,227.99) -- cycle ;
\draw    (254.91,227.95) .. controls (297.59,216.01) and (212.07,139.12) .. (255.6,121.67) ;
\draw  [dash pattern={on 0.84pt off 2.51pt}]  (307.25,227.99) -- (334.2,227.99) ;
\draw    (280.29,227.99) -- (307.25,227.99) ;
\draw    (280.29,121.86) -- (307.25,121.86) ;
\draw  [dash pattern={on 0.84pt off 2.51pt}]  (227.23,99.76) -- (227.23,77.3) ;
\draw    (227.23,121.86) -- (227.23,99.76) ;
\draw  [dash pattern={on 0.84pt off 2.51pt}]  (307.25,121.86) -- (334.2,121.86) ;
\draw    (227.41,165.73) .. controls (331.3,137.3) and (259.3,197.8) .. (329.8,173.8) ;
\draw  [dash pattern={on 0.84pt off 2.51pt}]  (329.8,173.8) -- (344.3,168.3) ;

\end{tikzpicture}